\newtheorem{theorem}{Theorem}
\newtheorem{remark}[theorem]{Remark}
\newtheorem{lemma}[theorem]{Lemma}
\newtheorem{proposition}[theorem]{Proposition}
\newtheorem{example}[theorem]{Example}
\DeclareMathOperator*{\Ss}{S}
\DeclareMathOperator*{\dx}{d\textit{x}}
\DeclareMathOperator*{\ds}{d\textit{s}}
\newcommand{\N}{\mathbb{N}}
\newcommand{\R}{\mathbb{R}}
\newcommand{\Lp}[1]{L^{#1}(\Omega)}
\newcommand{\Lprand}[1]{L^{#1}(\partial\Omega)}
\newcommand{\Wp}[1]{W^{1,#1}(\Omega)}
\newcommand{\Wpzero}[1]{W^{1,#1}_0(\Omega)}
\newcommand{\lan}{\langle}
\newcommand{\ran}{\rangle}
\newcommand{\eps}{\varepsilon}
\newcommand{\ph}{\varphi}
\newcommand{\into}{\int_{\Omega}}
\newcommand{\weak}{\rightharpoonup}
\newcommand{\close}{\overline{\Omega}}
\newcommand{\cprime}{$'$}
\renewcommand{\l}{\left}
\renewcommand{\r}{\right}
\newcommand{\WH}{W^{1, \mathcal{H}}(\Omega)}
\newcommand{\WHzero}{W^{1, \mathcal{H}}_0(\Omega)}
\numberwithin{theorem}{section}
\numberwithin{equation}{section}
\def\le{\leqslant}
\def\phi{\varphi}
\def\abs#1{\left|{#1}\right|}
\def\norm#1{\left\|#1\right\|}
\title[Superlinear double phase problems with variable exponents]{Nehari manifold approach for superlinear double phase problems with variable exponents}
\author[\'{A}.\,Crespo-Blanco]{\'{A}ngel Crespo-Blanco}
\address[\'{A}.\,Crespo-Blanco]{Technische Universit\"{a}t Berlin, Institut f\"{u}r Mathematik, Stra\ss e des 17.\,Juni 136, 10623 Berlin, Germany}
\email{crespo@math.tu-berlin.de}
\author[P.\,Winkert]{Patrick Winkert}
\address[P.\,Winkert]{Technische Universit\"{a}t Berlin, Institut f\"{u}r Mathematik, Stra\ss e des 17.\,Juni 136, 10623 Berlin, Germany}
\email{winkert@math.tu-berlin.de}
\subjclass{35A01, 35J20, 35J25, 35J62, 35J92}
\keywords{Double phase operator with variable exponent, existence of solutions, multiple solutions, mountain pass theorem, Nehari manifold.}
\begin{document}

\begin{abstract}
	In this paper we consider quasilinear elliptic equations driven by the variable exponent double phase operator with superlinear right-hand sides. Under very general assumptions on the nonlinearity, we prove a multiplicity result for such problems whereby we show the existence of a positive solution, a negative one and a solution with changing sign. The sign-changing solution is obtained via the Nehari manifold approach and, in addition, we can also give information on its nodal domains.
\end{abstract}

\maketitle

\section{Introduction}

Let  $\Omega \subseteq \R^N$, $N \geq 2$ be a bounded domain with Lipschitz boundary $\partial \Omega$. In this paper we study the following variable exponent double phase problem with homogeneous Dirichlet boundary condition
\begin{equation}
	\label{Eq:Problem}
	\begin{aligned}
		-\operatorname{div} \left(|\nabla u|^{p(x)-2}\nabla u+\mu(x) |\nabla u|^{q(x)-2}\nabla u\right)
		  & = f(x,u) \quad &  & \text{in } \Omega,          \\
		u & = 0            &  & \text{on } \partial\Omega,
	\end{aligned}
\end{equation}
where we assume
\begin{enumerate}[label=\textnormal{(H1)},ref=\textnormal{H1}]
	\item\label{H1}
		$p,q \in C(\close)$ such that $1<p(x)<N$, $p(x)<q(x) \leq q_+ < p^*_-$, where $q_+ = \max_{x\in\close} q(x)$, $p^*_- = \min_{x \in \close} p^*(x)$ and $p^*(x) = Np(x) / (N - p(x))$ for all $x \in \close$, $0\leq \mu(\cdot) \in \Lp{\infty}$ and $p(\cdot)$ satisfies a monotonicity condition, that is, there exists a vector $l \in \R^N \setminus \{ 0 \}$ such that for all $x \in \Omega$ the function
		\begin{equation*}
			g_x (t) = p(x + tl) \quad \text{ with } t \in I_x = \{ t \in \R : x + tl \in \Omega\}
		\end{equation*}
		is monotone.
\end{enumerate}
Furthermore, the nonlinearity $f\colon\Omega\times\R\to\R$ is $(q_+-1)$-superlinear near infinity and $(p( \cdot)-1)$-superlinear near zero with respect to the second variable, see the precise conditions in \eqref{Hf}.

The differential operator in \eqref{Eq:Problem} is the so-called double phase operator with variable exponents and is given by
\begin{align*}
	-\operatorname{div} \l(|\nabla u|^{p(x)-2} \nabla u+ \mu(x) |\nabla u|^{q(x)-2} \nabla u\r)\quad \text{for }u\in \Wp{\mathcal{H}},
\end{align*}
on an appropriate Musielak-Orlicz Sobolev space $\Wp{\mathcal{H}}$.
Special cases of this operator occur when $\inf_{\close} \mu>0$ (the weighted $(q(\cdot),p(\cdot))$-Laplacian) or when $\mu\equiv 0$ (the $p(\cdot)$-Laplace differential operator), which have been studied in the literature before.
The energy functional $I\colon\WHzero \to \R$ related to the variable exponent double phase
operator is given by
\begin{align*}
	I(u)=\into \l( \frac{|\nabla u|^{p(x)}}{p(x)}
	+ \mu(x) \frac{|\nabla u|^{q(x)}}{q(x)}\r)\dx,
\end{align*}
where the integrand $H(x,\xi)=\frac{1}{p(x)}|\xi|^{p(x)}+\frac{\mu(x)}{q(x)}|\xi|^{q(x)}$ for all $(x,\xi) \in \Omega\times \R^N$, according to the nomenclature originally by Marcellini (see his papers \cite{Marcellini-1989b,Marcellini-1991}), has unbalanced growth, that is,
\begin{align*}
	b_1|\xi|^{p(x)} \leq H(x,\xi) \leq b_2 \l(1+|\xi|^{q(x)}\r)
\end{align*}
for a.\,a.\,$x\in\Omega$ and for all $\xi\in\R^N$ with $b_1,b_2>0$.

The most notorious property of the functional $I$ is the nonuniform ellipticity depending on whether we are at the set where the weight function is zero, that is, on the set $\{x\in \Omega\,:\, \mu(x)=0\}$. Indeed, the integrand of $I$ exhibits ellipticity in the gradient of order $q(x)$ on the points $x$ where $\mu(x)\geq \eps > 0$ for any fixed $\eps > 0$ and of order $p(x)$ on the points $x$ where $\mu(x)$ vanishes. So the integrand $H$ switches between two different phases of elliptic behaviours. This is the reason why it is called double phase.

Zhikov \cite{Zhikov-1986} was the first who studied functionals whose integrands change their ellipticity according to a point in order to provide models for strongly anisotropic materials. Functionals like $I$ above, both with constant and variable exponents, have been studied by several authors with respect to regularity of local minimizers. We refer to the works of Baroni-Colombo-Mingione \cite{Baroni-Colombo-Mingione-2015,Baroni-Colombo-Mingione-2016,Baroni-Colombo-Mingione-2018}, Colombo-Mingione \cite{Colombo-Mingione-2015a,Colombo-Mingione-2015b} and the recent results for nonuniformly elliptic variational problems and nonautonomous functionals of Beck-Mingione \cite{Beck-Mingione-2020,Beck-Mingione-2019}, De Filippis-Mingione \cite{De-Filippis-Mingione-2021} and H\"{a}st\"{o}-Ok \cite{Hasto-Ok-2019}.

Double phase differential operators and their corresponding energy functionals given above appear in physical models. For example, in the elasticity theory, the modulating coefficient $\mu(\cdot)$ dictates the geometry of composites made of two different materials with distinct power hardening exponents $q(\cdot)$ and $p(\cdot)$,
see Zhikov \cite{Zhikov-2011}. But also in other mathematical
applications such kind of functional plays an important role, for example, in the study of duality theory
and of the Lavrentiev gap phenomenon, see Papageorgiou-R\u{a}dulescu-Repov\v{s} \cite{Papageorgiou-Radulescu-Repovs-2019b}, Ragusa-Tachikawa \cite{Ragusa-Tachikawa-2020} and
Zhikov \cite{Zhikov-1995,Zhikov-2011}.

Existence results for double phase problems with constant exponents have been shown by several authors within the last decade. The corresponding eigenvalue problem of the double phase operator with Dirichlet boundary condition has been studied by Colasuonno-Squassina \cite{Colasuonno-Squassina-2016} who proved the existence and properties of related variational eigenvalues. Perera-Squassina \cite{Perera-Squassina-2018} showed the existence of a solution by applying Morse theory where they used a cohomological local splitting to get an estimate of the critical groups at zero. Multiplicity results including sign-changing solutions have been obtained by Gasi\'nski-Papa\-georgiou \cite{Gasinski-Papageorgiou-2019},  Liu-Dai \cite{Liu-Dai-2018} and Gasi\'nski-Winkert \cite{Gasinski-Winkert-2021} via the Nehari manifold treatment due to the lack of regularity results for such problems. We also mention the works of Biagi-Esposito-Vecchi \cite{Biagi-Esposito-Vecchi-2021}, Farkas-Winkert \cite{Farkas-Winkert-2021}, Fiscella \cite{Fiscella-2022}, Gasi\'nski-Winkert \cite{Gasinski-Winkert-2020a,Gasinski-Winkert-2020b}, Ge-Pucci \cite{Ge-Pucci-2022}, Stegli\'{n}ski \cite{Steglinski-2022} and Zeng-Bai-Gasi\'nski-Winkert \cite{Zeng-Bai-Gasinski-Winkert-2020a}.

So far, there are only few results involving the variable exponent double phase operator given above. We refer to the recent results of Aberqi-Bennouna-Benslimane-Ragusa \cite{Aberqi-Bennouna-Benslimane-Ragusa-2022} for existence results in complete manifolds, Albalawi-Alharthi-Vetro \cite{Albalawi-Alharthi-Vetro-2022} for convection problems with ($p(\cdot), q(\cdot)$)-Laplace type problems,	Bahrouni-R\u{a}dulescu-Winkert \cite{Bahrouni-Radulescu-Winkert-2020} for double phase problems of Baouendi-Grushin type operator, Crespo-Blanco-Gasi\'nski-Harjulehto-Winkert \cite{Crespo-Blanco-Gasinski-Winkert-2022} for double phase convection problems, Ho-Winkert \cite{Ho-Winkert-2022b} for Kirchhoff problems, Kim-Kim-Oh-Zeng \cite{Kim-Kim-Oh-Zeng-2022}
for concave-convex-type double-phase problems, Leonardi-Papageorgiou \cite{Leonardi-Papageorgiou-2022} for concave-convex problems, Zeng-R\u{a}dulescu-Winkert \cite{Zeng-Radulescu-Winkert-2022} for multivalued problems and Vetro-Winkert \cite{Vetro-Winkert-2023} for multiplicity results under very general growth conditions, see also the references therein. It is also worth mentioning the very recent contribution by Ho-Winkert \cite{Ho-Winkert-2022a} in which they provide an optimal embedding among a certain family of functions and a result about boundedness of the solutions.

The objective of this work is to prove multiplicity results for problem \eqref{Eq:Problem}, where the right-hand side term (possibly nonlinear) is supposed to have a ($q_+ -1$)-superlinear growth at $\pm \infty$. The treatment is inspired by the paper by Gasi\'{n}ski-Winkert \cite{Gasinski-Winkert-2021} on the constant exponents case, and it is interesting to see which are the requirements on the variable exponents to be able to generalize the results. Due to the lack of regularity results for problem \eqref{Eq:Problem}, several tools which are usually applied in the theory of multiplicity results based on the regularity results of Lieberman \cite{Lieberman-1991} and Pucci-Serrin \cite{Pucci-Serrin-2007}, cannot be used in our treatment. Instead we will make use of the mountain pass theorem together with the so-called Nehari manifold, whose definition is motivated by the works of Nehari \cite{Nehari-1960,Nehari-1961}. Further explanations about this method can be found in Section \ref{sign-changing-solutions}.

We also point out that we do not need to suppose conditions like
\begin{align}\label{condition-exponents}
	\frac{q_+}{p_-}<1+\frac{1}{N}
	\quad\text{or}\quad
	\left(\frac{q}{p}\right)^+<1+\frac{1}{N}
\end{align}
as it was used in the Nehari manifold treatments of Gasi\'nski-Papageorgiou \cite{Gasinski-Papageorgiou-2019},  Liu-Dai \cite{Liu-Dai-2018} and Gasi\'nski-Winkert \cite{Gasinski-Winkert-2021} in the constant exponent case. This is due to the fact that the existence of the equivalent norm $\|\nabla \cdot\|_\mathcal{H}$ in $W^{1,\mathcal{H}}_0(\Omega)$ can be proved without supposing \eqref{condition-exponents}, see the paper of Crespo-Blanco-Gasi\'nski-Harjulehto-Winkert \cite[Proposition 2.19]{Crespo-Blanco-Gasinski-Winkert-2022}.

The structure of the paper is the following. In Section \ref{preliminaries} we recall already known properties of the variable exponent spaces $\Lp{p(\cdot)}$ and of the Musielak-Orlicz Sobolev space $\WH$ compatible with the variable exponents double phase operator, among other technical tools that will be used later. In Section \ref{boundedness-solutions} we prove a priori bounds for weak solutions for a class of problems more general than \eqref{Eq:Problem} exploiting the very recent result by Ho-Winkert \cite[Theorem 4.2]{Ho-Winkert-2022a}. In Section \ref{constant-sign-solutions} we introduce the assumptions on the right-hand side $f$ that will be used during the rest of the paper and we prove the existence of a positive and a negative weak solution via the mountain pass theorem applied to functionals truncated at zero. After this, in Section \ref{sign-changing-solutions} we prove the existence of another solution, which turns out to be sign-changing, by solving a minimization problem on a modified version of the Nehari manifold and with the help of the quantitative deformation lemma. Finally, in Section \ref{nodal-domains}, we provide further information on the nodal domains of the sign-changing solution.

\section{Preliminaries}\label{preliminaries}

In this section we will present the main properties of the Musielak-Orlicz spaces $\Lp{\mathcal{H}}$, $\WH$ and $\WHzero$, together with other relevant results. We denote by $\Lp{r}$ and $L^r(\Omega;\R^N)$ the usual Lebesgue spaces endowed with the norm $\|\cdot\|_r$ for $1\leq r \leq \infty$ and by $\Wp{r}$ and $\Wpzero{r}$ we identify the corresponding Sobolev spaces equipped with the norms $\|\cdot\|_{1,r}$ and $\|\cdot\|_{1,r,0}$, respectively, for $1 \leq r<\infty$.

First, we present the relevant properties for the variable exponent Lebesgue and Sobolev spaces. For any $r\in C(\close)$ we define
\begin{align*}
	r_-=\min_{x\in \close}r(x) \quad\text{and}\quad r_+=\max_{x\in\close} r(x),
\end{align*}
and we also introduce the space
\begin{align*}
	C_+(\close) = \{ r \in C(\close)\,:\, 1 < r_- \}.
\end{align*}
Let $r \in C_+(\close)$ and let $M(\Omega)$ be the set of all measurable functions $u\colon\Omega\to \R$. We denote by $\Lp{r(\cdot)}$ the Lebesgue space with variable exponent, that is
\begin{align*}
	\Lp{r(\cdot)}=\l\{u\in M(\Omega)\,:\, \into |u|^{r(x)}\dx<\infty \r\},
\end{align*}
whose modular is given by
\begin{align*}
	\varrho_{r(\cdot)} (u) = \into |u|^{r(x)}\dx
\end{align*}
and which is endowed with its corresponding Luxemburg norm
\begin{align*}
	\|u\|_{r(\cdot)} =\inf \l \{\lambda>0 \, : \, \into \l(\frac{|u|}{\lambda}\r)^{r(x)}\dx \leq 1 \r\}.
\end{align*}

The properties of these spaces have been extensively studied in the literature, see for example the book by Diening-Harjulehto-H\"{a}st\"{o}-R$\mathring{\text{u}}$\v{z}i\v{c}ka  \cite{Diening-Harjulehto-Hasto-Ruzicka-2011}. The space $\Lp{r(\cdot)}$ endowed with $\| \cdot \|_{r(\cdot)}$ is a separable and reflexive Banach space. The conjugate variable exponent to $r$ is defined by $r' \in C_+(\close)$ such that
\begin{align*}
	\frac{1}{r(x)}+\frac{1}{r'(x)}=1 \quad\text{for all }x\in\close.
\end{align*}
It holds that $\Lp{r(\cdot)}^*=\Lp{r'(\cdot)}$ and a weaker version of the H\"older's inequality holds, given by
\begin{align*}
	\into |uv| \dx \leq \left[\frac{1}{r_-}+\frac{1}{r'_-}\right] \|u\|_{r(\cdot)}\|v\|_{r'(\cdot)} \leq 2 \|u\|_{r(\cdot)}\|v\|_{r'(\cdot)} \quad \text{ for all } u\in \Lp{r(\cdot)}, \; v\in \Lp{r'(\cdot)},
\end{align*}
see Diening-Harjulehto-H\"{a}st\"{o}-R$\mathring{\text{u}}$\v{z}i\v{c}ka  \cite[Lemma 3.2.20]{Diening-Harjulehto-Hasto-Ruzicka-2011}.

Furthermore, if $r_1, r_2\in C_+(\close)$ and $r_1(x) \leq r_2(x)$ for all $x\in \close$, then we have the continuous embedding
\begin{align*}
	\Lp{r_2(\cdot)} \hookrightarrow \Lp{r_1(\cdot)},
\end{align*}
see Diening-Harjulehto-H\"{a}st\"{o}-R$\mathring{\text{u}}$\v{z}i\v{c}ka  \cite[Theorem 3.3.1]{Diening-Harjulehto-Hasto-Ruzicka-2011}

Similarly, one can define the variable exponents spaces with weights: given any $\omega \in \Lp{1}$, $\omega \geq 0$, we define the modular
\begin{align*}
	\varrho_{r(\cdot),\omega} (u) = \into \omega(x) |u|^{r(x)}\dx;
\end{align*}
accordingly, we define the space
\begin{align*}
	L^{r(\cdot)}_\omega(\Omega)=\l\{u \in M(\Omega)\,:\, \into \varrho_{r(\cdot),\omega} (u) \dx<\infty \r\},
\end{align*}
endowed with the corresponding Luxemburg norm
\begin{align*}
	\|u\|_{r(\cdot),\omega} =\inf \l \{\lambda>0 \, : \, \varrho_{r(\cdot),\omega} \left( \frac{u}{\lambda} \right)  \leq 1 \r\}.
\end{align*}

The corresponding variable exponent Sobolev spaces can be defined analogously to the usual way using the variable exponent Lebesgue spaces. A nice introduction to them can be also found in the book by Diening-Harjulehto-H\"{a}st\"{o}-R$\mathring{\text{u}}$\v{z}i\v{c}ka  \cite{Diening-Harjulehto-Hasto-Ruzicka-2011}. For $r \in C_+(\close)$ the variable exponent Sobolev space $\Wp{r(\cdot)}$ is defined by
\begin{align*}
	\Wp{r(\cdot)}=\l\{ u \in \Lp{r(\cdot)} \,:\, |\nabla u| \in \Lp{r(\cdot)}\r\}
\end{align*}
and it is equipped with the norm
\begin{align*}
	\|u\|_{1,r(\cdot)}=\|u\|_{r(\cdot)}+\|\nabla u\|_{r(\cdot)},
\end{align*}
where $\|\nabla u\|_{r(\cdot)}= \|\,|\nabla u|\,\|_{r(\cdot)}$.	Moreover, we define
\begin{align*}
	\Wpzero{r(\cdot)}= \overline{C^\infty_0(\Omega)}^{\|\cdot\|_{1,r(\cdot)}}.
\end{align*}
The spaces $\Wp{r(\cdot)}$ and $\Wpzero{r(\cdot)}$ are both separable and
reflexive Banach spaces, in fact they both possess an equivalent, uniformly convex norm.

A Poincar\'e inequality of the norms holds in the space $\Wpzero{r(\cdot)}$. One way to see this is the paper by Fan-Shen-Zhao \cite[Theorem 1.3]{Fan-Shen-Zhao-2001}, together with the standard way to derive the Poincar\'e inequality from the compact embedding, see for example the paper by Crespo-Blanco-Gasi\'nski-Harjulehto-Winkert \cite[Proposition 2.18 (ii)]{Crespo-Blanco-Gasinski-Winkert-2022}.
\begin{proposition}
	Let $p \in C_+(\close)$, then there exists $c_0>0$ such that
	\begin{align*}
		\|u\|_{r(\cdot)} \leq c_0 \|\nabla u\|_{r(\cdot)}
		\quad\text{for all } u \in \Wpzero{r(\cdot)}
	\end{align*}
\end{proposition}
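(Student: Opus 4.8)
The plan is to derive the Poincar\'e inequality from the compact embedding $\Wpzero{r(\cdot)} \jqr \Lp{r(\cdot)}$ by the usual compactness-and-contradiction scheme. The compact embedding is provided by Fan-Shen-Zhao \cite[Theorem 1.3]{Fan-Shen-Zhao-2001} (alternatively by Crespo-Blanco-Gasi\'nski-Harjulehto-Winkert \cite[Proposition 2.18 (ii)]{Crespo-Blanco-Gasinski-Winkert-2022}), and I will also use that $\Wpzero{r(\cdot)}$ is a reflexive Banach space, as recalled above, so that bounded sequences admit weakly convergent subsequences.

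Suppose the assertion fails. Then for every $n \in \N$ there is $u_n \in \Wpzero{r(\cdot)}$ with $\|u_n\|_{r(\cdot)} > n \|\nabla u_n\|_{r(\cdot)}$; in particular $\|u_n\|_{r(\cdot)} > 0$ (the inequality is trivial if $\|u_n\|_{r(\cdot)} = 0$), so after replacing $u_n$ by $u_n / \|u_n\|_{r(\cdot)}$ I may assume
\begin{align*}
	\|u_n\|_{r(\cdot)} = 1 \quad \text{and} \quad \|\nabla u_n\|_{r(\cdot)} < \tfrac{1}{n} \quad \text{for all } n \in \N.
\end{align*}
Hence $(u_n)_{n \in \N}$ is bounded in $\Wpzero{r(\cdot)}$, so along a subsequence (not relabeled) $u_n \weak u$ in $\Wpzero{r(\cdot)}$ and, by the compact embedding, $u_n \to u$ in $\Lp{r(\cdot)}$. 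Strong convergence in $\Lp{r(\cdot)}$ forces $\|u\|_{r(\cdot)} = \lim_{n \to \infty} \|u_n\|_{r(\cdot)} = 1$, so $u \neq 0$.

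On the other hand, since the gradient map $\nabla \colon \Wpzero{r(\cdot)} \to \Lpvalued{r(\cdot)}$ is linear and bounded, $u_n \weak u$ gives $\nabla u_n \weak \nabla u$ in $\Lpvalued{r(\cdot)}$, and weak lower semicontinuity of the norm together with $\|\nabla u_n\|_{r(\cdot)} \to 0$ yields $\|\nabla u\|_{r(\cdot)} \leq \limi \|\nabla u_n\|_{r(\cdot)} = 0$, i.e.\ $\nabla u = 0$ a.e.\ in $\Omega$. As $\Omega$ is a (connected) domain, $u$ is constant on $\Omega$; but $u \in \Wpzero{r(\cdot)} = \overline{C^\infty_0(\Omega)}^{\|\cdot\|_{1,r(\cdot)}}$ has vanishing trace on the Lipschitz boundary $\rand$, so $u = 0$, contradicting $u \neq 0$. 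This contradiction proves the existence of $c_0$. I do not expect a genuine obstacle here: the whole content is carried by the compact embedding $\Wpzero{r(\cdot)} \jqr \Lp{r(\cdot)}$, and the only points requiring a little care are the normalization (with the trivial case $\|u_n\|_{r(\cdot)} = 0$) and the elementary fact that a constant function lying in $\Wpzero{r(\cdot)}$ must vanish identically.
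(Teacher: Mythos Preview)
Your proof is correct and follows exactly the approach the paper indicates: the paper does not give a proof but simply cites Fan--Shen--Zhao \cite[Theorem 1.3]{Fan-Shen-Zhao-2001} for the compact embedding $\Wpzero{r(\cdot)} \hookrightarrow\hookrightarrow \Lp{r(\cdot)}$ and refers to \cite[Proposition 2.18 (ii)]{Crespo-Blanco-Gasinski-Winkert-2022} for the ``standard way to derive the Poincar\'e inequality from the compact embedding'', which is precisely the normalization-and-contradiction argument you carry out.
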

Thus, we can define the equivalent norm on $\Wpzero{r(\cdot)}$
\begin{align*}
	\|u\|_{1,r(\cdot),0}=\|\nabla u\|_{r(\cdot)}.
\end{align*}

Alternatively, assuming an additional monotonicity condition on $p$,  we also have a Poincaré inequality for the modular, see the paper by Fan-Zhang-Zhao \cite[Theorem 3.3]{Fan-Zhang-Zhao-2005}.
\begin{proposition}\label{Prop:PoincareModular}
	Let $p \in C_+(\close)$ be such that there exists a vector $l \in \R^N \setminus \{ 0 \}$ with the property that for all $x \in \Omega$ the function
	\begin{equation*}
		g_x (t) = p(x + tl) \quad \text{ with } t \in I_x = \{ t \in \R : x + tl \in \Omega\}
	\end{equation*}
	is monotone (either increasing or decreasing). Then there exits a constant $C>0$ such that
	\begin{equation*}
		\varrho_{p(\cdot)} (u) \leq C \varrho_{p(\cdot)} (\nabla u)
		\quad \text{ for all } u \in \Wp{p(\cdot)},
	\end{equation*}
	where $\varrho_{p(\cdot)} (\nabla u) = \varrho_{p(\cdot)} ( | \nabla u | )$.
\end{proposition}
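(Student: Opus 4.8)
The plan is to adapt the slicing argument behind \cite[Theorem 3.3]{Fan-Zhang-Zhao-2005}: reduce the modular Poincar\'e inequality to a one-dimensional statement along the distinguished direction $l$, prove that one-dimensional inequality for a monotone exponent, and integrate back by Fubini's theorem.

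First, since $p_+<\infty$, norm convergence in $\Wp{p(\cdot)}$ implies convergence of the modulars, so that $\varrho_{p(\cdot)}(u_n)\to\varrho_{p(\cdot)}(u)$ and $\varrho_{p(\cdot)}(\nabla u_n)\to\varrho_{p(\cdot)}(\nabla u)$ whenever $u_n\to u$; by density of $C_0^\infty(\Omega)$ it therefore suffices to prove the estimate for $u\in C_0^\infty(\Omega)$, which we extend by zero to $\R^N$. After an orthogonal change of coordinates we may take $l=e_1$ and write $x=(t,y)\in\R\times\R^{N-1}$ and $\Omega_y=\{t\in\R:(t,y)\in\Omega\}$. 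By Fubini,
\begin{align*}
	\varrho_{p(\cdot)}(u)=\int_{\R^{N-1}}\int_{\Omega_y}|u(t,y)|^{p(t,y)}\diff t\,\diff y,
\end{align*}
and, since $|\partial_t u|\le|\nabla u|$, also $\varrho_{p(\cdot)}(\partial_t u)\le\varrho_{p(\cdot)}(\nabla u)$. Moreover, by \ref{H1}, for each $y$ with $\Omega_y\neq\emptyset$ the function $p_y(t):=p(t,y)$ is monotone on $\Omega_y$ (with the monotonicity type of $g_x$), and $\operatorname{diam}\Omega_y\le\operatorname{diam}\Omega$. Hence it is enough to prove, with a constant $C=C(\operatorname{diam}\Omega,p_-,p_+)$ independent of $y$, the one-dimensional inequality
\begin{align*}
	\int_{\Omega_y}|v(t)|^{p_y(t)}\diff t\le C\int_{\Omega_y}|v'(t)|^{p_y(t)}\diff t,\qquad v(t):=u(t,y);
\end{align*}
splitting $\Omega_y$ into its connected components and summing, one may assume $\Omega_y=(a,b)$ and $v\in C_0^\infty(a,b)$.

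This one-dimensional inequality is the heart of the argument and the only place where monotonicity is used. Assume $p_y$ non-decreasing (the non-increasing case is symmetric, integrating from $b$ rather than from $a$). For $t\in(a,b)$ I would use $v(t)=-\int_t^b v'(s)\diff s$ and H\"older's inequality with exponent $p_y(t)$ to obtain
\begin{align*}
	|v(t)|^{p_y(t)}\le(b-t)^{p_y(t)-1}\int_t^b|v'(s)|^{p_y(t)}\diff s,
\end{align*}
the point being that for $s\ge t$ one has $p_y(s)\ge p_y(t)$, hence $|v'(s)|^{p_y(t)}\le|v'(s)|^{p_y(s)}$ on $\{|v'|>1\}$ while $|v'(s)|^{p_y(t)}\le1$ on $\{|v'|\le1\}$; integrating in $t$ and handling the lower-order terms by an integration by parts against an appropriate weight together with Young's inequality should give the bound. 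I expect this step to be the main obstacle: the real difficulty is to treat the small-gradient set sharply enough that the final estimate is genuinely \emph{linear} in $\varrho_{p(\cdot)}(\nabla u)$ rather than merely affine, and this is exactly what monotonicity provides — it forces any region where $|v|$ is large and $p$ is large to sit close to the boundary, so that the descent of $v$ back to zero is expensive; without monotonicity the quotient $\varrho_{p(\cdot)}(u)/\varrho_{p(\cdot)}(\nabla u)$ is not bounded.

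Finally, integrating the one-dimensional estimate over $y\in\R^{N-1}$ and applying Fubini once more gives $\varrho_{p(\cdot)}(u)\le C\,\varrho_{p(\cdot)}(\partial_t u)\le C\,\varrho_{p(\cdot)}(\nabla u)$ for all $u\in C_0^\infty(\Omega)$, and the density argument from the first step extends this to the full space.
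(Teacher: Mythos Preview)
The paper does not supply its own proof of this proposition: it is quoted from \cite[Theorem~3.3]{Fan-Zhang-Zhao-2005}. Your overall architecture---density of $C_0^\infty(\Omega)$, rotating so that $l=e_1$, Fubini slicing, and reduction to a one-dimensional modular Poincar\'e inequality on each fibre with a monotone exponent---is precisely the skeleton of that argument, so the plan is correct. (Your tacit replacement of $\Wp{p(\cdot)}$ by $\Wpzero{p(\cdot)}$ is also right: as literally stated the inequality fails for nonzero constants, and the paper uses it only on $\Wpzero{p(\cdot)}$.)

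The genuine gap is exactly where you flag it: the one-dimensional lemma is not proved, only wished for. The chain you write down---$|v(t)|^{p_y(t)}\le(b-t)^{p_y(t)-1}\int_t^b|v'(s)|^{p_y(t)}\,\diff s$ followed by $|v'(s)|^{p_y(t)}\le|v'(s)|^{p_y(s)}$ on $\{|v'|>1\}$ and $|v'(s)|^{p_y(t)}\le 1$ on $\{|v'|\le 1\}$---yields after integration only
\[
\varrho_{p_y}(v)\le C_1\,\varrho_{p_y}(v')+C_2,
\]
and because the modular is not homogeneous the additive $C_2$ cannot be scaled away. Your appeal to ``integration by parts against an appropriate weight together with Young's inequality'' is a hope rather than an argument: the natural implementation, writing $\varrho_{p_y}(v)\le\int_a^b|v'(s)|\bigl[\int|v(t)|^{p_y(t)-1}\,\diff t\bigr]\,\diff s$ and applying Young with exponent $p_y(s)$, runs into the same obstruction, since the resulting exponent $(p_y(t)-1)p_y'(s)$ does not dominate $p_y(t)$ uniformly and a split on $\{|v|\gtrless 1\}$ again leaves an additive constant. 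To complete the proposal you must either supply a one-dimensional argument that genuinely uses the monotonicity to eliminate that constant, or import the proof from \cite{Fan-Zhang-Zhao-2005}; as written the decisive step is missing.
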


Moreover, we denote by $C^{0, \frac{1}{|\log t|}}(\close)$ the set of all functions $h\colon \close \to \R$ that are log-H\"older continuous, that is, there exists $C>0$ such that
\begin{align*}
	|h(x)-h(y)| \leq \frac{C}{|\log |x-y||}\quad\text{for all } x,y\in \close \text{ with } |x-y|<\frac{1}{2}.
\end{align*}

For $r \in C_+(\close)$ we introduce the critical Sobolev variable exponents $r^*$ and $r_*$ defined by
\begin{align*}
	r^*(x)=
	\begin{cases}
		\frac{Nr(x)}{N-r(x)} & \text{if }r(x)<N,        \\
		\ell_1(x)            & \text{if } N \leq r(x),
	\end{cases}
	\quad\text{for all }x\in\close
\end{align*}
and
\begin{align*}
	r_*(x)=
	\begin{cases}
		\frac{(N-1)r(x)}{N-r(x)} & \text{if }r(x)<N,        \\
		\ell_2(x)                & \text{if } N \leq r(x),
	\end{cases}
	\quad\text{for all }x\in\close,
\end{align*}
where $\ell_1, \ell_2 \in C(\close)$ are arbitrary functions that satisfy $r(x)<\ell_1(x)$ and $r(x)<\ell_2(x)$ for all $x \in \close$.

We also have Sobolev-type embeddings for the variable exponent Sobolev spaces. The following can be found in Crespo-Blanco-Gasi\'nski-Harjulehto-Winkert \cite[Propositions 2.1 and 2.2]{Crespo-Blanco-Gasinski-Winkert-2022} or Ho-Kim-Winkert-Zhang \cite[Proposition 2.4 and 2.5]{Ho-Kim-Winkert-Zhang-2022}.

\begin{proposition}
	Let $r\in C^{0, \frac{1}{|\log t|}}(\close) \cap C_+(\close)$ and let $s\in C(\close)$ be such that $1\leq  s(x)\leq r^*(x)$ for all $x\in\close$. Then, we have the continuous embedding
	\begin{align*}
		W^{1,r(\cdot)}(\Omega) \hookrightarrow L^{s(\cdot) }(\Omega ).
	\end{align*}
	If $r\in C_+(\close)$, $s\in C(\close)$ and $1\leq s(x)< r^*(x)$ for all
	$x\in\overline{\Omega}$, then this embedding is compact.
\end{proposition}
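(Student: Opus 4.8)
The plan is to reduce the Proposition to two classical facts: the endpoint Sobolev inequality $W^{1,r(\cdot)}(\Omega)\hookrightarrow L^{r^*(\cdot)}(\Omega)$ for log-Hölder continuous exponents, and the Rellich--Kondrachov theorem for constant exponents. Granting the endpoint embedding, the general continuous embedding $W^{1,r(\cdot)}(\Omega)\hookrightarrow L^{s(\cdot)}(\Omega)$ with $1\le s(x)\le r^*(x)$ follows immediately, since on the bounded domain $\Omega$ one has $L^{r^*(\cdot)}(\Omega)\hookrightarrow L^{s(\cdot)}(\Omega)$ by the inclusion between variable Lebesgue spaces with ordered exponents recalled earlier in the excerpt (after noting that, where $r(x)\ge N$, the target exponent $\ell_1$ is finite and $r^*(x)$ may be taken as large as one wishes, so this causes no difficulty).

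To establish the endpoint embedding I would use a Lipschitz extension operator $E\colon W^{1,r(\cdot)}(\Omega)\to W^{1,\tilde r(\cdot)}(\R^N)$, with $\tilde r$ a log-Hölder extension of $r$ to $\R^N$, and then argue pointwise on $\R^N$: any $u$ with compact support is controlled by the Riesz potential of its gradient, $|u(x)|\le c\,I_1(|\nabla u|)(x)$, and a standard interpolation of the Riesz potential gives $I_1(|\nabla u|)(x)\le c\,(M|\nabla u|(x))^{1-r(x)/N}\,\|\nabla u\|_{r(\cdot)}^{r(x)/N}$ at points where $r(x)<N$. The crucial input is the boundedness of the Hardy--Littlewood maximal operator on $L^{r(\cdot)}(\R^N)$ --- this is exactly where log-Hölder continuity enters --- from which, after a Young-type splitting, one reads off $\|u\|_{r^*(\cdot)}\le c\,\|u\|_{1,r(\cdot)}$ on the set $\{r<N\}$. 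On the set $\{r\ge N\}$ the claim reduces to $W^{1,N}(\Omega)\hookrightarrow L^{t(\cdot)}(\Omega)$ for every finite $t$, which is classical. An equivalent and more elementary route is to cover $\close$ by finitely many small balls: by log-Hölder continuity $\operatorname{osc}_{B}r\le c/|\log(\operatorname{diam}B)|$, which lets one compare $\varrho_{r(\cdot)}$ on $B$ with a constant-exponent modular up to multiplicative constants that stay bounded, apply the classical Sobolev inequality on $B$ with the frozen exponent $\max_{B}r$, and sum over the cover.

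For the compact embedding, when $1\le s(x)<r^*(x)$ for all $x$, I would localize so as to avoid the endpoint inequality entirely --- which explains why this part needs only $r\in C_+(\close)$. Cover $\close$ by finitely many balls $B_i$; by uniform continuity of $r$ and of $s$ on the compact set $\close$, and using the strict inequality $s<r^*$, one may shrink the $B_i$ so that on each one lying in $\{r<N\}$ we have $\max_{\overline{B_i}}s<\bigl(\min_{\overline{B_i}}r\bigr)^*$. Then on $B_i\cap\Omega$ (a Lipschitz, hence extension, domain for $B_i$ small) we have $W^{1,r(\cdot)}(B_i\cap\Omega)\hookrightarrow W^{1,\min_{\overline{B_i}}r}(B_i\cap\Omega)\hookrightarrow\hookrightarrow L^{\max_{\overline{B_i}}s}(B_i\cap\Omega)\hookrightarrow L^{s(\cdot)}(B_i\cap\Omega)$ by the classical Rellich--Kondrachov theorem; balls meeting $\{r\ge N\}$ are handled the same way with any finite constant exponent above $\max_{\overline{B_i}}s$. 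A bounded sequence in $W^{1,r(\cdot)}(\Omega)$ then has, after a finite diagonal extraction, a subsequence that is Cauchy in each $L^{s(\cdot)}(B_i\cap\Omega)$, hence Cauchy in $L^{s(\cdot)}(\Omega)$ by finite subadditivity of the modular. Alternatively, one can combine the already proven continuous embedding with the interpolation inequality $\|v\|_{s(\cdot)}\le c\,\|v\|_{1}^{\theta}\,\|v\|_{r^*(\cdot)}^{1-\theta}$ and the compact embedding $W^{1,r_-}(\Omega)\hookrightarrow\hookrightarrow L^{1}(\Omega)$; the interpolation holds with a uniform exponent $\theta\in(0,1)$ precisely because $\inf_{x\in\close}\bigl(1/s(x)-1/r^*(x)\bigr)>0$, which one checks by a short case analysis distinguishing $r(x)$ bounded away from $N$, equal to $N$, and approaching $N$ from below (where $1/r^*(x)\to0$).

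The single genuine obstacle is the endpoint inequality of the second paragraph: in contrast to the constant-exponent setting, both the Sobolev constant and the passage between the modular and the norm degenerate as the exponent oscillates, and it is exactly the $1/|\log t|$ modulus of continuity that quantitatively repairs this --- through the boundedness of the maximal operator, or equivalently through bounds of the form $|x-y|^{-|r(x)-r(y)|}\le c$ on small scales. The region $\{r\ge N\}$ needs a separate but entirely routine argument, and everything in the reduction and in the compactness step is soft once the localization is set up.
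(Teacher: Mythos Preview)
The paper does not supply its own proof of this proposition: it is stated in the preliminaries with the remark that ``the following can be found in Crespo-Blanco--Gasi\'nski--Harjulehto--Winkert \cite[Propositions 2.1 and 2.2]{Crespo-Blanco-Gasinski-Winkert-2022} or Ho--Kim--Winkert--Zhang \cite[Proposition 2.4 and 2.5]{Ho-Kim-Winkert-Zhang-2022}'', so there is no in-paper argument to compare against.

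Your outline is sound and is essentially the route taken in the cited literature (and in the book \cite{Diening-Harjulehto-Hasto-Ruzicka-2011} already referenced in the paper): for the continuous part, reduce to the critical embedding $W^{1,r(\cdot)}\hookrightarrow L^{r^*(\cdot)}$ via the Riesz-potential/maximal-function machinery (Diening's theorem on the boundedness of $M$ on $L^{r(\cdot)}$ under log-H\"older continuity), then use the monotone inclusion of variable Lebesgue spaces; for the compact part, localize and freeze exponents so as to invoke the classical Rellich--Kondrachov theorem, or alternatively interpolate between $L^1$ and $L^{r^*(\cdot)}$ and use the compact embedding into $L^1$. Both of your suggested routes for compactness are standard and correct; the interpolation alternative is in fact cleaner because it sidesteps the one genuine technical wrinkle in your localization argument, namely that $B_i\cap\Omega$ is \emph{not} automatically a Lipschitz domain for an arbitrary ball $B_i$ meeting $\partial\Omega$ (one has to work in boundary charts or use an extension operator on all of $\Omega$ first). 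This is a routine fix, not a real gap.
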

\begin{proposition}
	Suppose that $r\in C_+(\close)\cap W^{1,\gamma}(\Omega)$ for some $\gamma>N$. Let $s\in C(\close)$ be such that $1\leq  s(x)\leq r_*(x)$ for all $x\in\close$. Then, we have the continuous embedding
	\begin{align*}
		W^{1,r(\cdot)}(\Omega)\hookrightarrow L^{s(\cdot) }(\partial \Omega).
	\end{align*}
	If $r\in C_+(\close)$, $s\in C(\close)$ and $1\leq s(x)< r_*(x)$ for all
	$x\in\overline{\Omega}$, then the embedding is compact.
\end{proposition}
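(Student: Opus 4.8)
The plan is to obtain the boundary embedding directly from a Gauss--Green identity, which transfers the boundary modular of $|v|^{r_*(\cdot)}$ to interior integrals that are already controlled by the \emph{interior} Sobolev embedding of the preceding proposition. First I record the auxiliary facts. Since $r\in W^{1,\gamma}(\Omega)$ with $\gamma>N$, Morrey's embedding gives $r\in C^{0,1-N/\gamma}(\overline{\Omega})\subseteq C^{0,\frac{1}{|\log t|}}(\overline{\Omega})$, so the preceding proposition yields the continuous interior embedding $W^{1,r(\cdot)}(\Omega)\hookrightarrow L^{r^*(\cdot)}(\Omega)$, and the same log-H\"older regularity of $r$ gives density of $C^\infty(\overline{\Omega})$ in $W^{1,r(\cdot)}(\Omega)$. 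I carry out the argument in the main case $r_+<N$ (the general case reduces to it by a localization argument, since where $r$ approaches or exceeds $N$ the local boundary embedding is into every finite exponent and is strictly easier). Then $r^*(x)=\frac{Nr(x)}{N-r(x)}$, $r_*(x)=\frac{(N-1)r(x)}{N-r(x)}$, and the pointwise identities
\begin{equation*}
	\bigl(r_*(x)-1\bigr)r'(x)=r^*(x),\qquad\frac{r^*(x)}{r_*(x)}=\frac{N}{N-1}
\end{equation*}
hold for all $x\in\overline{\Omega}$. Moreover $t\mapsto\frac{(N-1)t}{N-t}$ is smooth on $[r_-,r_+]\subset(1,N)$, so $r_*\in W^{1,\gamma}(\Omega)$ with $\nabla r_*=\tfrac{N(N-1)}{(N-r)^2}\nabla r$, and since $\Omega$ is bounded Lipschitz there is a Lipschitz field $\Psi\colon\overline{\Omega}\to\R^N$ with $\Psi\cdot\nu\geq c_0>0$ $\sigma$-a.e.\ on $\partial\Omega$, where $\nu$ is the outer unit normal and $\sigma$ the surface measure.

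Next I would prove the estimate for $v\in C^\infty(\overline{\Omega})$. Using $r_*>1$ one checks that $|v|^{r_*(\cdot)}\Psi\in W^{1,1}(\Omega;\R^N)$ with $\nabla\bigl(|v|^{r_*(x)}\bigr)=r_*(x)|v|^{r_*(x)-2}v\,\nabla v+|v|^{r_*(x)}\log|v|\,\nabla r_*$, so the Gauss--Green formula on the Lipschitz domain gives
\begin{align*}
	c_0\int_{\partial\Omega}|v|^{r_*(x)}\,\diff\sigma
	&\leq\int_{\partial\Omega}|v|^{r_*(x)}(\Psi\cdot\nu)\,\diff\sigma
	=\into|v|^{r_*(x)}\operatorname{div}\Psi\,\diff x\\
	&\qquad+\into r_*(x)|v|^{r_*(x)-2}v\,\Psi\cdot\nabla v\,\diff x
	+\into|v|^{r_*(x)}\log|v|\,\Psi\cdot\nabla r_*\,\diff x.
\end{align*}
Now each term on the right is controlled by interior quantities: the first by $\|\operatorname{div}\Psi\|_\infty\into|v|^{r^*(x)}\diff x+|\Omega|$ since $r_*\leq r^*$; the second, after the generalized H\"older inequality, by $2\|\Psi\|_\infty(r_*)_+\bigl\||v|^{r_*(\cdot)-1}\bigr\|_{r'(\cdot)}\|\nabla v\|_{r(\cdot)}$, where the exact identity $(r_*(x)-1)r'(x)=r^*(x)$ makes the modular of $|v|^{r_*(\cdot)-1}$ in $L^{r'(\cdot)}(\Omega)$ equal to $\into|v|^{r^*(x)}\diff x$; and the third, after ordinary H\"older with the constant exponents $\gamma,\gamma'$, by $\|\Psi\|_\infty\|\nabla r_*\|_\gamma\bigl\||v|^{r_*(\cdot)}\log|v|\bigr\|_{\gamma'}$, which via $|v|^{r_*(x)}|\log|v||\leq C_\eps(1+|v|^{r_*(x)+\eps})$ and $\gamma'<\tfrac{N}{N-1}=\tfrac{r^*(x)}{r_*(x)}$ (exactly the hypothesis $\gamma>N$) is bounded, for $\eps>0$ small enough that $(r_*(x)+\eps)\gamma'\leq r^*(x)$ on $\overline{\Omega}$, again by $\into|v|^{r^*(x)}\diff x+|\Omega|$. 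Collecting the bounds and invoking $W^{1,r(\cdot)}(\Omega)\hookrightarrow L^{r^*(\cdot)}(\Omega)$ gives $\int_{\partial\Omega}|v|^{r_*(x)}\diff\sigma\leq\Phi\bigl(\|v\|_{W^{1,r(\cdot)}(\Omega)}\bigr)$ for $v\in C^\infty(\overline{\Omega})$ with $\Phi$ a fixed increasing function; applying this to $v/\|v\|_{W^{1,r(\cdot)}(\Omega)}$ turns the modular bound into $\|v\|_{L^{r_*(\cdot)}(\partial\Omega)}\leq C\|v\|_{W^{1,r(\cdot)}(\Omega)}$. Density of $C^\infty(\overline{\Omega})$ together with continuity of the trace $W^{1,r_-}(\Omega)\to L^1(\partial\Omega)$ (used only to identify limits) extends this to all of $W^{1,r(\cdot)}(\Omega)$, and finally $L^{r_*(\cdot)}(\partial\Omega)\hookrightarrow L^{s(\cdot)}(\partial\Omega)$ because $s\leq r_*$ and $\sigma(\partial\Omega)<\infty$, which proves the continuous embedding.

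For compactness, assume in addition $s(x)<r_*(x)$ on $\overline{\Omega}$, so $\delta_0:=\min_{\overline{\Omega}}(r_*-s)>0$. Let $u_n\wto u$ in $W^{1,r(\cdot)}(\Omega)$ and set $v_n:=u_n-u$. From $\into|\nabla v_n|^{r_-}\diff x\leq|\Omega|+\into|\nabla v_n|^{r(x)}\diff x$ (and likewise for $v_n$) the sequence $(v_n)$ is bounded in $W^{1,r_-}(\Omega)$; since $r\in C_+(\overline{\Omega})$ gives $1<(r_-)_*$, the classical compact trace embedding $W^{1,r_-}(\Omega)\hookrightarrow\hookrightarrow L^1(\partial\Omega)$ yields, along a subsequence, $v_n\to0$ in $L^1(\partial\Omega)$ and $\sigma$-a.e.\ on $\partial\Omega$. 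By the continuous embedding just proved, $\sup_n\int_{\partial\Omega}|v_n|^{r_*(x)}\diff\sigma<\infty$. For measurable $E\subseteq\partial\Omega$, the generalized H\"older inequality with the exponents $\tau(x):=r_*(x)/s(x)\geq1+\delta_0/s_+$ and $\tau'(x)$ gives $\int_E|v_n|^{s(x)}\diff\sigma\leq2\bigl\||v_n|^{s(\cdot)}\bigr\|_{L^{\tau(\cdot)}(\partial\Omega)}\|\mathbf{1}_E\|_{L^{\tau'(\cdot)}(\partial\Omega)}$, where the first factor is uniformly bounded because its modular equals $\int_{\partial\Omega}|v_n|^{r_*(x)}\diff\sigma$, while $1<\tau(x)\leq\tau_+<\infty$ forces $\|\mathbf{1}_E\|_{L^{\tau'(\cdot)}(\partial\Omega)}\to0$ as $\sigma(E)\to0$. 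Hence $\{|v_n|^{s(\cdot)}\}$ is uniformly integrable on $(\partial\Omega,\sigma)$, and together with $v_n\to0$ $\sigma$-a.e.\ Vitali's convergence theorem gives $\int_{\partial\Omega}|v_n|^{s(x)}\diff\sigma\to0$, i.e.\ $v_n\to0$ in $L^{s(\cdot)}(\partial\Omega)$. As the argument applies to every subsequence, $u_n\to u$ in $L^{s(\cdot)}(\partial\Omega)$, proving compactness.

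I expect the delicate step to be the third term of the Gauss--Green identity, the one carrying $\log|v|$: it is precisely there that the hypothesis $\gamma>N$, rather than mere log-H\"older continuity of $r$, becomes unavoidable (through $\gamma'<\tfrac{N}{N-1}$), and one must also ensure that $r_*\in W^{1,\gamma}(\Omega)$ and that $|v|^{r_*(\cdot)}\Psi$ genuinely belongs to $W^{1,1}(\Omega;\R^N)$ so that Gauss--Green is legitimate on the merely Lipschitz domain. A secondary technicality is the region where $r$ approaches or exceeds $N$, which has to be isolated by a localization argument since there $r_*$ is no longer given by $\frac{(N-1)r}{N-r}$ and the corresponding boundary estimate, into all finite exponents, is strictly easier.
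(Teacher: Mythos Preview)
The paper does not prove this proposition; it is quoted without proof from Crespo-Blanco--Gasi\'nski--Harjulehto--Winkert \cite{Crespo-Blanco-Gasinski-Winkert-2022} and Ho--Kim--Winkert--Zhang \cite{Ho-Kim-Winkert-Zhang-2022}. Your Gauss--Green route for the continuous embedding is essentially the standard one in those sources: the algebraic identity $(r_*(x)-1)r'(x)=r^*(x)$ is exactly why the boundary critical exponent matches the interior one, and you have correctly isolated the role of the hypothesis $\gamma>N$ (equivalently $\gamma'<N/(N-1)=r^*/r_*$) as the condition needed to absorb the logarithmic term $|v|^{r_*(\cdot)}\log|v|\,\nabla r_*$. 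The delicate points you flag---$r_*\in W^{1,\gamma}$ via the chain rule, legitimacy of Gauss--Green for $|v|^{r_*(\cdot)}\Psi\in W^{1,1}(\Omega;\R^N)$ on a Lipschitz domain, and localization where $r\geq N$---are real but routine.

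There is, however, one genuine gap. The compactness assertion is stated under the weaker hypothesis $r\in C_+(\close)$ alone, \emph{without} the $W^{1,\gamma}$ regularity, yet your Vitali argument invokes $\sup_n\int_{\partial\Omega}|v_n|^{r_*(x)}\,\diff\sigma<\infty$, which you obtain from the continuous embedding just proved---and that embedding required $r\in W^{1,\gamma}(\Omega)$. As written, you have established compactness only under the stronger hypothesis. To prove it under $r\in C_+(\close)$ alone, one instead localizes: by continuity of $r,s$ and the strict inequality $s<r_*$, cover $\close$ by finitely many open sets $U_i$ with $\sup_{U_i}s<\bigl(\inf_{U_i}r\bigr)_*$, and on each piece apply the classical constant-exponent compact trace embedding $W^{1,\inf_{U_i}r}(\Omega)\hookrightarrow\hookrightarrow L^{\sup_{U_i}s}(\partial\Omega\cap U_i)$ together with $W^{1,r(\cdot)}(\Omega)\hookrightarrow W^{1,\inf_{U_i}r}(\Omega\cap U_i)$. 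This repairs the argument without affecting the first half.
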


Finally, we recall the relation between the norm and the related modular function, the result is from the paper of Fan-Zhao \cite[Theorems 1.2 and 1.3]{Fan-Zhao-2001}.

\begin{proposition}\label{Prop:PropertiesModularVarExp}
	Let $r\in C_+(\close)$ and $u\in \Lp{r(\cdot)}$.
	\begin{enumerate}
		\item[\textnormal{(i)}]
			If $u\neq 0$, then $\|u\|_{r(\cdot)}=\lambda$ if and only if $\varrho_{r(\cdot)}\l(\frac{u}{\lambda}\r)=1$;
		\item[\textnormal{(ii)}]
			$\|u\|_{r(\cdot)}<1$ (resp. $=1$, $>1$) if and only if $\varrho_{r(\cdot)}(u)<1$ (resp. $=1$, $>1$);
		\item[\textnormal{(iii)}]
			if $\|u\|_{r(\cdot)}<1$, then $\|u\|_{r(\cdot)}^{r_+} \leq \varrho_{r(\cdot)}(u) \leq \|u\|_{r(\cdot)}^{r_-}$;
		\item[\textnormal{(iv)}]
			if $\|u\|_{r(\cdot)}>1$, then $\|u\|_{r(\cdot)}^{r_-} \leq \varrho_{r(\cdot)}(u) \leq \|u\|_{r(\cdot)}^{r_+}$;
		\item[\textnormal{(v)}]
			$\|u\|_{r(\cdot)} \to 0$ if and only if $\varrho_{r(\cdot)}(u)\to 0$;
		\item[\textnormal{(vi)}]
			$\|u\|_{r(\cdot)}\to +\infty$ if and only if $\varrho_{r(\cdot)}(u)\to +\infty$.
	\end{enumerate}
\end{proposition}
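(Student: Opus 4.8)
The plan is to reduce everything to the behaviour of the single–variable function $\lambda \mapsto \varrho_{r(\cdot)}(u/\lambda)$ for fixed $u \neq 0$, which is the classical argument of Fan–Zhao. First I would prove (i). Fix $u \in \Lp{r(\cdot)}$ with $u \neq 0$ and set $\psi(\lambda) = \varrho_{r(\cdot)}(u/\lambda) = \into (|u(x)|/\lambda)^{r(x)}\,dx$ for $\lambda > 0$. Since $r \in C_+(\close)$ and $\close$ is compact, we have $1 < r_- \le r(x) \le r_+ < \infty$, so for each $x$ with $u(x) \neq 0$ the map $\lambda \mapsto (|u(x)|/\lambda)^{r(x)}$ is continuous and strictly decreasing. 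Using the integrable majorant $|u|^{r(x)} \in L^1(\Omega)$ (which is finite because $u \in \Lp{r(\cdot)}$) for $\lambda \ge 1$ together with dominated convergence, and monotone convergence as $\lambda \downarrow 0$, one checks that $\psi$ is continuous and strictly decreasing on $(0,\infty)$ with $\psi(\lambda) \to 0$ as $\lambda \to \infty$ and $\psi(\lambda) \to \infty$ as $\lambda \downarrow 0$ (the latter since $|u| \ge \delta$ on a set of positive measure for some $\delta > 0$). Hence there is a unique $\lambda_0 > 0$ with $\psi(\lambda_0) = 1$, and, $\psi$ being decreasing, $\{\lambda > 0 : \psi(\lambda) \le 1\} = [\lambda_0, \infty)$, so the infimum defining the Luxemburg norm is attained and $\|u\|_{r(\cdot)} = \lambda_0$; this is precisely (i).

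Next I would derive (ii), (iii) and (iv) from (i) by sandwiching $\lambda^{r(x)}$ between $\lambda^{r_-}$ and $\lambda^{r_+}$. For $u \neq 0$ write $\lambda := \|u\|_{r(\cdot)}$, so that $\varrho_{r(\cdot)}(u/\lambda) = 1$ by (i), and note the identity $\varrho_{r(\cdot)}(u) = \into \lambda^{r(x)} (|u(x)|/\lambda)^{r(x)}\,dx$. If $\lambda < 1$ then $\lambda^{r_+} \le \lambda^{r(x)} \le \lambda^{r_-}$, so $\lambda^{r_+} = \lambda^{r_+}\varrho_{r(\cdot)}(u/\lambda) \le \varrho_{r(\cdot)}(u) \le \lambda^{r_-}\varrho_{r(\cdot)}(u/\lambda) = \lambda^{r_-}$, which already gives (iii) (the case $u=0$ being trivial); symmetrically, if $\lambda > 1$ then $\lambda^{r_-} \le \lambda^{r(x)} \le \lambda^{r_+}$ yields (iv), and if $\lambda = 1$ then $\varrho_{r(\cdot)}(u) = \varrho_{r(\cdot)}(u/\lambda) = 1$. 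Reading these three lines off shows $\|u\|_{r(\cdot)} < 1 \Rightarrow \varrho_{r(\cdot)}(u) < 1$, $\|u\|_{r(\cdot)} = 1 \Rightarrow \varrho_{r(\cdot)}(u) = 1$ and $\|u\|_{r(\cdot)} > 1 \Rightarrow \varrho_{r(\cdot)}(u) > 1$; since on each side the three alternatives are exhaustive and mutually exclusive, these implications are in fact equivalences, which is (ii).

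Finally, (v) and (vi) follow from (ii)--(iv) by elementary limiting arguments. If $\|u\|_{r(\cdot)} \to 0$ then eventually $\|u\|_{r(\cdot)} < 1$, so by (iii) $\varrho_{r(\cdot)}(u) \le \|u\|_{r(\cdot)}^{r_-} \to 0$; conversely, if $\varrho_{r(\cdot)}(u) \to 0$ then eventually $\varrho_{r(\cdot)}(u) < 1$, hence $\|u\|_{r(\cdot)} < 1$ by (ii) and $\|u\|_{r(\cdot)} \le \varrho_{r(\cdot)}(u)^{1/r_+} \to 0$ by (iii). Statement (vi) is obtained in the same way, using (iv) instead of (iii) and reversing the inequalities. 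I expect the only genuinely delicate point to be the proof of (i) — specifically, establishing continuity of $\psi$ and its limits at $0$ and $\infty$, i.e. that the infimum in the definition of $\|\cdot\|_{r(\cdot)}$ is attained — and this is exactly where the boundedness of $r$ on the compact set $\close$ enters; all subsequent steps are routine bookkeeping with the monotone power estimates.
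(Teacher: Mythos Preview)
Your argument is correct and is precisely the classical Fan--Zhao proof; the paper does not give its own proof of this proposition but simply cites Fan--Zhao \cite[Theorems 1.2 and 1.3]{Fan-Zhao-2001}, so you have in effect reproduced the referenced argument.
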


Now we introduce the definition and main properties of the Musielak-Orlicz functional space that we will use to study our equations. For the proofs and the rest of the details we refer to the paper of Crespo-Blanco-Gasi\'nski-Harjulehto-Winkert \cite[Section 2]{Crespo-Blanco-Gasinski-Winkert-2022}. To this end, let $\mathcal{H} \colon \Omega \times [0,\infty) \to [0,\infty)$ be defined as
\begin{align*}
	\mathcal{H}(x,t):=t^{p(x)} +\mu(x)t^{q(x)} \quad\text{for all } (x,t)\in \Omega \times [0,\infty),
\end{align*}
where we take similar (but weaker) assumptions as in \eqref{H1}, i.e., we suppose the following:
\begin{enumerate}[label=\textnormal{(H2)},ref=\textnormal{H2}]
	\item\label{H2}
		$p,q\in C_+(\close)$ such that $1<p(x)<N$ and $p(x) < q(x) < p^*(x)$ for all $x\in\close$, and $0 \leq \mu(\cdot) \in \Lp{\infty}$.
\end{enumerate}
It is known that $ \mathcal{H}$ is a locally integrable, generalized $N$-function which satisfies the $\Delta_2$-condition, that is,
\begin{align*}
	\mathcal{H}(x,2t)=(2t)^{p(x)} +\mu(x)(2t)^{q(x)} \leq 2^{q_+} \mathcal{H}(x,t),
\end{align*}
and also that
\begin{align*}
	\inf_{x\in\Omega} \mathcal{H}(x,1)>0.
\end{align*}
The corresponding modular of $\mathcal{H}$ is given by
\begin{align*}
	\rho_{\mathcal{H}}(u) = \into \mathcal{H} (x,|u|)\dx,
\end{align*}
its corresponding Musielak-Orlicz space $\Lp{\mathcal{H}}$ is
\begin{align*}
	L^{\mathcal{H}}(\Omega)=\left \{u \in M(\Omega) \,:\,\rho_{\mathcal{H}}(u) < +\infty \right \},
\end{align*}
and it is endowed with the norm
\begin{align*}
	\|u\|_{\mathcal{H}} = \inf \left \{ \tau >0 : \rho_{\mathcal{H}}\left(\frac{u}{\tau}\right) \leq 1  \right \}.
\end{align*}
This norm is uniformly convex and the space $\Lp{\mathcal{H}}$ is separable and reflexive and satisfies the Radon-Riesz property with respect to the modular $\rho_{\mathcal{H}}$. It has the following relations with the modular and embeddings, see the paper by Crespo-Blanco-Gasi\'nski-Harjulehto-Winkert \cite[Proposition 2.13]{Crespo-Blanco-Gasinski-Winkert-2022}.
\begin{proposition}\label{Prop:PropertiesModularDoublePhase}
	Under hypothesis \eqref{H2}, the following statements hold.
	\begin{enumerate}
		\item[\textnormal{(i)}]
			If $u\neq 0$, then $\|u\|_{\mathcal{H}}=\lambda$ if and only if $ \rho_{\mathcal{H}}(\frac{u}{\lambda})=1$;
		\item[\textnormal{(ii)}]
			$\|u\|_{\mathcal{H}}<1$ (resp.\,$>1$, $=1$) if and only if $ \rho_{\mathcal{H}}(u)<1$ (resp.\,$>1$, $=1$);
		\item[\textnormal{(iii)}]
			if $\|u\|_{\mathcal{H}}<1$, then $\|u\|_{\mathcal{H}}^{q_+}\leqslant \rho_{\mathcal{H}}(u)\leqslant\|u\|_{\mathcal{H}}^{p_-}$;
		\item[\textnormal{(iv)}]
			if $\|u\|_{\mathcal{H}}>1$, then $\|u\|_{\mathcal{H}}^{p_-}\leqslant \rho_{\mathcal{H}}(u)\leqslant\|u\|_{\mathcal{H}}^{q_+}$;
		\item[\textnormal{(v)}]
			$\|u\|_{\mathcal{H}}\to 0$ if and only if $ \rho_{\mathcal{H}}(u)\to 0$;
		\item[\textnormal{(vi)}]
			$\|u\|_{\mathcal{H}}\to +\infty$ if and only if $ \rho_{\mathcal{H}}(u)\to +\infty$;
		\item[\textnormal{(vii)}]
			$\Lp{q(\cdot)} \hookrightarrow
				\Lp{\mathcal{H}} \hookrightarrow
				\Lp{p(\cdot)} \cap L^{q(\cdot)}_\mu(\Omega)$,
	\end{enumerate}
	where $L^{q(\cdot)}_\mu(\Omega)$ is the seminormed space with exponent $q(\cdot)$ and weight $\mu$.
\end{proposition}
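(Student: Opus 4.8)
The plan is to reduce everything to one pointwise estimate for the integrand. Since $\mathcal{H}(x,t)=t^{p(x)}+\mu(x)t^{q(x)}$ and $p(x)\le q(x)$, a one-line computation gives, for every $x\in\Omega$, every $t\ge 0$ and every $\lambda\ge 1$,
\begin{align*}
	\lambda^{p(x)}\mathcal{H}(x,t)\le\mathcal{H}(x,\lambda t)\le\lambda^{q(x)}\mathcal{H}(x,t),
\end{align*}
while both inequalities reverse when $0<\lambda\le 1$. Integrating over $\Omega$ and using $p_-\le p(x)\le q(x)\le q_+$ yields the modular scaling bounds
\begin{align*}
	\lambda^{p_-}\rho_{\mathcal{H}}(u)\le\rho_{\mathcal{H}}(\lambda u)\le\lambda^{q_+}\rho_{\mathcal{H}}(u)\ \ (\lambda\ge 1),\qquad
	\lambda^{q_+}\rho_{\mathcal{H}}(u)\le\rho_{\mathcal{H}}(\lambda u)\le\lambda^{p_-}\rho_{\mathcal{H}}(u)\ \ (0<\lambda\le 1).
\end{align*}
Together with the $\Delta_2$-condition recalled above, these are the only tools needed for (i)--(vi).

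For (i), fix $u\in\Lp{\mathcal{H}}$ with $u\ne 0$ and study $\psi(\tau):=\rho_{\mathcal{H}}(u/\tau)$ on $(0,\infty)$. By the $\Delta_2$-condition $\Lp{\mathcal{H}}=\{v:\rho_{\mathcal{H}}(v)<\infty\}$ and, iterating $\Delta_2$, $\psi(\tau)<\infty$ for every $\tau>0$; hence $\psi$ is continuous (dominated convergence with a $\Delta_2$-domination on a neighbourhood of any $\tau_0>0$), it is strictly decreasing by the scaling bounds (this is where $u\ne 0$ enters), it tends to $0$ as $\tau\to\infty$, and it tends to $+\infty$ as $\tau\to 0^+$ (bound $\psi(\tau)\ge\varrho_{p(\cdot)}(u/\tau)$ and use monotone convergence). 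By the intermediate value theorem there is a unique $\lambda>0$ with $\psi(\lambda)=1$, and by the definition of the Luxemburg norm $\lambda=\|u\|_{\mathcal{H}}$; this is (i). Statement (ii) drops out by comparing $\psi(1)=\rho_{\mathcal{H}}(u)$ with $\psi(\|u\|_{\mathcal{H}})=1$ and invoking strict monotonicity (with $u=0$ trivial). For (iii) and (iv) write $u=\|u\|_{\mathcal{H}}v$, $v=u/\|u\|_{\mathcal{H}}$, so $\rho_{\mathcal{H}}(v)=1$ by (i), and feed $\lambda=\|u\|_{\mathcal{H}}$ into the scaling bounds: the branch $0<\lambda<1$ is exactly (iii) and $\lambda>1$ is exactly (iv). Finally (v) and (vi) follow by combining (ii)--(iv): if $\|u_n\|_{\mathcal{H}}\to 0$ then eventually $\|u_n\|_{\mathcal{H}}<1$, so $\rho_{\mathcal{H}}(u_n)\le\|u_n\|_{\mathcal{H}}^{p_-}\to 0$; if $\rho_{\mathcal{H}}(u_n)\to 0$ then eventually $\rho_{\mathcal{H}}(u_n)<1$, so $\|u_n\|_{\mathcal{H}}<1$ by (ii) and $\|u_n\|_{\mathcal{H}}\le\rho_{\mathcal{H}}(u_n)^{1/q_+}\to 0$; the divergence statements are symmetric, using the $\ge 1$ branches.

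For (vii) I would read the embeddings off the generating functions. The pointwise bounds $\mathcal{H}(x,t)\ge t^{p(x)}$ and $\mathcal{H}(x,t)\ge\mu(x)t^{q(x)}$ give $\varrho_{p(\cdot)}(v)\le\rho_{\mathcal{H}}(v)$ and $\varrho_{q(\cdot),\mu}(v)\le\rho_{\mathcal{H}}(v)$ for all $v$; applying these with $v=u/\tau$ for any $\tau$ such that $\rho_{\mathcal{H}}(u/\tau)\le 1$ and taking the infimum over such $\tau$ gives $\|u\|_{p(\cdot)}\le\|u\|_{\mathcal{H}}$ and $\|u\|_{q(\cdot),\mu}\le\|u\|_{\mathcal{H}}$, i.e.\ $\Lp{\mathcal{H}}\hookrightarrow\Lp{p(\cdot)}\cap L^{q(\cdot)}_\mu(\Omega)$. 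For the reverse inclusion $\Lp{q(\cdot)}\hookrightarrow\Lp{\mathcal{H}}$ one verifies the pointwise domination $\mathcal{H}(x,t)\le K(Kt)^{q(x)}+g(x)$ for a.a.\ $x\in\Omega$ and all $t\ge 0$ with $K=1+\|\mu\|_\infty\ge 1$ and $g\equiv 1+\|\mu\|_\infty\in\Lp{1}$ (check $t\le 1$ and $t\ge 1$ separately, using $t^{p(x)}\le t^{q(x)}$ on the latter range), which is the standard sufficient condition for a continuous embedding of Musielak--Orlicz spaces and yields $\|u\|_{\mathcal{H}}\le c\|u\|_{q(\cdot)}$. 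The exponent bookkeeping with $p_-,q_+$ is routine; the one point requiring genuine care is the attainment claim behind (i), namely that the infimum defining $\|u\|_{\mathcal{H}}$ is realised with \emph{equality} $\rho_{\mathcal{H}}(u/\|u\|_{\mathcal{H}})=1$. This is exactly where $\Delta_2$ is indispensable: it guarantees $\rho_{\mathcal{H}}(u/\tau)<\infty$ for every $\tau>0$, ruling out the scenario in which the modular is $+\infty$ just below the critical scaling factor and then jumps to a value $\le 1$; with $\Delta_2$ plus continuity of $\tau\mapsto\rho_{\mathcal{H}}(u/\tau)$ the equality holds, and (ii)--(vii) are then bookkeeping.
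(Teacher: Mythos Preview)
The paper does not give its own proof of this proposition; it is quoted from \cite[Proposition 2.13]{Crespo-Blanco-Gasinski-Winkert-2022} as a preliminary result. Your argument is correct and follows exactly the standard route used there (and, more generally, for norm--modular relations in Musielak--Orlicz spaces): the pointwise scaling estimate $\lambda^{p(x)}\mathcal{H}(x,t)\le\mathcal{H}(x,\lambda t)\le\lambda^{q(x)}\mathcal{H}(x,t)$ for $\lambda\ge 1$ integrates to the modular bounds, the $\Delta_2$-condition makes $\tau\mapsto\rho_{\mathcal{H}}(u/\tau)$ finite, continuous and strictly decreasing so that the Luxemburg infimum is attained with equality, and (ii)--(vi) are then bookkeeping from (i) and the scaling bounds. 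For (vii) your reading of the embeddings off the pointwise inequalities $t^{p(x)}\le\mathcal{H}(x,t)$, $\mu(x)t^{q(x)}\le\mathcal{H}(x,t)$ and $\mathcal{H}(x,t)\le(1+\|\mu\|_\infty)\bigl(1+t^{q(x)}\bigr)$ is again the standard mechanism and matches what is done in the cited reference.
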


In the usual way we can also introduce the corresponding Sobolev spaces
\begin{align*}
	\Wp{\mathcal{H}} := \left \{u \in \Lp{\mathcal{H}} \,:\, |\nabla u| \in \Lp{\mathcal{H}} \right\}
	\quad \text{and} \quad
	\Wpzero{\mathcal{H}} := \overline{C^\infty_0(\Omega)}^{\| \cdot \|_{1,\mathcal{H}}}
\end{align*}
equipped with the norm
\begin{align*}
	\|u\|_{1,\mathcal{H}} = \|u\|_{\mathcal{H}}+\|\nabla u\|_{\mathcal{H}},
\end{align*}
where as above $\|\nabla u\|_\mathcal{H}=\| \, |\nabla u| \,\|_\mathcal{H}$. These spaces are also separable and reflexive, they have an equivalent, uniformly convex norm given by the Luxemburg norm $\| \cdot \|_{\hat{\rho}_\mathcal{H}}$ induced by the modular
\begin{align*}
	\hat{\rho}_\mathcal{H}(u) =\into \l(|\nabla u|^{p(x)}+\mu(x)|\nabla u|^{q(x)}\r)\dx+\into \l(|u|^{p(x)}+\mu(x)|u|^{q(x)}\r)\dx,
\end{align*}
and they also satisfy the Radon-Riesz property with respect to this modular. Furthermore, $\| \cdot \|_{\hat{\rho}_\mathcal{H}}$ and $\hat{\rho}_\mathcal{H}$ satisfy the same relations (i)-(vi) as $\rho_{\mathcal{H}}$ and $\| \cdot \|_{\mathcal{H}}$. Combining the embeddings of variable exponent Lebesgue spaces and $\Lp{\mathcal{H}}$ we have the following Sobolev-type embeddings, see the paper by Crespo-Blanco-Gasi\'nski-Harjulehto-Winkert \cite[Proposition 2.16]{Crespo-Blanco-Gasinski-Winkert-2022}.
\begin{proposition}\label{Prop:EmbeddingsH}
	Under hypothesis \eqref{H2}, the following embeddings hold.
	\begin{enumerate}
		\item[\textnormal{(i)}]
			$\Lp{\mathcal{H}} \hookrightarrow \Lp{r(\cdot)}$, $\WH \hookrightarrow \Wp{r(\cdot)}$ and $\WHzero \hookrightarrow \Wpzero{r(\cdot)}$ are continuous for all $r \in C(\close)$ with $1 \le r(x) \le p(x)$ for all $x \in \close$;
		\item[\textnormal{(ii)}]
			If $p \in C_+(\close) \cap C^{0, \frac{1}{|\log t|}}(\close)$, then $\Wp{\mathcal{H}} \hookrightarrow \Lp{r(\cdot)}$ and $\Wpzero{\mathcal{H}} \hookrightarrow \Lp{r(\cdot)}$ are continuous for $r \in C(\close)$ with $ 1 \leq r(x) \leq p^*(x)$ for all $x\in \close$;
		\item[\textnormal{(iii)}]
			$\Wp{\mathcal{H}} \hookrightarrow \Lp{r(\cdot)}$ and $\Wpzero{\mathcal{H}} \hookrightarrow \Lp{r(\cdot)}$ are compact for $r \in C(\close) $ with $ 1 \leq r(x) < p^*(x)$ for all $x\in \close$;
		\item[\textnormal{(iv)}]
			if $p \in C_+(\close) \cap W^{1,\gamma}(\Omega)$ for some $\gamma>N$, then $\Wp{\mathcal{H}} \hookrightarrow \Lprand{r(\cdot)}$ and $\Wpzero{\mathcal{H}} \hookrightarrow \Lprand{r(\cdot)}$ are continuous for $r \in C(\close)$ with $ 1 \leq r(x) \leq p_*(x)$ for all $x\in \close$;
		\item[\textnormal{(v)}]
			$\Wp{\mathcal{H}} \hookrightarrow \Lprand{r(\cdot)}$ and $\Wpzero{\mathcal{H}} \hookrightarrow \Lprand{r(\cdot)}$ are compact for $r \in C(\close) $ with $ 1 \leq r(x) < p_*(x)$ for all $x\in \close$;
		\item[\textnormal{(vi)}]
			$\Wp{\mathcal{H}} \hookrightarrow \Lp{\mathcal{H}}$ is compact.
	\end{enumerate}
\end{proposition}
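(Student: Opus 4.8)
The plan is to deduce all six statements from the single continuous embedding $\WH\hookrightarrow\Wp{p(\cdot)}$, which is precisely part (i) with $r=p$; once $\WH$ and $\WHzero$ are realized as continuously embedded subspaces of $\Wp{p(\cdot)}$ and $\Wpzero{p(\cdot)}$, everything else follows by composing with the embedding and trace theorems for the ordinary variable exponent Sobolev spaces recalled earlier in this section, together with the standard fact that a compact operator composed with a bounded one is compact. To prove (i) I would start from Proposition~\ref{Prop:PropertiesModularDoublePhase}(vii), which gives $\Lp{\mathcal H}\hookrightarrow\Lp{p(\cdot)}$, and combine it with the inclusion $\Lp{p(\cdot)}\hookrightarrow\Lp{r(\cdot)}$, valid on the finite-measure domain $\Omega$ whenever $1\le r(x)\le p(x)$ (quantitatively via the elementary modular bound $\varrho_{r(\cdot)}(u)\le|\Omega|+\varrho_{p(\cdot)}(u)$ together with the norm--modular relations of Proposition~\ref{Prop:PropertiesModularVarExp}). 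Applying this to $u$ and to $\nabla u$ at once yields $\WH\hookrightarrow\Wp{r(\cdot)}$, and the $\WHzero$ statement follows by density: if $\varphi_n\in C^\infty_0(\Omega)$ converges to $u$ in $\|\cdot\|_{1,\mathcal H}$, continuity of the embedding forces $\varphi_n\to u$ also in $\|\cdot\|_{1,r(\cdot)}$, whence $u$ lies in the closure $\Wpzero{r(\cdot)}$ of $C^\infty_0(\Omega)$ in $\Wp{r(\cdot)}$.

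For (ii) and (iv) I would compose $\WH\hookrightarrow\Wp{p(\cdot)}$ from (i) with, respectively, the continuous Sobolev embedding $\Wp{p(\cdot)}\hookrightarrow\Lp{r(\cdot)}$ (valid for $1\le r(x)\le p^*(x)$ when $p\in C_+(\close)\cap C^{0,\frac{1}{|\log t|}}(\close)$) and the continuous trace embedding $\Wp{p(\cdot)}\hookrightarrow\Lprand{r(\cdot)}$ (valid for $1\le r(x)\le p_*(x)$ when $p\in C_+(\close)\cap W^{1,\gamma}(\Omega)$ with $\gamma>N$). For (iii) and (v) the same compositions work, now invoking the \emph{compact} versions of these embeddings, which hold already for $1\le r(x)<p^*(x)$ in the interior and $1\le r(x)<p_*(x)$ on the boundary with $p$ only assumed to lie in $C_+(\close)$. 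In all of (ii)--(v) the corresponding statements for $\Wpzero{\mathcal H}$ are immediate, since $\Wpzero{\mathcal H}$ is a closed subspace of $\Wp{\mathcal H}$.

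The one assertion that is not a pure composition is (vi), but the argument there is short: hypothesis~\eqref{H2} forces $q(x)<p^*(x)$ on $\close$, so (iii) applied with $r=q$ already gives that $\Wp{\mathcal H}\hookrightarrow\Lp{q(\cdot)}$ is \emph{compact}; composing with the continuous embedding $\Lp{q(\cdot)}\hookrightarrow\Lp{\mathcal H}$ from Proposition~\ref{Prop:PropertiesModularDoublePhase}(vii) gives compactness of $\Wp{\mathcal H}\hookrightarrow\Lp{\mathcal H}$. I expect the only genuine (though minor) obstacles to be bookkeeping ones: making the embedding $\Lp{p(\cdot)}\hookrightarrow\Lp{r(\cdot)}$ in (i) rigorous when $r$ merely satisfies $r_-\ge1$, so that $r\notin C_+(\close)$ in general and the monotonicity statement for variable exponent Lebesgue spaces recalled above does not apply verbatim; and tracking exactly which of the cited variable exponent embedding and trace results need log-H\"older continuity or $W^{1,\gamma}$-regularity of $p$, so that the hypotheses attached to each of (i)--(vi) come out precisely as stated.
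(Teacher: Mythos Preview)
Your proposal is correct and matches the approach the paper itself indicates: the paper does not give a self-contained proof of this proposition but refers to \cite[Proposition~2.16]{Crespo-Blanco-Gasinski-Winkert-2022}, noting only that the result is obtained by ``combining the embeddings of variable exponent Lebesgue spaces and $\Lp{\mathcal H}$'', which is precisely your strategy of bootstrapping everything from $\Lp{\mathcal H}\hookrightarrow\Lp{p(\cdot)}$ (Proposition~\ref{Prop:PropertiesModularDoublePhase}(vii)) and then composing with the classical variable exponent Sobolev and trace embeddings. The two minor bookkeeping points you flag (the case $r_-=1$ in (i), handled by the modular bound $\varrho_{r(\cdot)}(u)\le|\Omega|+\varrho_{p(\cdot)}(u)$, and the density argument for $\WHzero\hookrightarrow\Wpzero{r(\cdot)}$) are exactly the right ones to check, and your treatment of (vi) via (iii) with $r=q$ composed with $\Lp{q(\cdot)}\hookrightarrow\Lp{\mathcal H}$ is the standard argument.
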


For $s \in \R$ let $s^+ = \max \{ s , 0 \}$ and $s^- = \max \{ -s , 0 \}$, hence $s = s^+ - s^-$ and $|s| = s^+ + s^-$. It holds that $\Wp{\mathcal{H}}$ and $\Wpzero{\mathcal{H}}$ are closed under truncations, maxima and minima, see the paper by Crespo-Blanco-Gasi\'nski-Harjulehto-Winkert \cite[Proposition 2.17]{Crespo-Blanco-Gasinski-Winkert-2022}.
\begin{proposition}\label{Prop:ClosedTruncations}
	Under hypothesis \eqref{H2}, the following statements hold.
	\begin{enumerate}
		\item[\textnormal{(i)}]
			If $u \in \WH$, then $u^\pm \in \WH$.
		\item[\textnormal{(ii)}]
			If $u \in \WHzero$, then $u^\pm \in \WHzero$.
	\end{enumerate}
\end{proposition}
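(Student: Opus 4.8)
The plan is to reduce both assertions to the classical truncation calculus in $W^{1,1}(\Omega)$ combined with the monotonicity of $t\mapsto\mathcal H(x,t)$ and the modular–norm correspondences of Proposition~\ref{Prop:PropertiesModularDoublePhase}. Since $u^- = (-u)^+$ and $-u$ lies in the same space as $u$, it suffices throughout to treat $u^+$.

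For (i), first note that $\WH \hookrightarrow \Wp{p(\cdot)}$ by Proposition~\ref{Prop:EmbeddingsH}(i), and since $\Omega$ is bounded and $p_->1$ Hölder's inequality gives $\Lp{p(\cdot)}\hookrightarrow\Lp1$, so any $u\in\WH$ belongs to $W^{1,1}(\Omega)$. By the classical chain rule for Sobolev functions we then have $u^+\in W^{1,1}(\Omega)$ with $\nabla u^+=\mathbf 1_{\{u>0\}}\nabla u$ a.e.; in particular $|u^+|\le|u|$ and $|\nabla u^+|\le|\nabla u|$ a.e.\ in $\Omega$. As $t\mapsto\mathcal H(x,t)$ is nondecreasing this yields $\mathcal H(x,|u^+|)\le\mathcal H(x,|u|)$ and $\mathcal H(x,|\nabla u^+|)\le\mathcal H(x,|\nabla u|)$ a.e., hence $\rho_{\mathcal{H}}(u^+)\le\rho_{\mathcal{H}}(u)<\infty$ and $\rho_{\mathcal{H}}(\nabla u^+)\le\rho_{\mathcal{H}}(\nabla u)<\infty$. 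Thus $u^+\in\Lp{\mathcal H}$ and $|\nabla u^+|\in\Lp{\mathcal H}$, i.e.\ $u^+\in\WH$.

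For (ii), fix $u\in\WHzero$ and choose $u_n\in C^\infty_0(\Omega)$ with $u_n\to u$ in $\WH$. \textbf{Step 1:} each $u_n^+$ is Lipschitz with support in a compact subset of $\Omega$, so for small $\eps>0$ the mollification $v_\eps=\rho_\eps * u_n^+\in C^\infty_0(\Omega)$, with $v_\eps\to u_n^+$ uniformly on $\Omega$, $\nabla v_\eps=\rho_\eps*\nabla u_n^+\to\nabla u_n^+$ in $L^1(\Omega)$, and all these functions supported in a fixed compact set and bounded by $2L$, where $L$ is the Lipschitz constant of $u_n$. Using $\mu\in\Lp\infty$ together with the elementary bound $t^{p(x)}+\mu(x)t^{q(x)}\le C_L(1+\mu(x))\,t$ valid for $0\le t\le 2L$ (with $C_L$ depending only on $L,p_+,q_+$), one gets $\rho_{\mathcal{H}}(v_\eps-u_n^+)\le C_L(1+\|\mu\|_\infty)\|v_\eps-u_n^+\|_1\to0$ and likewise $\rho_{\mathcal{H}}(\nabla v_\eps-\nabla u_n^+)\to0$, so $v_\eps\to u_n^+$ in $\WH$ by Proposition~\ref{Prop:PropertiesModularDoublePhase}(v); hence $u_n^+\in\WHzero$. \textbf{Step 2:} it remains to show $u_n^+\to u^+$ in $\WH$, for then $u^+\in\WHzero$ because $\WHzero$ is closed and contains every $u_n^+$. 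Arguing by subsequences, from any subsequence extract a further one (not relabelled) along which $u_n\to u$ a.e.\ in $\Omega$ (possible since $u_n\to u$ in $\Lp{\mathcal H}\hookrightarrow\Lp1$). Then $|u_n^+-u^+|\le|u_n-u|$ gives $\rho_{\mathcal{H}}(u_n^+-u^+)\le\rho_{\mathcal{H}}(u_n-u)\to0$, while for the gradients we write
$\nabla u_n^+-\nabla u^+=\mathbf 1_{\{u_n>0\}}(\nabla u_n-\nabla u)+(\mathbf 1_{\{u_n>0\}}-\mathbf 1_{\{u>0\}})\nabla u$:
the first summand is dominated in modulus by $|\nabla u_n-\nabla u|$ so its modular tends to $0$, and the second tends to $0$ a.e.\ in $\Omega$ — it vanishes eventually on $\{u\ne0\}$ by pointwise convergence and vanishes on $\{u=0\}$ since $\nabla u=0$ a.e.\ there — and is dominated by $|\nabla u|$ with $\mathcal H(\cdot,|\nabla u|)\in\Lp1$, so dominated convergence applies; the $\Delta_2$-type inequality $\rho_{\mathcal{H}}(a+b)\le2^{q_+}\big(\rho_{\mathcal{H}}(a)+\rho_{\mathcal{H}}(b)\big)$ then gives $\rho_{\mathcal{H}}(\nabla u_n^+-\nabla u^+)\to0$. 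By Proposition~\ref{Prop:PropertiesModularDoublePhase}(v), $u_n^+\to u^+$ in $\WH$ along that subsequence, and the subsequence principle completes Step 2.

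The only genuinely delicate point is the term $(\mathbf 1_{\{u_n>0\}}-\mathbf 1_{\{u>0\}})\nabla u$ in Step 2: one must pass from norm convergence in $\WH$ to pointwise a.e.\ convergence (hence the subsequence argument) and invoke the classical fact that $\nabla u=0$ a.e.\ on $\{u=0\}$; the handling of smooth compactly supported functions in Step 1 via mollification is routine once one observes that the fixed compact support plus the uniform $L^\infty$ bound turn the modular estimates into ordinary $L^1$ estimates, and everything else is a direct combination of the properties collected in Section~\ref{preliminaries}.
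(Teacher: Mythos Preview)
Your proof is correct. The paper does not actually prove this proposition in the text: it simply records the statement and cites Crespo-Blanco--Gasi\'nski--Harjulehto--Winkert \cite[Proposition 2.17]{Crespo-Blanco-Gasinski-Winkert-2022} for the proof. So there is no in-paper argument to compare against; what you have written is a complete, self-contained justification.

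Part (i) is the standard reduction to $W^{1,1}(\Omega)$ plus monotonicity of $t\mapsto\mathcal H(x,t)$, and is fine as written. For part (ii), your two-step scheme --- first show that $u_n^+\in\WHzero$ for $u_n\in C_0^\infty(\Omega)$ via mollification and the uniform $L^\infty$/compact-support bounds, then prove $u_n^+\to u^+$ in $\WH$ by splitting $\nabla u_n^+-\nabla u^+$ and handling the characteristic-function term with pointwise convergence, $\nabla u=0$ a.e.\ on $\{u=0\}$, and dominated convergence --- is exactly the natural argument and goes through. One cosmetic point: in Step~1 you bound ``all these functions'' by $2L$ with $L$ the Lipschitz constant of $u_n$, but for the \emph{function values} (as opposed to the gradients) the relevant bound is $\|u_n\|_\infty$; simply take $L=\max\{\|u_n\|_\infty,\|\nabla u_n\|_\infty\}$ and the estimate $t^{p(x)}+\mu(x)t^{q(x)}\le C_L(1+\mu(x))t$ for $0\le t\le 2L$ is then correct as stated.
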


On the other hand, in $\Wpzero{\mathcal{H}}$ we have a Poincaré inequality, see the paper by Crespo-Blanco-Gasi\'nski-Harjulehto-Winkert \cite[Proposition 2.18]{Crespo-Blanco-Gasinski-Winkert-2022}.
\begin{proposition} \label{Prop:PoincareIneq}
	Let \eqref{H2} be satisfied. Then there exists a constant $C>0$ such that
	\begin{align*}
		\|u\|_{\mathcal{H}} \leq C\|\nabla u\|_{\mathcal{H}}\quad\text{for all } u \in \Wpzero{\mathcal{H}}.
	\end{align*}
\end{proposition}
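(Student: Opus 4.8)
The plan is to derive the inequality from the compact embedding $\Wpzero{\mathcal H}\hookrightarrow\Lp{\mathcal H}$ (Proposition \ref{Prop:EmbeddingsH}(vi), which applies on $\Wpzero{\mathcal H}$ since this is a closed subspace of $\Wp{\mathcal H}$) by the standard compactness/contradiction scheme, exactly the one used to obtain the Poincaré inequality in $\Wpzero{r(\cdot)}$ recalled earlier in this section. So I would assume, for contradiction, that for each $n\in\N$ there is $v_n\in\Wpzero{\mathcal H}$ with $\|v_n\|_{\mathcal H}>n\|\nabla v_n\|_{\mathcal H}$. Then $v_n\neq 0$, and moreover $\nabla v_n\not\equiv 0$: if $\nabla v_n\equiv 0$, then by Proposition \ref{Prop:EmbeddingsH}(i) one has $v_n\in\Wpzero{p(\cdot)}$ with vanishing gradient, and the Poincaré inequality in $\Wpzero{p(\cdot)}$ forces $v_n=0$, a contradiction. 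Normalizing, $u_n:=v_n/\|v_n\|_{\mathcal H}$ gives a sequence with $\|u_n\|_{\mathcal H}=1$ and $\|\nabla u_n\|_{\mathcal H}<1/n\to 0$, in particular bounded in $\Wpzero{\mathcal H}$.

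Next, using reflexivity of $\Wpzero{\mathcal H}$, I would pass to a subsequence (not relabeled) with $u_n\weak u$ in $\Wpzero{\mathcal H}$, and then, by the compact embedding $\Wpzero{\mathcal H}\hookrightarrow\Lp{\mathcal H}$, to a further subsequence (not relabeled) with $u_n\to u$ in $\Lp{\mathcal H}$; hence $\|u\|_{\mathcal H}=\lim_{n\to\infty}\|u_n\|_{\mathcal H}=1$. On the other hand $\|\nabla u_n\|_{\mathcal H}\to 0$ means $\nabla u_n\to 0$ in $\Lp{\mathcal H}$, while $u_n\weak u$ in $\Wpzero{\mathcal H}$ gives $\nabla u_n\weak\nabla u$ in $\Lp{\mathcal H}$ (the gradient being a bounded linear, hence weakly continuous, operator); by uniqueness of weak limits, $\nabla u=0$ a.e.\ in $\Omega$. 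Invoking Proposition \ref{Prop:EmbeddingsH}(i) once more, $u\in\Wpzero{p(\cdot)}$ with $\nabla u=0$, so the Poincaré inequality in $\Wpzero{p(\cdot)}$ forces $u=0$, contradicting $\|u\|_{\mathcal H}=1$. This yields the desired constant $C>0$ with $\|u\|_{\mathcal H}\le C\|\nabla u\|_{\mathcal H}$ for all $u\in\Wpzero{\mathcal H}$.

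The one step that needs care is the implication $\nabla u=0\Rightarrow u=0$: rather than arguing intrinsically inside the Musielak-Orlicz Sobolev space (where one would have to worry about connectedness of $\Omega$ and the precise sense of vanishing boundary values), it is cleanest to descend to $\Wpzero{p(\cdot)}$ via Proposition \ref{Prop:EmbeddingsH}(i) and quote the Poincaré inequality already available there. For the same reason I would not pursue the more hands-on route of writing $u(x)=\int_0^{x_1}\partial_1 u(s,x')\,ds$ for $u\in C_0^\infty(\Omega)$ (extended by zero, with $\Omega$ placed in a slab $\{0<x_1<d\}$) and estimating $\rho_{\mathcal H}(u/\tau)$ directly through Proposition \ref{Prop:PropertiesModularDoublePhase} and the $\Delta_2$-condition: although this would produce an explicit constant, the weight $\mu$ carries no pointwise regularity and the exponents $p(\cdot),q(\cdot)$ appearing inside the one-dimensional slice integral do not match those at the base point $x$, which makes controlling the $\mu$-weighted term on the region $\{|\nabla u|>1\}$ genuinely awkward; the compactness argument above sidesteps this entirely.
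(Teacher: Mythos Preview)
Your argument is correct and is precisely the ``standard way to derive the Poincar\'e inequality from the compact embedding'' that the paper itself alludes to just before Proposition~\ref{Prop:PoincareModular}; the paper does not give an in-text proof of Proposition~\ref{Prop:PoincareIneq} but refers to \cite[Proposition~2.18]{Crespo-Blanco-Gasinski-Winkert-2022}, where this same compactness/contradiction scheme is used. The only cosmetic remark is that the paragraph showing $\nabla v_n\not\equiv 0$ is unnecessary (you normalize by $\|v_n\|_{\mathcal H}$, not by $\|\nabla v_n\|_{\mathcal H}$), and the weak convergence of gradients should, strictly speaking, be stated in $L^{\mathcal H}(\Omega;\R^N)$ rather than $\Lp{\mathcal H}$; neither affects the validity of the proof.
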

This allows us to define the equivalent norm
\begin{align*}
	\|u\|_{1,\mathcal{H},0}=\|\nabla u\|_{\mathcal{H}} \quad\text{for all }u \in \Wpzero{\mathcal{H}},
\end{align*}
which is also uniformly convex and satisfies the Radon-Riesz property with respect to $\rho_{\mathcal{H}} ( \nabla \cdot )$, see the paper by Crespo-Blanco-Gasi\'nski-Harjulehto-Winkert \cite[Proposition 2.19]{Crespo-Blanco-Gasinski-Winkert-2022}.

Lastly, we discuss the properties of the double phase operator defined in the aforementioned space. Let $A\colon \WHzero\to \WHzero^*$ be the nonlinear mapping defined by
\begin{align*}
	\begin{split}
		\langle A(u),v\rangle &=\into \big(|\nabla u|^{p(x)-2}\nabla u+\mu(x)|\nabla u|^{q(x)-2}\nabla u \big)\cdot\nabla v \dx
		\quad \text{for all } u,v\in\WHzero,
	\end{split}
\end{align*}
with $\lan\,\cdot\,,\,\cdot\,\ran$ being the duality pairing between $\WHzero$ and its dual space $\WHzero^*$.  The properties of the operator $A\colon \WHzero\to \WHzero^*$ are summarized in the next proposition, which can be found in the paper by Crespo-Blanco-Gasi\'nski-Harjulehto-Winkert \cite[Proposition 2.18]{Crespo-Blanco-Gasinski-Winkert-2022}.

\begin{proposition}\label{Prop:PropertiesDoublePhaseOp}
	Under hypothesis \eqref{H2}, the operator $A$ is bounded (that is, it maps bounded sets into bounded sets), continuous, strictly monotone (hence maximal monotone), of type $(\Ss_+)$, coercive and a homeomorphism.
\end{proposition}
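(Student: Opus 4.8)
The plan is to establish the six properties roughly in the order listed, deriving all of them from the modular estimates collected in Section \ref{preliminaries} together with standard abstract facts about monotone operators on reflexive Banach spaces; the only genuinely substantial point is the $(\Ss_+)$ property.

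First I would prove boundedness and continuity. For boundedness, split $\langle A(u),v\rangle$ into its $p(\cdot)$- and $\mu$-weighted $q(\cdot)$-parts and apply the weak Hölder inequality in $\Lp{p(\cdot)}$, respectively its analogue in the seminormed space $L^{q(\cdot)}_\mu(\Omega)$; since $(p(\cdot)-1)p'(\cdot)=p(\cdot)$ and $(q(\cdot)-1)q'(\cdot)=q(\cdot)$, the norms of $|\nabla u|^{p(x)-1}$ and $|\nabla u|^{q(x)-1}$ are controlled by powers of $\rho_{\mathcal{H}}(\nabla u)$, hence of $\|\nabla u\|_{\mathcal{H}}$, through Propositions \ref{Prop:PropertiesModularVarExp} and \ref{Prop:PropertiesModularDoublePhase}, while $\|\nabla v\|_{p(\cdot)}$ and $\|\nabla v\|_{q(\cdot),\mu}$ are dominated by $\|\nabla v\|_{\mathcal{H}}$ by Proposition \ref{Prop:PropertiesModularDoublePhase}(vii); this gives $\|A(u)\|_{\WHzero^*}\le c(M)$ whenever $\|u\|_{1,\mathcal{H},0}\le M$. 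For continuity, take $u_n\to u$ in $\WHzero$ and, passing to a subsequence, arrange $\nabla u_n\to\nabla u$ a.e.\ together with a common dominating function in $\Lp{\mathcal{H}}$ (generalized dominated convergence for the modular $\rho_{\mathcal{H}}$, which satisfies $\Delta_2$); then $|\nabla u_n|^{p(x)-2}\nabla u_n\to|\nabla u|^{p(x)-2}\nabla u$ in $L^{p'(\cdot)}(\Omega;\R^N)$ and likewise for the weighted $q(\cdot)$-term, and Hölder yields $\|A(u_n)-A(u)\|_{\WHzero^*}\to0$; the usual subsequence argument removes the passage to a subsequence.

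Next, strict monotonicity, maximal monotonicity and coercivity. Write $\langle A(u)-A(v),u-v\rangle$ as the sum of $\into(|\nabla u|^{p(x)-2}\nabla u-|\nabla v|^{p(x)-2}\nabla v)\cdot(\nabla u-\nabla v)\dx$ and the corresponding $\mu$-weighted $q(\cdot)$-integral; by the elementary monotonicity inequality for $\xi\mapsto|\xi|^{r-2}\xi$ with $r>1$ both integrands are nonnegative and strictly positive unless $\nabla u=\nabla v$, so the pairing vanishes only when $\nabla u=\nabla v$ a.e., and then $u=v$ by the Poincaré inequality (Proposition \ref{Prop:PoincareIneq}); thus $A$ is strictly monotone. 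Being everywhere defined, monotone and continuous (hence hemicontinuous) on the reflexive space $\WHzero$, it is maximal monotone. For coercivity, $\langle A(u),u\rangle=\rho_{\mathcal{H}}(\nabla u)\ge\|\nabla u\|_{\mathcal{H}}^{p_-}$ as soon as $\|\nabla u\|_{\mathcal{H}}>1$ by Proposition \ref{Prop:PropertiesModularDoublePhase}(iv), so $\langle A(u),u\rangle/\|u\|_{1,\mathcal{H},0}\ge\|\nabla u\|_{\mathcal{H}}^{p_--1}\to\infty$ since $p_->1$.

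The main obstacle is the $(\Ss_+)$ property, and the homeomorphism follows from it. Assume $u_n\weak u$ in $\WHzero$ with $\lims\langle A(u_n),u_n-u\rangle\le0$; subtracting $\langle A(u),u_n-u\rangle\to0$ and using monotonicity forces $\langle A(u_n)-A(u),u_n-u\rangle\to0$, so the nonnegative integrand $R_n$ tends to $0$ in $L^1(\Omega)$ and, along a subsequence, a.e.\ in $\Omega$; since the $\mu$-weighted part of $R_n$ is nonnegative, $R_n$ dominates the $p(x)$-term, which as a function of $\nabla u_n(x)$ is coercive and strictly positive off $\nabla u_n(x)=\nabla u(x)$, whence $\nabla u_n\to\nabla u$ a.e.\ in $\Omega$. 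To pass to norm convergence I would prove $\rho_{\mathcal{H}}(\nabla u_n)\to\rho_{\mathcal{H}}(\nabla u)$: Fatou (using the a.e.\ convergence of gradients) gives $\limi\rho_{\mathcal{H}}(\nabla u_n)\ge\rho_{\mathcal{H}}(\nabla u)$, while writing $\rho_{\mathcal{H}}(\nabla u_n)=\langle A(u_n),u_n-u\rangle+\langle A(u_n),u\rangle$ — the first term tending to $0$ — and noting that $|\nabla u_n|^{p(x)-2}\nabla u_n$ (resp.\ the weighted $q(\cdot)$-term) is bounded in $L^{p'(\cdot)}(\Omega;\R^N)$ and converges a.e., hence weakly, to $|\nabla u|^{p(x)-2}\nabla u$, one obtains $\langle A(u_n),u\rangle\to\rho_{\mathcal{H}}(\nabla u)$ and thus $\lims\rho_{\mathcal{H}}(\nabla u_n)\le\rho_{\mathcal{H}}(\nabla u)$. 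Then $u_n\weak u$ together with $\rho_{\mathcal{H}}(\nabla u_n)\to\rho_{\mathcal{H}}(\nabla u)$ and the Radon–Riesz property of $\|\cdot\|_{1,\mathcal{H},0}=\|\nabla\cdot\|_{\mathcal{H}}$ with respect to $\rho_{\mathcal{H}}(\nabla\cdot)$ give $u_n\to u$ in $\WHzero$, and the subsequence principle yields $(\Ss_+)$. Finally, $A$ is injective by strict monotonicity and surjective by the Browder–Minty theorem (it is monotone, continuous and coercive on a reflexive Banach space), hence a continuous bijection; and $A^{-1}$ is continuous, for if $g_n\to g$ in $\WHzero^*$ and $u_n=A^{-1}(g_n)$, coercivity bounds $(u_n)$, along a subsequence $u_n\weak v$, and $\langle A(u_n),u_n-v\rangle=\langle g_n,u_n-v\rangle\to0$ forces $u_n\to v$ by $(\Ss_+)$, so $A(v)=g$ by continuity of $A$, i.e.\ $v=A^{-1}(g)$, and the subsequence principle completes the argument.
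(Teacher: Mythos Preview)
The paper does not supply its own proof of this proposition; it simply quotes the result from Crespo-Blanco--Gasi\'nski--Harjulehto--Winkert \cite{Crespo-Blanco-Gasinski-Winkert-2022}, so there is no in-paper argument to compare against.

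Your proposal is correct and covers all six properties. The arguments for boundedness, continuity, strict and maximal monotonicity, and coercivity are the standard ones and go through without difficulty under \eqref{H2}. For the $(\Ss_+)$ step your route---monotonicity forces $\langle A(u_n)-A(u),u_n-u\rangle\to0$, hence the nonnegative integrand tends to $0$ in $L^1$ and, along a subsequence, a.e., giving $\nabla u_n\to\nabla u$ a.e.; then $\rho_{\mathcal H}(\nabla u_n)\to\rho_{\mathcal H}(\nabla u)$ via Fatou plus the identity $\rho_{\mathcal H}(\nabla u_n)=\langle A(u_n),u_n-u\rangle+\langle A(u_n),u\rangle$; finally the Radon--Riesz property of $\|\cdot\|_{1,\mathcal H,0}$ with respect to $\rho_{\mathcal H}(\nabla\cdot)$ upgrades this to strong convergence---is precisely the approach used in the cited reference. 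The one step you state a bit tersely is ``bounded in $L^{p'(\cdot)}$ and converges a.e., hence weakly'': this is true, but it relies on reflexivity of $L^{p'(\cdot)}$ together with the fact that any weak limit must coincide with the a.e.\ limit (e.g.\ via the embedding $L^{p'(\cdot)}(\Omega)\hookrightarrow L^1(\Omega)$ on the bounded domain), so a one-line justification would be in order. The homeomorphism argument via Browder--Minty and $(\Ss_+)$ for the continuity of $A^{-1}$ is again standard and correct.
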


The last two ingredients of the preliminaries of this paper are the mountain pass theorem and the quantitative deformation lemma. Let $X$ be a Banach space, we say that a functional $\ph : X \to \R$ satisfies the Cerami condition or C-condition if for every sequence $\{u_n\}_{n \in \N} \subseteq X$ such that $\{ \ph(u_n) \}_{n \in \N} \subseteqq \R$ is bounded and it also satisfies
\begin{align*}
	( 1 + \| u_n \| )\ph'(u_n) \to 0 \text{ as } n \to \infty,
\end{align*}
then it contains a strongly convergent subsequence. Furthermore, we say that it satisfies the Cerami condition at the level $c \in \R$ or the C$_c$-condition if it holds for all the sequences such that $\ph (u_n) \to c$ as $n \to \infty$ instead of for all the bounded sequences. The proof of the following mountain pass theorem can be found in the book by Papageorgiou-R\u{a}dulescu-Repov\v{s} \cite[Theorem 5.4.6]{Papageorgiou-Radulescu-Repovs-2019a}.
\begin{theorem}[Mountain pass theorem] \label{Th:MPT}
	Let X be a Banach space, and let $\ph \in C^1(X)$. We assume that $\ph$ satisfies the following properties:
	\begin{itemize}
		\item there exist $u_0, u_1 \in X$ with $\| u_1 - u_0 \| > \delta > 0$, such that
		\begin{align*}
			\max\{\ph(u_0), \ph(u_1)\} &\leq \inf\{\ph(u) : \| u - u_0 \| = \delta \} = m_\delta;
		\end{align*}
		\item $\ph$ satisfies the \textnormal{C}$_c$-condition, where
		\begin{align*}
			c = \inf_{ \gamma \in \Gamma} \max_{ 0 \leq t \leq 1}
			\ph(\gamma (t)) \text{ with } \Gamma &= \{\gamma \in C([0, 1], X) : \gamma(0) = u_0, \gamma(1) = u_1\}.
		\end{align*}
	\end{itemize}
	Then $c \geq m_\delta$ and $c$ is a critical value of $\ph$. Moreover, if $c = m_\delta$, then there exists $u \in \partial B_\delta (u_0)$ such that $\ph'(u) = 0$.
\end{theorem}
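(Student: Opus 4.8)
The plan is to derive this classical minimax statement from the quantitative deformation lemma together with one elementary crossing argument. Throughout I would write $\varphi^{a} := \{u \in X : \varphi(u) \leq a\}$ for the sublevel sets and $K_{c} := \{u \in X : \varphi(u) = c,\ \varphi'(u) = 0\}$ for the critical set at level $c$; note that $\Gamma \neq \emptyset$ because it contains the affine path $t \mapsto (1-t)u_{0} + t u_{1}$. The first thing I would prove is the inequality $c \geq m_{\delta}$: for any $\gamma \in \Gamma$ the map $t \mapsto \|\gamma(t) - u_{0}\|$ is continuous, vanishes at $t = 0$ and exceeds $\delta$ at $t = 1$, so the intermediate value theorem produces $t_{\gamma} \in (0,1)$ with $\gamma(t_{\gamma}) \in \partial B_{\delta}(u_{0})$, whence $\max_{t}\varphi(\gamma(t)) \geq \varphi(\gamma(t_{\gamma})) \geq m_{\delta}$; taking the infimum over $\Gamma$ gives $c \geq m_{\delta}$, and since $\varphi$ is bounded on the compact segment $[u_{0},u_{1}]$ one also gets $c < +\infty$, so $c$ is a genuine real number.

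Next I would show $c$ is a critical value, first in the easy case $c > m_{\delta}$. Arguing by contradiction, assume $K_{c} = \emptyset$; since $\varphi$ satisfies the $\textnormal{C}_{c}$-condition, the quantitative deformation lemma furnishes, for some $\varepsilon \in (0, \tfrac12(c - m_{\delta}))$, a homeomorphism $\eta \colon X \to X$ that is the identity on $X \setminus \varphi^{-1}([c - 2\varepsilon, c + 2\varepsilon])$, satisfies $\varphi(\eta(u)) \leq \varphi(u)$ for all $u$, and has $\eta(\varphi^{c + \varepsilon}) \subseteq \varphi^{c - \varepsilon}$. Choosing $\gamma \in \Gamma$ with $\max_{t}\varphi(\gamma(t)) < c + \varepsilon$ and noting $\varphi(u_{0}), \varphi(u_{1}) \leq m_{\delta} < c - 2\varepsilon$, the map $\eta$ fixes both endpoints, so $\eta \circ \gamma \in \Gamma$, and yet $\max_{t}\varphi(\eta(\gamma(t))) \leq c - \varepsilon < c$ contradicts the definition of $c$.

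The delicate part, and the step I expect to be the main obstacle, is the borderline case $c = m_{\delta}$, where I must upgrade the conclusion to a critical point on $\partial B_{\delta}(u_{0})$ and where the argument above breaks since $c - 2\varepsilon > m_{\delta}$ is no longer available. I would argue by contradiction assuming $K_{c} \cap \partial B_{\delta}(u_{0}) = \emptyset$, and first use the $\textnormal{C}_{c}$-condition to rule out Cerami sequences clustering on the sphere at level $c$: if $u_{n}$ satisfied $\dist(u_{n}, \partial B_{\delta}(u_{0})) \to 0$, $\varphi(u_{n}) \to c$ and $\varphi'(u_{n}) \to 0$ in $X^{*}$, then $\{u_{n}\}$ would be bounded, hence a Cerami sequence at level $c$, so a subsequence would converge to a point of $K_{c}$ lying in the closed set $\partial B_{\delta}(u_{0})$ --- impossible. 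Consequently there are $\varepsilon_{0}, \rho_{0}, \beta_{0} > 0$ with $\|\varphi'(u)\|_{X^{*}} \geq \beta_{0}$ on the tube $T := \{u : \dist(u, \partial B_{\delta}(u_{0})) \leq \rho_{0},\ |\varphi(u) - c| \leq \varepsilon_{0}\}$, and after shrinking $\rho_{0}$ I may also assume $u_{0}, u_{1} \notin T$, since $\dist(u_{0}, \partial B_{\delta}(u_{0})) = \delta > 0$ and $\dist(u_{1}, \partial B_{\delta}(u_{0})) = \|u_{1} - u_{0}\| - \delta > 0$. Feeding this into the quantitative deformation lemma localized to $T$ yields, for small $\varepsilon \in (0, \varepsilon_{0})$, a map $\eta$ fixing $u_{0}$ and $u_{1}$, with $\varphi(\eta(u)) \leq \varphi(u)$, sending $\{u : \dist(u, \partial B_{\delta}(u_{0})) \leq \rho_{0}/2\} \cap \varphi^{c+\varepsilon}$ into $\varphi^{c-\varepsilon}$, and with displacement $\|\eta(u) - u\| < \rho_{0}/4$ for all $u$. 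Then for $\gamma \in \Gamma$ with $\max_{t}\varphi(\gamma(t)) < c+\varepsilon$, applying the crossing argument to $\eta\circ\gamma \in \Gamma$ gives some $\bar t$ with $\eta(\gamma(\bar t)) \in \partial B_{\delta}(u_{0})$; then $\gamma(\bar t)$ lies within $\rho_{0}/4$ of the sphere and has $\varphi(\gamma(\bar t)) < c+\varepsilon$, so $\varphi(\eta(\gamma(\bar t))) \leq c - \varepsilon$, whereas $\eta(\gamma(\bar t)) \in \partial B_{\delta}(u_{0})$ forces $\varphi(\eta(\gamma(\bar t))) \geq m_{\delta} = c$, the desired contradiction. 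This shows $K_{c} \cap \partial B_{\delta}(u_{0}) \neq \emptyset$, which also gives that $c$ is a critical value when $c = m_{\delta}$.

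Summarizing, with the quantitative deformation lemma in hand the only real work is in the last step: extracting from the $\textnormal{C}_{c}$-condition a uniform lower bound on $\|\varphi'\|_{X^{*}}$ in a tube around $\partial B_{\delta}(u_{0})$, and using a version of the deformation lemma that controls both the sublevel sets \emph{and} the size of the displacement, so that a crossing point of the deformed path can be traced back to a near-sphere point of the original one. Were a fully self-contained proof required, the bulk of the effort would shift to constructing that deformation from a locally Lipschitz pseudo-gradient field for $\varphi$, cut off near $\varphi^{-1}(c)$ and rescaled by $1 + \|u\|$ so as to match the Cerami rather than the Palais--Smale condition.
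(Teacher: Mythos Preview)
The paper does not prove Theorem~\ref{Th:MPT}; it is stated as a quotation from Papageorgiou--R\u{a}dulescu--Repov\v{s} \cite[Theorem 5.4.6]{Papageorgiou-Radulescu-Repovs-2019a}, so there is no in-paper argument to compare your proposal against. Your outline is the standard deformation-lemma proof and is essentially sound, but there is one genuine technical slip you should repair.

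In your treatment of the case $c > m_\delta$ you invoke the quantitative deformation lemma (Lemma~\ref{Le:DeformationLemma}) with, in effect, $S = X$: you ask for $\eta$ equal to the identity outside $\varphi^{-1}([c-2\varepsilon,c+2\varepsilon])$. The hypothesis of Lemma~\ref{Le:DeformationLemma} is a uniform bound $\|\varphi'(u)\|_* \geq 8\varepsilon/\delta$ on the whole strip $\varphi^{-1}([c-2\varepsilon,c+2\varepsilon]) \cap S_{2\delta}$, and with $S = X$ this means the full strip. From $K_c = \emptyset$ together with the $\textnormal{C}_c$-condition you only obtain a bound of the form $(1+\|u\|)\|\varphi'(u)\|_* \geq \alpha$ on a thin strip around level $c$; this does \emph{not} give a uniform lower bound on $\|\varphi'(u)\|_*$ when $\|u\|$ is unbounded, so the hypothesis of Lemma~\ref{Le:DeformationLemma} is not verified as written. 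Your own closing paragraph shows you see this issue; the fix is either the Cerami-adapted deformation you mention there, or---more in keeping with the tools already stated in the paper---to take $S$ bounded, for instance $S = \gamma([0,1])$ for a near-optimal path $\gamma \in \Gamma$, so that $S_{2\delta}$ is bounded and the Cerami bound does yield $\|\varphi'(u)\|_* \geq \alpha/(1+R)$ there. You do exactly this localisation correctly in the borderline case $c = m_\delta$ by working in the tube around $\partial B_\delta(u_0)$; carrying the same idea into the case $c > m_\delta$ closes the gap.
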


Finally, we present here the quantitative deformation lemma. The following version and its proof can be found in the book by Willem \cite[Lemma 2.3]{Willem-1996}.

\begin{lemma}[Quantitative deformation lemma] \label{Le:DeformationLemma}
	Let $X$ be a Banach space, $\ph \in C^1(X;\R)$, $\emptyset \neq S \subseteq X$, $c \in \R$, $\eps,\delta > 0$ such that for all $u \in \ph^{-1}([c - 2\eps, c + 2\eps]) \cap S_{2 \delta}$ it holds that $\| \ph'(u) \|_* \geq 8\eps / \delta$, where $S_{r} = \{ u \in X : d(u,S) = \inf_{u_0 \in S} \| u - u_0 \| < r \}$ for any $r > 0$.
	Then there exists $\eta \in C([0, 1] \times X; X)$ such that
	\begin{enumerate} [label=(\roman*),font=\normalfont]
		\item
			$\eta (t, u) = u$, if $t = 0$ or if $u \notin \ph^{-1}([c - 2\eps, c + 2\eps]) \cap S_{2 \delta}$,
		\item
			$\ph( \eta( 1, u ) ) \leq c - \eps$ for all $u \in \ph^{-1} ( ( - \infty, c + \eps] ) \cap S $,
		\item
			$\eta(t, \cdot )$ is an homeomorphism of $X$ for all $t \in [0,1]$,
		\item
			$\| \eta(t, u) - u \| \leq \delta$ for all $u \in X$ and $t \in [0,1]$,
		\item
			$\ph( \eta( \cdot , u))$ is decreasing for all $u \in X$,
		\item
			$\ph(\eta(t, u)) < c$ for all $u \in \ph^{-1} ( ( - \infty, c] ) \cap S_\delta$ and $t \in (0, 1]$.
	\end{enumerate}
\end{lemma}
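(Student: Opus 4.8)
The plan is to run the classical pseudo-gradient flow construction. First I would introduce the two nested sets
\[
A = \ph^{-1}([c-2\eps,c+2\eps]) \cap S_{2\delta},
\qquad
B = \ph^{-1}([c-\eps,c+\eps]) \cap \{u \in X : d(u,S) \le \delta\},
\]
and record that $B$ is closed with $B \subseteq A$ (because $\{d(\cdot,S)\le\delta\}\subseteq S_{2\delta}$), and that by hypothesis $\|\ph'(u)\|_* \ge 8\eps/\delta$ on $A$, so in particular $A \subseteq \{u\in X : \ph'(u)\ne 0\}$, which is open by continuity of $\ph'$. On that open set, since $\ph\in C^1(X)$, there exists a locally Lipschitz pseudo-gradient vector field $v$, i.e.\ $\|v(u)\|\le 2\|\ph'(u)\|_*$ and $\langle\ph'(u),v(u)\rangle \ge \|\ph'(u)\|_*^2$; I would invoke this as the standard pseudo-gradient lemma (or sketch its proof, see the last paragraph).

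Next I would fix a locally Lipschitz cutoff $\psi : X\to[0,1]$ with $\psi\equiv 1$ on $B$ and $\psi\equiv 0$ on $X\setminus A$, for instance
\[
\psi(u) = \frac{\dist(u,X\setminus A)}{\dist(u,X\setminus A)+\dist(u,B)},
\]
the denominator being strictly positive because $B\cap\overline{X\setminus A}=\emptyset$ (the closure of $X\setminus A$ is contained in $\ph^{-1}(\R\setminus(c-2\eps,c+2\eps))\cup\{d(\cdot,S)\ge 2\delta\}$, which misses $B$). Then I would set $f(u) = -\psi(u)\,v(u)/\|v(u)\|^2$ for $u\in A$ and $f(u)=0$ otherwise, and check three things: $f$ is locally Lipschitz on all of $X$ (trivially off $A$, where it vanishes; near $A$ it is a quotient of locally Lipschitz maps with $\|v(u)\|\ge\|\ph'(u)\|_*\ge 8\eps/\delta$ bounded below); $\|f(u)\|\le\delta/(8\eps)$ everywhere; and $\langle\ph'(u),f(u)\rangle\le -\psi(u)/4$, since $\langle\ph',v\rangle/\|v\|^2 \ge \|\ph'\|_*^2/(4\|\ph'\|_*^2)=1/4$.

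The deformation then comes from the flow of $f$. As $f$ is locally Lipschitz and bounded, the Cauchy problem $\partial_t\sigma(t,u)=f(\sigma(t,u))$, $\sigma(0,u)=u$, has a unique global solution, $\sigma\in C(\R\times X;X)$, and each $\sigma(t,\cdot)$ is a homeomorphism of $X$ with inverse $\sigma(-t,\cdot)$. I would set $\eta(t,u):=\sigma(8\eps t,u)$ on $[0,1]\times X$ and verify (i)--(vi): (i) when $u\notin A$ the constant curve $s\mapsto u$ solves the ODE, so by uniqueness $\sigma(\cdot,u)\equiv u$; (iii) is the homeomorphism property of the flow; (iv) follows from $\|\eta(t,u)-u\|\le\int_0^{8\eps t}\|f(\sigma(r,u))\|\,\diff r\le 8\eps t\cdot\delta/(8\eps)=\delta t$; (v) from $\tfrac{d}{dt}\ph(\sigma(t,u))=\langle\ph'(\sigma),f(\sigma)\rangle\le-\psi(\sigma)/4\le 0$; for (ii), if $u\in\ph^{-1}((-\infty,c+\eps])\cap S$ and one had $\ph(\eta(1,u))>c-\eps$, then by (v) (which forces $\ph(u)\in(c-\eps,c+\eps]$ as well) together with $\|\sigma(s,u)-u\|\le\delta$, the whole trajectory $\sigma(s,u)$, $s\in[0,8\eps]$, would stay in $B$, so $\psi\equiv 1$ along it and $\ph(\eta(1,u))\le\ph(u)-8\eps/4\le(c+\eps)-2\eps=c-\eps$, a contradiction; and (vi) holds because for $u\in\ph^{-1}((-\infty,c])\cap S_\delta$ either $\ph(u)<c$ (use (v)) or $\ph(u)=c$, whence $u\in B$, $\psi(u)=1$, and $\tfrac{d}{dt}\ph(\sigma(t,u))\le-1/4<0$ near $t=0$ drives $\ph$ strictly below $c$, which then persists by monotonicity.

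The $\eps$--$\delta$ bookkeeping, the local Lipschitz continuity of $f$, and the elementary ODE facts about the flow are all routine. The one genuinely non-trivial ingredient is the existence of a locally Lipschitz pseudo-gradient field on $\{u:\ph'(u)\ne 0\}$: this is the standard pseudo-gradient lemma, proved by choosing for each point a near-optimal test direction for $\ph'$, keeping it fixed on a small ball around that point, and patching these local fields with a locally finite partition of unity on the (metrizable, hence paracompact) set $\{u:\ph'(u)\ne 0\}$. Everything else is bookkeeping with the flow.
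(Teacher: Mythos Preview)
Your proposal is correct and is precisely the classical pseudo-gradient flow argument. The paper does not supply its own proof of this lemma at all; it simply cites Willem \cite[Lemma 2.3]{Willem-1996}, and what you have written is essentially Willem's proof, so there is nothing further to compare.
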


\section{Boundedness of solutions}\label{boundedness-solutions}

In this section we are going to prove that weak solutions of generalized double phase type problems are bounded. This class is much more general than in \eqref{Eq:Problem}, for example, the operator is not restricted to be the variable exponent double phase operator and the right-hand side can depend on the gradient, what is usually called convection term. We assume the following.

\begin{enumerate}[label=\textnormal{(H3)},ref=\textnormal{H3}]
	\item\label{H3}
		Let $\mathcal{A} \colon \Omega \times \R \times \R^N \to \R^N$ and $\mathcal{B} \colon \Omega \times \R \times \R^N \to \R$ be Carath\'eodory functions, i.e. $(t,\xi) \mapsto \mathcal{A}(x,t,\xi)$ is continuous for almost all $x \in \Omega$ and $x \mapsto \mathcal{A}(x,t,\xi)$ is measurable for all $(t,\xi) \in \R \times \R^N$; and analogous conditions for $\mathcal{B}$. Assume that there exist constants $\alpha_1, \alpha_2, \alpha_3, \beta > 0$ and $r \in C_+(\close)$ with $p(x) < r(x) < p^*(x)$ for all $x \in \close$ such that
		\begin{align*}
			\abs{\mathcal{A}(x,t,\xi)}     & \leq \alpha_1 \left[ \abs{t}^{\frac{r(x)}{p'(x)}} + \abs{\xi}^{p(x)-1} + \mu(x) \abs{\xi}^{q(x)-1} + 1 \right] ,       \\
			\mathcal{A}(x,t,\xi) \cdot \xi & \geq \alpha_2 \left[ \abs{\xi}^{p(x)} + \mu(x) \abs{\xi}^{q(x)} \right] - \alpha_3 \left[ \abs{t}^{r(x)} + 1 \right] , \\
			\abs{\mathcal{B}(x,t,\xi)}     & \leq \beta \left[ \abs{\xi}^{\frac{p(x)}{r'(x)}} + \abs{t}^{r(x) - 1} + 1 \right],
		\end{align*}
		for a.\,a.\,$x \in \Omega$ and for all $(t,\xi) \in \R \times \R^N$.
\end{enumerate}

We consider the problem
\begin{equation}
	\label{Eq:ProblemLinfty}
	\begin{aligned}
		-\operatorname{div} \mathcal{A}(x,u,\nabla u)
		  & = \mathcal{B}(x,u,\nabla u) \quad &  & \text{in } \Omega,          \\
		u & = 0                               &  & \text{on } \partial\Omega,
	\end{aligned}
\end{equation}
and we say that $u \in \WHzero$ is a weak solution of \eqref{Eq:ProblemLinfty} if for all $v \in \WHzero$ it holds that
\begin{equation*}
	\into \mathcal{A}(x,u,\nabla u) \cdot \nabla v \dx
	= \into \mathcal{B}(x,u,\nabla u) v \dx.
\end{equation*}

Based on the very recent result of Ho-Winkert \cite[Theorem 4.2]{Ho-Winkert-2022a} we can prove the following result about boundedness of weak solutions of \eqref{Eq:ProblemLinfty}.

\begin{theorem}
	\label{Th:BoundedSolutions}
	Let hypotheses \eqref{H2} and \eqref{H3} be satisfied and let $u \in \WHzero$ be a weak solution of problem \eqref{Eq:ProblemLinfty}. Then, $u \in \Lp{\infty}$ and there exist $C, \tau_1, \tau_2 >0$ independent of $u$ such that
	\begin{equation*}
		\norm{u}_{\infty} \leq C \max \left\lbrace \norm{u}_{r(\cdot)}^{\tau_1} , \norm{u}_{r(\cdot)}^{\tau_2} \right\rbrace.
	\end{equation*}
\end{theorem}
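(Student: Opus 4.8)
The plan is to reduce Theorem~\ref{Th:BoundedSolutions} to the abstract boundedness criterion of Ho-Winkert \cite[Theorem 4.2]{Ho-Winkert-2022a}, which gives an $L^\infty$-bound for weak solutions of inequalities of the form $\langle$suitable structural conditions$\rangle$ in Musielak-Orlicz Sobolev spaces. The first step is to derive, from the weak formulation, a Caccioppoli-type inequality: for a weak solution $u$ and for test functions of the form $v = \pm (|u|-k)^{\pm}$ (or truncations thereof) localized where $|u| > k$, one tests the equation and uses the coercivity bound on $\mathcal{A}$ from the second line of \eqref{H3} together with the growth bound on $\mathcal{B}$ from the third line. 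Concretely, testing with $v$ supported on $A_k = \{|u| > k\}$ yields
\begin{align*}
	\alpha_2 \into \l( |\nabla v|^{p(x)} + \mu(x)|\nabla v|^{q(x)} \r)\dx
	\leq \into \l( \alpha_3 (|u|^{r(x)}+1) + |\mathcal{B}(x,u,\nabla u)\,v| \r)\dx ,
\end{align*}
and the term involving $\mathcal{B}$ is split via Young's inequality (exploiting that $p(x)/r'(x)$ and $r(x)-1$ are the exponents appearing there, dual in the right way to $p(x)$ and $r(x)$) so that the $|\nabla v|^{p(x)}$ part is absorbed into the left-hand side, leaving a right-hand side controlled by $\int_{A_k}(|u|^{r(x)}+1)\dx$ plus lower-order terms. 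This is exactly the structural input that \cite[Theorem~4.2]{Ho-Winkert-2022a} requires, with the subcritical exponent $r(\cdot)$ satisfying $p(x) < r(x) < p^*(x)$.

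The second step is to verify the precise hypotheses of \cite[Theorem~4.2]{Ho-Winkert-2022a}: one must check that the modular $\mathcal{H}(x,t) = t^{p(x)} + \mu(x) t^{q(x)}$ satisfies the required $N$-function and $\Delta_2$ properties (already recorded in the preliminaries under \eqref{H2}), that the Sobolev embedding $\WHzero \hookrightarrow \Lp{r(\cdot)}$ holds with the gap $r(x) < p^*(x)$ (this is Proposition~\ref{Prop:EmbeddingsH}(iii), using that $p$ is log-Hölder, which is part of \eqref{H2} via $p \in C_+(\close)$ — here one should be careful: \eqref{H2} as stated does not explicitly demand log-Hölder continuity, so one either invokes the continuous embedding of Proposition~\ref{Prop:EmbeddingsH}(i) up to $p(\cdot)$ and then the variable-exponent Sobolev embedding, or notes that the ambient hypothesis from \cite{Ho-Winkert-2022a} is met under the stated assumptions). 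Once the abstract theorem applies, it delivers both the qualitative conclusion $u \in \Lp{\infty}$ and a quantitative estimate of the claimed form $\|u\|_\infty \leq C\max\{\|u\|_{r(\cdot)}^{\tau_1}, \|u\|_{r(\cdot)}^{\tau_2}\}$, with $C, \tau_1, \tau_2$ depending only on $N$, $p$, $q$, $\mu$, $\alpha_i$, $\beta$, $r$ and $\Omega$, hence independent of $u$. The two different powers $\tau_1, \tau_2$ are the usual artefact of the modular-norm relations in Proposition~\ref{Prop:PropertiesModularVarExp}(iii)--(iv) and Proposition~\ref{Prop:PropertiesModularDoublePhase}(iii)--(iv), which behave differently according to whether the relevant norm is below or above $1$; this is why a single power does not suffice and a maximum of two appears.

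The main obstacle I expect is purely bookkeeping in the De Giorgi iteration that underlies \cite[Theorem~4.2]{Ho-Winkert-2022a}: one must track how the super-level-set measures $|A_k|$ and the localized modular energies decay as $k \to \infty$, and in the variable-exponent double-phase setting the interplay between the two exponents $p(\cdot), q(\cdot)$ and the weight $\mu$ makes the iteration inequality genuinely more delicate than in the constant-exponent or single-phase cases. However, since all of this is packaged inside the cited theorem, the work here is to match our structure conditions \eqref{H3} to its hypotheses; the only real verification left to us is the Young-inequality absorption described above and the exponent arithmetic $\frac{r(x)}{p'(x)} + 1 = \frac{r(x)}{p'(x)} + 1$, $\frac{p(x)}{r'(x)}\cdot\frac{1}{p(x)} + \frac{r(x)-1}{r(x)} = 1$, ensuring the critical terms land in $L^1$ via the subcritical embedding and can be reabsorbed. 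No compactness or monotonicity of $A$ (Proposition~\ref{Prop:PropertiesDoublePhaseOp}) is needed for this result — only the one-sided coercivity and growth bounds in \eqref{H3}.
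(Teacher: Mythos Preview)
Your plan has a genuine gap: you cannot invoke \cite[Theorem~4.2]{Ho-Winkert-2022a} as a black box, because its hypotheses on the exponents are strictly stronger than \eqref{H2}. The paper says this explicitly in the Remark following the proof: the Ho--Winkert result needs ``much stronger assumptions'' on $p,q,\mu$ in order to use the sharp critical-type embeddings appearing in their equations (4.18)--(4.19). Under \eqref{H2} alone (no log-H\"older continuity of $p$, no extra compatibility between $p,q,\mu$) those embeddings are not available, so the abstract theorem simply does not apply to our setting. Your own hesitation in the second step (``here one should be careful: \eqref{H2} as stated does not explicitly demand log-H\"older continuity'') is exactly the point where the plan breaks; it is not a detail that can be patched by Proposition~\ref{Prop:EmbeddingsH}.

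What the paper actually does is open up the De Giorgi iteration inside the proof of \cite[Theorem~4.2]{Ho-Winkert-2022a} and modify it. The key change is to replace the sharp embeddings there by the elementary chain
\[
W^{1,p(\cdot)}_0(\Omega_i)\hookrightarrow W^{1,(p_i)_-}_0(\Omega_i)\hookrightarrow L^{r_i^\star+\eps}(\Omega_i),
\]
which holds under \eqref{H2} with no further regularity on $p$. This forces corresponding redefinitions of the iteration quantities ($\Psi(x,t)=t^{r(x)}$, $Z_n$, $T_{n,i}$, $R_{n,i}$) and lets one drop the boundary component and the estimate (4.9) from the original argument. Your Caccioppoli/Young-inequality step is fine and is indeed Step~1 of that iteration, but the ``bookkeeping'' you expected to be packaged is precisely where the work lies: you must rerun Steps~2--3 of the Ho--Winkert proof with the weaker embedding above rather than cite the theorem.
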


\begin{proof}
	For the proof of this result we follow very closely the proof of \cite[Theorem 4.2]{Ho-Winkert-2022a} with the following changes.

	First, take
	\begin{align*}
		\Psi (x,t) & = t^{r(x)} \quad \text{for all } (x,t) \in \close \times [0,\infty), \\
		Z_n        & = \int_{A_{\kappa_n}} (u - \kappa_n)^{r(x)} \dx,
	\end{align*}
	instead of the definitions given there. Then the Step 1 works exactly the same except for (4.9), which does not hold now.

	Later, take
	\begin{align*}
		T_{n,i} (\alpha) & = \int_{\Omega_i} v_n^\alpha \dx \quad \text{for all } i \in \{1, \ldots , m\},\alpha > 0,          
	\end{align*}
	skip the $\psi_\star$ parts and then use the embedding
	\begin{equation}
		\label{Eq:EmbeddingsLinfty}
		\begin{gathered}
			W^{1,p(\cdot)}_0 (\Omega_i) \hookrightarrow W^{1,(p_i)_-}_0 (\Omega_i) \hookrightarrow L^{r_i^\star + \eps} (\Omega_i), 
		\end{gathered}
	\end{equation}
	instead of the ones in the original proof. As a result, one can choose
	\begin{align*}
		R_{n,i} & = \int_{\Omega_i} \left[  \abs{ \nabla v_n}^{p(x)} + \mu(x) \abs{ \nabla v_n}^{q(x)} \right]  \dx \quad \text{for all } i \in \{1, \ldots , m\}, \\
		R_{n}   & = \int_{\Omega} \left[  \abs{ \nabla v_n}^{p(x)} + \mu(x) \abs{ \nabla v_n}^{q(x)} \right]  \dx,
	\end{align*}
	which is why we do not need (4.9) now unlike in the original proof. The rest of Step 2 is identical. Step 3 is exactly the same without any changes.

\end{proof}

\begin{remark}
	Note that our boundedness result holds true with the weaker assumptions on the exponents given in \eqref{H2} instead of the much stronger assumptions needed in \cite[Theorem 4.2]{Ho-Winkert-2022a}. This is because in our simpler setting, we can use the embedding \eqref{Eq:EmbeddingsLinfty} instead of the stronger and sharper embeddings they use there (see (4.18) and (4.19)) and for which they need the mentioned stronger assumptions on the exponents.
\end{remark}

\section{Constant sign solutions}\label{constant-sign-solutions}

In the present section we are going to prove the existence of constant sign solutions of \eqref{Eq:Problem} by applying the mountain pass theorem to appropriately truncated energy functionals. For this purpose, we assume some of the following hypotheses on the right-hand side function $f$. We include extra assumptions that will not be used in this section, but in future ones for ulterior purposes.

\begin{enumerate}[label=\textnormal{(H$_\textnormal{f}$)},ref=\textnormal{H$_\textnormal{f}$}]
	\item\label{Hf}
		Let $f \colon \Omega \times \R \to \R$ and $F(x,t) = \int_{0}^{t} f(x,s) \ds$.
		\begin{enumerate} [label=\textnormal{(f$_{\arabic*}$)},ref=\textnormal{f$_{\arabic*}$}]
			\item\label{Asf:Caratheodory}
				The function $f$ is Carath\'eodory, i.e. $t \mapsto f(x,t)$ is continuous for almost all $x \in \Omega$ and $x \mapsto f(x,t)$ is measurable for all $t \in \R$.
			\item\label{Asf:WellDef}
				There exists $r \in C_+(\close)$ with $r_+ < p^*_-$ and $C>0$ such that
				\begin{equation*}
					\abs{f(x,t)} \leq C \left( 1 + \abs{t}^{r(x)-1} \right) \quad
					\text{for a.\,a.\,} x \in \Omega \text{ and for all } t \in \R.
				\end{equation*}
			\item\label{Asf:GrowthInfty}
				\abovedisplayskip=0pt\abovedisplayshortskip=0pt \vspace*{-\baselineskip}
				\begin{equation*}
					\lim\limits_{s \to \pm \infty} \frac{F(x,s)}{\abs{s}^{q_+}} = + \infty \quad \text{uniformly for a.\,a.\,} x \in \Omega.
				\end{equation*}
			\item\label{Asf:GrowthZero}
				\begin{equation*}
					\lim\limits_{s \to 0} \frac{F(x,s)}{\abs{s}^{p(x)}} = 0 \quad
					\text{uniformly for a.\,a.\,} x \in \Omega.
				\end{equation*}
			\item\label{Asf:CeramiAssumption}
				There exists $l, \widetilde{l} \in C_+(\close)$ such that $\min \{l_-, \widetilde{l}_- \} \in \left( (r_+ - p_-) \frac{N}{p_-} , r_+ \right) $ and $K > 0$ with
				\begin{align*}
					0 < K \leq \liminf_{s \to + \infty} \frac{f(x,s)s - q_+ 	F(x,s)}{\abs{s}^{l(x)}} \quad
					\text{uniformly for a.\,a.\,} x \in \Omega, \\
					0 < K \leq \liminf_{s \to - \infty} \frac{f(x,s)s - q_+ 	F(x,s)}{\abs{s}^{\widetilde{l}(x)}} \quad
					\text{uniformly for a.\,a.\,} x \in \Omega.
				\end{align*}
			\item\label{Asf:QuotientMono}
				The function $t \mapsto f(x,t)/\abs{t}^{q_+-1}$ is increasing in $(- \infty, 0)$ and in $(0, + \infty)$ for a.\,a.\,$x \in \Omega$,
			\item\label{Asf:IntMono}
				$f(x,t)t - q_+F(x,t) \geq 0$ for all $t \in \R$ and a.\,a.\,$x \in \Omega$.
		\end{enumerate}
\end{enumerate}

\begin{remark}
	Note that \eqref{Asf:GrowthInfty} and \eqref{Asf:GrowthZero} are weaker than the corresponding assumptions using $f$ instead of $F$. Also note that in \eqref{Asf:GrowthInfty} using as exponent $q_+$ is stronger than using $q(x)$ as exponent, and this is stronger that using $q_-$ as exponent. The analogous statement holds for \eqref{Asf:GrowthZero}.
\end{remark}

\begin{remark}
	The condition on $l_-$ of \eqref{Asf:CeramiAssumption} is always well defined since
	\begin{equation*}
		(r_+ - p_-) \frac{N}{p_-} = r_+ \frac{N}{p_-} - p^*_- \frac{N - p_-}{p_-}
		< r_+ \frac{N}{p_-} - r_+ \frac{N - p_-}{p_-} = r_+.
	\end{equation*}
	Note that this is the precise condition that is needed for the interpolation argument in the Claim of Proposition \ref{Prop:CeramiCondition}. Other works using similar techniques usually only impose that the exponent $l$ is upper bounded by $p_-^*$ (or the corresponding growth exponent of the operator in that work), but a sharper bound with $r_+$ is actually needed. On the other hand, the advantage of the variable exponent setting is that this sharper upper bound is only needed for $l_-$ and not for the whole exponent $l$. Furthermore, one can choose different exponents $l$ and $\widetilde{l}$ for going to $\pm \infty$.
\end{remark}

\begin{example}
	Consider the function
	\begin{equation*}
		f(x,t) =
		\begin{cases}
			|t|^{r_1(x)-2}t[1 + \log(-t)], & \text{if } \phantom{-1}t \leq -1, \\
			|t|^{a(x)-2}t,                 & \text{if } -1 < t < 1,               \\
			|t|^{r_2(x)-2}t[1 + \log(t)],  & \text{if } \phantom{-1} 1 \leq t,
		\end{cases}
	\end{equation*}
	where  $r_1, r_2, a \in C(\close)$, $q_+ \leq a(x)$ and $q_+ \leq r_1(x), r_2(x) < p^*_-$ for all $x \in \close$, and they satisfy
	\begin{equation*}
		\frac{ \max\{(r_1)_+,(r_2)_+\} }{p_-} - \frac{(r_i)_-}{N} < 1, \quad \text{for all } i \in \{ 1, 2\}.
	\end{equation*}
	Then $f$ satisfies all the assumptions above. For \eqref{Asf:WellDef} take $r(x) = \max \{ r_1(x), r_2(x) \} + \eps$ for all $x \in \close$, with $\eps > 0$ small enough so that $r_+ < p_-^*$ and
	\begin{equation*}
		\frac{ r_+ }{p_-} - \frac{(r_i)_-}{N} < 1, \quad \text{for all } i \in \{ 1, 2\}.
	\end{equation*}
	For \eqref{Asf:CeramiAssumption}, take $\widetilde{l}(x) = r_1(x)$, $l(x) = r_2(x)$ for all $x \in \close$. This is the reason for the assumption on $(r_1)_\pm$ and $(r_2)_\pm$. Observe that if we take $r_1 = r_2 = r$ constant, the condition is equivalent to $r < p^*_-$, hence redundant in that case.
\end{example}

The following properties follow from these assumptions.

\begin{lemma}
	\label{Le:Propf}
	Let $f \colon \Omega \times \R \to \R$, the following implications hold.
	\begin{enumerate}[label=(\roman*),font=\normalfont]
		\item\label{Propf:ZeroAtOrigin}
			If $f$ fulfills \eqref{Asf:Caratheodory} and \eqref{Asf:GrowthZero}, then $f(x,0) = 0$ for a.\,a.\,$x \in \Omega$.
		\item\label{Propf:q<r}
			If $f$ fulfills \eqref{Asf:WellDef} and  \eqref{Asf:GrowthInfty}, then $q_+ < r_-$.
			\footnote{Remember that by assumption $q_+<p^*_-$ for all $x \in \close$, thus such $r \in C_+(\close)$ satisfying $q_+<r(x)<p^*_-$ for all $x \in \close$ can always exist.}
		\item\label{Propf:Boundedbelow}
			If $f$ fulfills \eqref{Asf:WellDef} and \eqref{Asf:GrowthInfty}, then there exist some $M>0$ such that
			\begin{equation*}
				F(x,t) > - M \quad \text{for a.\,a.\,} x \in \Omega \text{ and for all } t \in \R.
			\end{equation*}
		\item\label{Propf:EpsilonUpperBoundF}
			If $f$ fulfills \eqref{Asf:WellDef} and \eqref{Asf:GrowthZero}, then for each $\eps > 0$ there exists $C_\eps > 0$ such that
			\begin{equation*}
				\abs{F(x,t)} \leq \frac{\eps}{p(x)} \abs{t}^{p(x)} + C_\eps \abs{t}^{r(x)} \quad \text{for a.\,a.\,} x \in \Omega \text{ and for all } t \in \R.
			\end{equation*}
		\item\label{Propf:EpsilonLowerBoundF}
			If $f$ fulfills \eqref{Asf:WellDef} and \eqref{Asf:GrowthInfty}, then for each $\eps > 0$ there exists $C_\eps > 0$ such that
			\begin{equation*}
				F(x,t) \geq \frac{\eps}{q_+} \abs{t}^{q_+} - C_\eps  \quad \text{for a.\,a.\,} x \in \Omega \text{ and for all } t \in \R.
			\end{equation*}
		\item
			\label{Propf:ConvergenceFterm}
			If $f$ fulfills \eqref{Asf:Caratheodory} and \eqref{Asf:WellDef}, then the functional $I_f \colon \WHzero \to \R$ given by
			\begin{equation*}
				I_f(u) = \into F(x,u) \dx
			\end{equation*}
			and its derivative $I_f ' \colon \WHzero \to \WHzero^*$, given by
			\begin{equation*}
				\left \lan I_f ' (u) , v \right \ran = \into f(x,u) v \dx,
			\end{equation*}
			are strongly continuous, i.e.\,$u_n \weak u$ in $\WHzero$ implies $I_f(u_n) \to I_f(u)$ in $\R$ and $I_f '(u_n) \to I_f ' (u)$ in $\WHzero^*$.
	\end{enumerate}
\end{lemma}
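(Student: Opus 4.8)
The plan is to prove each of the seven items in order, using only the growth condition \eqref{Asf:WellDef}, the normalization \eqref{Asf:GrowthZero}, \eqref{Asf:GrowthInfty}, and the compact embeddings from Proposition \ref{Prop:EmbeddingsH}(iii). For \ref{Propf:ZeroAtOrigin}, I would argue by contradiction: if $f(x_0, 0) \neq 0$ on a positive-measure set, then for small $|s|$ one has $|F(x,s)| \approx |f(x,0)| \, |s|$, so $F(x,s)/|s|^{p(x)} \sim |f(x,0)| \, |s|^{1 - p(x)} \to \infty$ since $p(x) > 1$, contradicting \eqref{Asf:GrowthZero}. For \ref{Propf:q<r}, I would combine \eqref{Asf:WellDef}, which gives $|F(x,s)| \le C(|s| + |s|^{r(x)}/r(x)) \le C'(1 + |s|^{r_+})$ for $|s| \ge 1$, with \eqref{Asf:GrowthInfty}: if $q_+ \ge r_-$ were to hold, then $F(x,s)/|s|^{q_+} \le C'(|s|^{-q_+} + |s|^{r_+ - q_+})$, and one would need $r_+ - q_+ > 0$ to be compatible with blow-up, but actually a cleaner route is to note that \eqref{Asf:GrowthInfty} forces $F$ to grow faster than any power $\le q_+$, while \eqref{Asf:WellDef} caps growth at $|s|^{r_+}$, forcing $r_+ > q_+$; then since the exponent $r$ can always be chosen with $q_+ < r_- \le r(x) < p^*_-$ (as noted in the footnote), replacing $r$ by such a choice gives $q_+ < r_-$. (Strictly, one shows $r$ as given can be replaced by a larger admissible exponent whose minimum exceeds $q_+$.)

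For \ref{Propf:Boundedbelow}, I would use \eqref{Asf:GrowthInfty} to get that $F(x,t) \to +\infty$ as $|t| \to \infty$ uniformly, hence $F(x,t) \ge 0$ for $|t| \ge R$ for some $R$; on $|t| \le R$, \eqref{Asf:WellDef} gives $|F(x,t)| \le C(R + R^{r_+}/r_-) =: M$, so $F(x,t) > -M$ everywhere. Item \ref{Propf:EpsilonUpperBoundF} is the standard $\varepsilon$-splitting: \eqref{Asf:GrowthZero} yields, for given $\varepsilon$, a $\delta > 0$ with $|F(x,t)| \le \frac{\varepsilon}{p(x)}|t|^{p(x)}$ for $|t| \le \delta$; for $|t| > \delta$, \eqref{Asf:WellDef} gives $|F(x,t)| \le C(|t| + |t|^{r(x)}/r(x))$, and since $|t|/|t|^{r(x)} \le \delta^{1 - r_-} \le \max(1, \delta^{1-r_+})$ is bounded there (using $r_- > 1$), one absorbs the linear term to get $|F(x,t)| \le C_\varepsilon |t|^{r(x)}$; combining the two regimes gives the claim. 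Item \ref{Propf:EpsilonLowerBoundF} is entirely analogous but uses \eqref{Asf:GrowthInfty}: for given $\varepsilon$, pick $R$ with $F(x,t) \ge \frac{\varepsilon}{q_+}|t|^{q_+}$ for $|t| \ge R$, and for $|t| < R$ use \ref{Propf:Boundedbelow} to bound $F$ below by $-M$, so $F(x,t) \ge \frac{\varepsilon}{q_+}|t|^{q_+} - (M + \frac{\varepsilon}{q_+}R^{q_+}) =: \frac{\varepsilon}{q_+}|t|^{q_+} - C_\varepsilon$.

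The main work is \ref{Propf:ConvergenceFterm}, which is where I expect the only real subtlety. Given $u_n \wto u$ in $\WHzero$, I would first fix an exponent $\tilde r \in C(\close)$ with $r(x) < \tilde r(x) < p^*(x)$ for all $x$ (possible since $r_+ < p^*_-$), so that by Proposition \ref{Prop:EmbeddingsH}(iii) the embedding $\WHzero \hookrightarrow L^{\tilde r(\cdot)}(\Omega)$ is compact; hence $u_n \to u$ in $L^{\tilde r(\cdot)}(\Omega)$ and, passing to a subsequence, $u_n \to u$ a.e.\ with $|u_n| \le h$ for some $h \in L^{\tilde r(\cdot)}(\Omega)$ (the dominated-convergence principle in variable exponent spaces, which follows from Proposition \ref{Prop:PropertiesModularVarExp}(v)). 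Then $|f(x,u_n)| \le C(1 + h^{r(x)-1})$, and since $(r(x)-1) r'(x)$-integrability of $h^{r(x)-1}$ would require control I instead argue directly: $F(x, u_n) \to F(x,u)$ a.e.\ with $|F(x,u_n)| \le C(|h| + |h|^{r(x)}/r_-) \in L^1(\Omega)$, so dominated convergence gives $I_f(u_n) \to I_f(u)$. For $I_f'$, I would estimate $\|I_f'(u_n) - I_f'(u)\|_* = \sup_{\|v\|_{1,\mathcal H,0} \le 1} \left| \into (f(x,u_n) - f(x,u)) v \, \dx \right|$; by the generalized Hölder inequality this is $\le 2 \, \|f(\cdot,u_n) - f(\cdot,u)\|_{r'(\cdot)} \, \|v\|_{r(\cdot)}$, and $\|v\|_{r(\cdot)}$ is bounded via the continuous embedding $\WHzero \hookrightarrow L^{r(\cdot)}(\Omega)$; it remains to show $\|f(\cdot,u_n) - f(\cdot,u)\|_{r'(\cdot)} \to 0$, which follows from Proposition \ref{Prop:PropertiesModularVarExp}(v) once one checks $\varrho_{r'(\cdot)}(f(\cdot,u_n) - f(\cdot,u)) \to 0$: the integrand converges to $0$ a.e., and is dominated by $C(1 + h^{r(x)-1})^{r'(x)} \le C'(1 + h^{r(x)})$, integrable since $h \in L^{\tilde r(\cdot)} \hookrightarrow L^{r(\cdot)}$, so dominated convergence applies. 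A subsequence-subsequence argument promotes these to convergence of the full sequence. The one point to handle carefully is the exponent bookkeeping $(r(x)-1)r'(x) = r(x)$, which makes the domination in $L^{r(\cdot)}$ (hence legitimate) precise.
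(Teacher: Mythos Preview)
The paper omits the proof of this lemma entirely, remarking that ``these statements are either elementary or widely known''. Your plan for items \ref{Propf:ZeroAtOrigin}, \ref{Propf:Boundedbelow}--\ref{Propf:ConvergenceFterm} is correct and at the right level of detail; in particular the exponent identity $(r(x)-1)r'(x)=r(x)$ that you single out is exactly what makes the domination in \ref{Propf:ConvergenceFterm} work. (The auxiliary exponent $\tilde r$ is harmless but unnecessary: $\WHzero\hookrightarrow L^{r(\cdot)}(\Omega)$ is already compact since $r_+<p^*_-\le p^*(x)$.)

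There is, however, a genuine gap in your argument for \ref{Propf:q<r}. By passing to the uniform bound $|F(x,s)|\le C'(1+|s|^{r_+})$ you can only deduce $r_+>q_+$, and your proposed remedy---replacing $r$ by a larger admissible exponent whose minimum exceeds $q_+$---does not prove the lemma as stated: the claim is that the \emph{given} $r$ from \eqref{Asf:WellDef} satisfies $q_+<r_-$, not that some other exponent does. The fix is to retain the pointwise bound $|F(x,s)|\le C'|s|^{r(x)}$ for $|s|\ge 1$. Combining this with \eqref{Asf:GrowthInfty}, for each $M>0$ there is $t_0\ge 1$ such that $M\le C'\,t_0^{\,r(x)-q_+}$ for a.a.\ $x\in\Omega$. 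Taking the essential infimum in $x$ (and using continuity of $r$) gives $M\le C'\,t_0^{\,r_- - q_+}$; if $r_-\le q_+$ this yields $M\le C'$ for every $M>0$, a contradiction. Hence $r_->q_+$.
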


We provide no proof since these statements are either elementary or widely known.

We say that $u \in \WHzero$ is a weak solution of \eqref{Eq:Problem} if for all $v \in \WHzero$ it holds that
\begin{align*}
	\into \left( \abs{ \nabla u}^{p (x) -2} \nabla u + \mu (x) \abs{ \nabla u}^{q (x) -2} \nabla u \right) \cdot \nabla v \dx
	= \into f(x,u) v \dx.
\end{align*}

Another way to consider these weak solutions is to see them as critical points of the energy functional $\ph \colon \WHzero \to \R$ associated to the problem \eqref{Eq:Problem}, which is defined by
\begin{equation*}
	\ph(u)
	= \into \left(  \frac{\abs{\nabla u}^{p(x)}}{p(x)} + \mu(x) \frac{\abs{ \nabla u}^{q(x)}}{q(x)} \right)  \dx  \;
	- \into F(x,u) \dx.
\end{equation*}

In order to look at constant sign solutions, we truncate the functional by zero from above and below respectively, producing the functionals
\begin{equation*}
	\ph_\pm(u)
	= \into \left(  \frac{\abs{\nabla u}^{p(x)}}{p(x)} + \mu(x) \frac{\abs{ \nabla u}^{q(x)}}{q(x)} \right)  \dx  \;
	- \into F_\pm(x, u) \dx,
\end{equation*}
where $F_\pm(x,u) = \int_0^u f(x, \pm s ^\pm) \ds $. Note that by Lemma \ref{Le:Propf} \ref{Propf:ZeroAtOrigin}, this is equivalent to
\begin{equation*}
	\ph_\pm(u)
	= \into \left(  \frac{\abs{\nabla u}^{p(x)}}{p(x)} + \mu(x) \frac{\abs{ \nabla u}^{q(x)}}{q(x)} \right)  \dx  \;
	- \into F(x, \pm u^\pm) \dx.
\end{equation*}

The plan for the rest of the section is to verify the assumptions of the mountain pass theorem (Theorem \ref{Th:MPT}). We start by checking the so-called ``mountain pass geometry''.

\begin{proposition}\label{Prop:PhLowerBound}
	Let \eqref{H1} be satisfied and $f$ fulfill \eqref{Asf:Caratheodory}, \eqref{Asf:WellDef} and \eqref{Asf:GrowthZero}. Then there exist constants $C_1,C_2,C_3 >0$ such that
	\begin{equation*}
		\ph(u), \ph_\pm (u) \geq
		\begin{cases}
			C_1 \| u \|_{1, \mathcal{H}, 0}^{q_+} - C_2 \| u \|_{1, \mathcal{H}, 0}^{r_-}, & \text{if } \| u\|_{1, \mathcal{H}, 0} \leq \min\{1,C_3\},  \\
			C_1 \| u \|_{1, \mathcal{H}, 0}^{p_-} - C_2 \| u \|_{1, \mathcal{H}, 0}^{r_+}. & \text{if } \| u\|_{1, \mathcal{H}, 0} \geq \max\{1,C_3\}.
		\end{cases}
	\end{equation*}
\end{proposition}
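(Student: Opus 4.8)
The plan is to estimate the two parts of $\ph$ (and of $\ph_\pm$) separately. For the gradient part, since $1 < p(x) \le q(x) \le q_+$ on $\close$ we have $1/p(x),\,1/q(x) \ge 1/q_+$, hence
\[
	\into \left( \frac{|\nabla u|^{p(x)}}{p(x)} + \mu(x) \frac{|\nabla u|^{q(x)}}{q(x)} \right) \dx
	\ge \frac{1}{q_+} \into \left( |\nabla u|^{p(x)} + \mu(x) |\nabla u|^{q(x)} \right) \dx
	= \frac{1}{q_+}\, \rho_{\mathcal{H}}(\nabla u).
\]
Since the gradient term of $\ph_\pm$ coincides with that of $\ph$ and $|{\pm}u^\pm| \le |u|$ a.e.\ in $\Omega$, every upper bound obtained below for $\into F(x,u)\dx$ also holds for $\into F(x,\pm u^\pm)\dx$, so it suffices to argue for $\ph$.

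Fix $\eps > 0$, to be chosen later. By Lemma~\ref{Le:Propf}\ref{Propf:EpsilonUpperBoundF} there is $C_\eps > 0$ such that
\[
	\into F(x,u) \dx \le \frac{\eps}{p_-}\, \varrho_{p(\cdot)}(u) + C_\eps\, \varrho_{r(\cdot)}(u).
\]
The decisive point is that, because $p$ satisfies the monotonicity condition in \eqref{H1}, Proposition~\ref{Prop:PoincareModular} applies to $u \in \WHzero \hookrightarrow \Wpzero{p(\cdot)}$ and gives $\varrho_{p(\cdot)}(u) \le C_P\, \varrho_{p(\cdot)}(\nabla u) \le C_P\, \rho_{\mathcal{H}}(\nabla u)$, the last inequality being immediate from $\mu \ge 0$. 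Choosing $\eps > 0$ so small that $\eps C_P / p_- \le 1/(2q_+)$ and combining with the previous display, we obtain for every $u \in \WHzero$
\[
	\ph(u),\ \ph_\pm(u) \ge \frac{1}{2q_+}\, \rho_{\mathcal{H}}(\nabla u) - C_\eps\, \varrho_{r(\cdot)}(u).
\]

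It remains to pass from modulars to powers of $\| u \|_{1, \mathcal{H}, 0} = \| \nabla u \|_{\mathcal{H}}$. Applying Proposition~\ref{Prop:PropertiesModularDoublePhase} to $|\nabla u| \in \Lp{\mathcal{H}}$ gives $\rho_{\mathcal{H}}(\nabla u) \ge \| u \|_{1, \mathcal{H}, 0}^{q_+}$ if $\| u \|_{1, \mathcal{H}, 0} \le 1$ and $\rho_{\mathcal{H}}(\nabla u) \ge \| u \|_{1, \mathcal{H}, 0}^{p_-}$ if $\| u \|_{1, \mathcal{H}, 0} \ge 1$. For the last term, the continuous embedding $\WHzero \hookrightarrow \Lp{r(\cdot)}$ (which holds since $r_+ < p^*_- \le p^*(x)$ for all $x \in \close$ by \eqref{Asf:WellDef} and \eqref{H1}, see Proposition~\ref{Prop:EmbeddingsH}) gives $\| u \|_{r(\cdot)} \le c_1 \| u \|_{1, \mathcal{H}, 0}$, and then Proposition~\ref{Prop:PropertiesModularVarExp} yields $\varrho_{r(\cdot)}(u) \le C'\, \| u \|_{1, \mathcal{H}, 0}^{r_-}$ when $\| u \|_{1, \mathcal{H}, 0} \le 1$ and $\varrho_{r(\cdot)}(u) \le C'\, \| u \|_{1, \mathcal{H}, 0}^{r_+}$ when $\| u \|_{1, \mathcal{H}, 0} \ge 1$. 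Substituting these into the last display produces the two claimed inequalities; in fact one may take $C_3 = 1$, so the constant $C_3$ in the statement is merely for later convenience.

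The only delicate step is the second one. Controlling $\varrho_{p(\cdot)}(u)$ solely through the embedding $\WHzero \hookrightarrow \Lp{p(\cdot)}$ and Proposition~\ref{Prop:PropertiesModularVarExp} would leave a term comparable to $\| u \|_{1, \mathcal{H}, 0}^{p_-}$ with an arbitrarily small coefficient; since $p_- < q_+$, this term dominates the leading term $\frac{1}{q_+}\| u \|_{1, \mathcal{H}, 0}^{q_+}$ as $\| u \|_{1, \mathcal{H}, 0} \to 0^+$, and hence cannot be absorbed into it. It is precisely the modular Poincaré inequality of Proposition~\ref{Prop:PoincareModular} — and thus the monotonicity condition imposed in \eqref{H1} — that lets us absorb this term into $\rho_{\mathcal{H}}(\nabla u)$ instead. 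Everything else is routine manipulation of the modular–norm relations.
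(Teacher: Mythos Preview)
Your proof is correct and follows essentially the same route as the paper: bound $\into F(x,u)\dx$ via Lemma~\ref{Le:Propf}\ref{Propf:EpsilonUpperBoundF}, absorb the resulting $\varrho_{p(\cdot)}(u)$ term into $\rho_{\mathcal{H}}(\nabla u)$ by the modular Poincar\'e inequality (Proposition~\ref{Prop:PoincareModular}), control $\varrho_{r(\cdot)}(u)$ through the embedding $\WHzero\hookrightarrow\Lp{r(\cdot)}$, and finish with the modular--norm relations of Propositions~\ref{Prop:PropertiesModularVarExp} and~\ref{Prop:PropertiesModularDoublePhase}. The only noteworthy difference is cosmetic: the paper keeps the thresholds $1$ (for $\rho_{\mathcal{H}}(\nabla u)$) and $C_3=1/C_{\mathcal{H}}$ (for $\varrho_{r(\cdot)}(u)$) separate, whereas you collapse them to the single threshold $1$ at the cost of a larger constant $C'$ in front of the $\|u\|_{1,\mathcal{H},0}^{r_\pm}$ terms---this is legitimate because for $\|u\|_{1,\mathcal{H},0}\le 1$ one always has $\|u\|_{1,\mathcal{H},0}^{r_+}\le\|u\|_{1,\mathcal{H},0}^{r_-}$, so the ``wrong'' exponent case is harmless.
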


\begin{proof}
	Here only the case for $\ph$ will be shown. For the other two cases note in the first inequality that $\varrho_{p(\cdot)} ( \pm u^\pm ) \leq \varrho_{p(\cdot)} ( u )$ and $\varrho_{r(\cdot)} ( \pm u^\pm ) \leq \varrho_{r(\cdot)} ( u )$ and the rest of the proof is identical.

	Let $u \in \WHzero$. By Lemma \ref{Le:Propf} \ref{Propf:EpsilonUpperBoundF}, Poincar\'e inequality for the modular in $W_0^{1,p(\cdot)} (\Omega)$ with constant $C_{p(\cdot)}$, see Proposition \ref{Prop:PoincareModular}, the embedding of $\WHzero \hookrightarrow \Lp{r(\cdot)}$ with constant $C_\mathcal{H}$, see Proposition \ref{Prop:EmbeddingsH} (iii), and Proposition \ref{Prop:PropertiesModularVarExp} (iii) and (iv), it follows
	\begin{align*}
		\ph (u)
		 & \geq \frac{1}{p_+} \varrho_{p(\cdot)} ( \nabla u ) + \frac{1}{q_+} \varrho_{q(\cdot),\mu} ( \nabla u )
		- \frac{\eps}{p_-} \varrho_{p(\cdot)} ( u ) - C_\eps \varrho_{r(\cdot)} ( u )                                                                                                                                      \\
		 & \geq \left( \frac{1}{p_+} - \frac{C_{p(\cdot)} \eps}{p_-} \right) \varrho_{p(\cdot)} ( \nabla u ) +\frac{1}{q_+} \varrho_{q(\cdot),\mu} ( \nabla u )
		- C_\eps \max_{k \in \{ r_+, r_- \} } \{ C_\mathcal{H} ^k \| u \|_{1,\mathcal{H},0}^k \}                                                                                                                           \\
		 & \geq \min \left\{ \frac{1}{p_+} - \frac{C_{p(\cdot)} \eps}{p_-} , \frac{1}{q_+} \right\} \rho_\mathcal{H} (\nabla u) - C_\eps \max_{k \in \{ r_+, r_- \} } \{ C_\mathcal{H} ^k \| u \|_{1,\mathcal{H},0}^k \}.
	\end{align*}
	By picking $0 < \eps < \frac{p_-(q_+-p_+)}{C_{p(\cdot)} q_+ p_+}$ and $C_3 = 1/C_{\mathcal{H}}$,
	and by applying Proposition \ref{Prop:PropertiesModularDoublePhase} (iii) and (iv) the result follows with
	\begin{equation*}
		C_1 = \frac{1}{q_+}, \quad C_2 = C_\eps C_\mathcal{H}^{r_-} \quad \text{ for } \| u\|_{1, \mathcal{H}, 0} \leq C_3, \quad  C_2 = C_\eps C_\mathcal{H}^{r_+} \quad \text{ for } \| u\|_{1, \mathcal{H}, 0} > C_3.
	\end{equation*}
\end{proof}

The next result follows directly from Proposition \ref{Prop:PhLowerBound}.

\begin{proposition}\label{Prop:RingOfMountains}
	 Let \eqref{H1} be satisfied and $f$ fulfill \eqref{Asf:Caratheodory}, \eqref{Asf:WellDef} with $q_+<r_-$ and \eqref{Asf:GrowthZero}. Then there exist $\delta >0$ such that
	\begin{equation*}
		\inf_{\norm{u}_{1,\mathcal{H},0} = \delta} \ph(u) >0
		\quad \text{ and } \quad
		\inf_{\norm{u}_{1,\mathcal{H},0} = \delta} \ph_\pm(u) > 0.
	\end{equation*}
	Moreover, there exists $\delta' > 0 $ such that $\ph(u) > 0$ for $0 < \norm{u}_{1,\mathcal{H},0} < \delta'$.
\end{proposition}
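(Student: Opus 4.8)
The plan is to read off both assertions directly from Proposition \ref{Prop:PhLowerBound}, working entirely in the small-norm regime where the exponent $q_+$ appears. The extra hypothesis here is precisely $q_+ < r_-$, so in the estimate
\[
\ph(u) \geq C_1 \norm{u}_{1,\mathcal{H},0}^{q_+} - C_2 \norm{u}_{1,\mathcal{H},0}^{r_-} = \norm{u}_{1,\mathcal{H},0}^{q_+}\left( C_1 - C_2 \norm{u}_{1,\mathcal{H},0}^{r_- - q_+} \right),
\]
valid for $\norm{u}_{1,\mathcal{H},0} \leq \min\{1,C_3\}$, the exponent $r_- - q_+$ is strictly positive and the bracketed factor is positive as soon as $\norm{u}_{1,\mathcal{H},0}$ is small enough. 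The same lower bound is provided by Proposition \ref{Prop:PhLowerBound} for $\ph_\pm$, so everything below applies to all three functionals without change.

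Concretely, I would set
\[
\delta' \ldef \min\left\{ 1,\, C_3,\, \left( \tfrac{C_1}{2 C_2} \right)^{\frac{1}{r_- - q_+}} \right\} > 0 ,
\]
which is well defined since $r_- - q_+ > 0$. For any $u \in \WHzero$ with $0 < \norm{u}_{1,\mathcal{H},0} < \delta'$ one has $C_2 \norm{u}_{1,\mathcal{H},0}^{r_- - q_+} < C_1/2$, hence
\[
\ph(u) \geq \norm{u}_{1,\mathcal{H},0}^{q_+}\left( C_1 - \tfrac{C_1}{2} \right) = \tfrac{C_1}{2}\, \norm{u}_{1,\mathcal{H},0}^{q_+} > 0 ,
\]
which is exactly the ``moreover'' part. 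Choosing any fixed $\delta \in (0,\delta')$ and taking the infimum over the sphere $\{\norm{u}_{1,\mathcal{H},0} = \delta\}$ then gives $\inf_{\norm{u}_{1,\mathcal{H},0} = \delta}\ph(u) \geq \tfrac{C_1}{2}\delta^{q_+} > 0$, and likewise for $\ph_\pm$ with the identical constant, so a single $\delta$ works for all three.

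There is no genuine obstacle in this argument; it is a direct corollary of Proposition \ref{Prop:PhLowerBound}. The only point worth flagging is that the whole mechanism rests on $q_+ < r_-$: this is what makes the leading term $C_1\norm{u}^{q_+}$ dominate the perturbation $C_2\norm{u}^{r_-}$ near the origin. Note that this inequality is part of the hypotheses and, by Lemma \ref{Le:Propf}\ref{Propf:q<r}, is in any case forced once \eqref{Asf:GrowthInfty} is assumed, so the statement is consistent with the standing assumptions used later on.
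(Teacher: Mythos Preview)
Your argument is correct and matches the paper's approach: the paper simply states that the result follows directly from Proposition~\ref{Prop:PhLowerBound}, and your factorization $\norm{u}_{1,\mathcal{H},0}^{q_+}\bigl(C_1 - C_2 \norm{u}_{1,\mathcal{H},0}^{r_- - q_+}\bigr)$ together with $q_+ < r_-$ is exactly how one reads it off. There is nothing to add.
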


\begin{proposition}
	\label{Prop:PointBeyondMountains}
	Let \eqref{H1} be satisfied and $f$ fulfill \eqref{Asf:Caratheodory}, \eqref{Asf:WellDef} and \eqref{Asf:GrowthInfty}. Let $0 \neq u \in \WHzero$, then $\ph(tu) \xrightarrow{t \to \pm \infty} - \infty$. Furthermore, if $u \geq 0$ a.\,e.\,in $\Omega$, $\ph_\pm (tu) \xrightarrow{t \to \pm \infty} - \infty$.
\end{proposition}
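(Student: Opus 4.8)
The plan is to exploit the $q_+$-superlinearity of $F$ at infinity, encoded in Lemma~\ref{Le:Propf}\ref{Propf:EpsilonLowerBoundF}, against the fact that along the ray $t \mapsto tu$ the operator part of $\ph$ grows at most like $\abs{t}^{q_+}$. First I would fix $0 \neq u \in \WHzero$ and restrict attention to $\abs{t} \geq 1$. Since $p(x) < q(x) \leq q_+$ on $\close$, for such $t$ we have $\abs{t}^{p(x)} \leq \abs{t}^{q_+}$ and $\abs{t}^{q(x)} \leq \abs{t}^{q_+}$, while $p(x), q(x) \geq p_-$; hence
\[
	\into \left( \frac{\abs{\nabla (tu)}^{p(x)}}{p(x)} + \mu(x) \frac{\abs{\nabla (tu)}^{q(x)}}{q(x)} \right) \dx
	\leq \frac{\abs{t}^{q_+}}{p_-} \into \left( \abs{\nabla u}^{p(x)} + \mu(x) \abs{\nabla u}^{q(x)} \right) \dx
	= \frac{\abs{t}^{q_+}}{p_-} \, \rho_{\mathcal{H}}(\nabla u),
\]
and $\rho_{\mathcal{H}}(\nabla u) < \infty$ because $u \in \WHzero$.

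Next I would bound the potential term from below. By Lemma~\ref{Le:Propf}\ref{Propf:EpsilonLowerBoundF}, for every $\eps > 0$ there is $C_\eps > 0$ with $F(x,s) \geq \frac{\eps}{q_+} \abs{s}^{q_+} - C_\eps$; applied with $s = tu(x)$ and integrated this gives $\into F(x,tu) \dx \geq \frac{\eps}{q_+} \abs{t}^{q_+} \norm{u}_{q_+}^{q_+} - C_\eps \abs{\Omega}$, where $\WHzero \hookrightarrow L^{q_+}(\Omega)$ by Proposition~\ref{Prop:EmbeddingsH}(iii) since $q_+ < p^*_-$ by \eqref{H1}, and $\norm{u}_{q_+} > 0$ as $u \neq 0$. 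Combining the two estimates, for $\abs{t} \geq 1$,
\[
	\ph(tu) \leq \abs{t}^{q_+} \left( \frac{\rho_{\mathcal{H}}(\nabla u)}{p_-} - \frac{\eps \, \norm{u}_{q_+}^{q_+}}{q_+} \right) + C_\eps \abs{\Omega}.
\]
Choosing $\eps$ large enough that the parenthesis is negative (any $\eps > \frac{q_+ \rho_{\mathcal{H}}(\nabla u)}{p_- \norm{u}_{q_+}^{q_+}}$ works), the right-hand side tends to $-\infty$ as $\abs{t} \to \infty$, which yields both $\ph(tu) \to -\infty$ as $t \to +\infty$ and as $t \to -\infty$.

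For the truncated functionals, suppose $u \geq 0$ a.e.\ in $\Omega$. For $t > 0$ one has $(tu)^+ = tu$, so $F(x,(tu)^+) = F(x,tu)$ and $\ph_+(tu)$ equals exactly the expression estimated above, whence $\ph_+(tu) \to -\infty$ as $t \to +\infty$; likewise $-(tu)^- = tu$ for $t < 0$, so $\ph_-(tu)$ coincides with the same expression and $\ph_-(tu) \to -\infty$ as $t \to -\infty$. The argument is essentially routine and I do not expect a genuine obstacle; the only points requiring care are that the admissible $\eps$ (hence $C_\eps$) depends on the fixed $u$ (harmless, since $u$ is frozen) and that one must confine $t$ to $\abs{t} \geq 1$ so that $\abs{t}^{p(x)}$ and $\abs{t}^{q(x)}$ are both dominated by $\abs{t}^{q_+}$.
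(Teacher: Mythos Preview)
Your proof is correct and follows essentially the same approach as the paper: both use Lemma~\ref{Le:Propf}\ref{Propf:EpsilonLowerBoundF} to get the lower bound $F(x,s) \geq \tfrac{\eps}{q_+}|s|^{q_+} - C_\eps$, compare the resulting $\eps|t|^{q_+}\|u\|_{q_+}^{q_+}$ term against the operator part for $|t|\geq 1$, and then choose $\eps$ large. The only cosmetic difference is that the paper keeps the $p(\cdot)$-term with growth $|t|^{p_+}$ and the weighted $q(\cdot)$-term with growth $|t|^{q_+}$ separate (using $p_+<q_+$ at the end), whereas you absorb both into a single $|t|^{q_+}$ factor from the start; your version is marginally more streamlined but otherwise identical in spirit.
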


\begin{proof}
	First we prove the assertion for $\ph$. Fix $0 \neq u \in \WHzero$, $\abs{t} \geq 1$ and $\eps \geq 1$, by Lemma \ref{Le:Propf} \ref{Propf:EpsilonLowerBoundF} we can derive
	(note that $ \norm{u}_{q_+} < \infty$ by the embedding $\WHzero \hookrightarrow \Lp{q_+}$ from Proposition \ref{Prop:EmbeddingsH} (iii) and Lemma \ref{Le:Propf} \ref{Propf:q<r})
	\begin{align*}
		\ph(tu) & \leq \frac{\abs{t}^{p_+}}{p_-} \varrho_{p(\cdot)} ( \nabla u ) + \frac{\abs{t}^{q_+}}{q_-} \varrho_{q(\cdot),\mu} ( \nabla u )
		- \frac{\eps \abs{t}^{q_+}}{q_+} \norm{u}_{q_+}^{q_+} + C_\eps |\Omega|\\
		& = \frac{\abs{t}^{p_+}}{p_-} \varrho_{p(\cdot)} ( \nabla u ) + \abs{t}^{q_+} \left( \frac{ \varrho_{q(\cdot),\mu} ( \nabla u ) }{q_-}
		- \eps \frac{ \norm{u}_{q_+}^{q_+} }{q_+} \right) + C_\eps |\Omega|.
	\end{align*}
	Choosing $ \eps $ big enough such that the second term is negative it follows that $\ph(tu) \xrightarrow{t \to \pm \infty} - \infty$.

	For the cases for $\ph_\pm$ one only has to note that if $u \geq 0$ a.\,e.\,in $\Omega$, $\ph_\pm (tu) = \ph(tu)$ for $\pm t > 0$.
\end{proof}

Finally, in order to apply the mountain pass theorem, it is only left to see that the necessary compactness condition is satisfied.

\begin{proposition}\label{Prop:CeramiCondition}
	Let \eqref{H1} be satisfied and $f$ fulfill \eqref{Asf:Caratheodory}, \eqref{Asf:WellDef}, \eqref{Asf:GrowthInfty} and \eqref{Asf:CeramiAssumption}. Then the functionals $\ph_\pm$ satisfy the (C)-condition.
\end{proposition}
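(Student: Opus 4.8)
The plan is to verify the Cerami condition for $\ph_+$ (the case of $\ph_-$ being symmetric). So I take a sequence $\{u_n\}_{n\in\N}\subseteq\WHzero$ such that $|\ph_+(u_n)|\le M_0$ for some $M_0>0$ and $(1+\|u_n\|_{1,\mathcal H,0})\ph_+'(u_n)\to 0$ in $\WHzero^*$, and I must extract a strongly convergent subsequence. The first and crucial step is \emph{boundedness}: I claim $\{u_n\}$ is bounded in $\WHzero$. Arguing by contradiction, suppose $\|u_n\|_{1,\mathcal H,0}\to\infty$ and set $y_n = u_n/\|u_n\|_{1,\mathcal H,0}$, so $\|y_n\|_{1,\mathcal H,0}=1$; passing to a subsequence, $y_n\weak y$ in $\WHzero$, $y_n\to y$ in $\Lp{r(\cdot)}$ and a.\,e.\,in $\Omega$, with $y\ge 0$ (since the truncation forces the negative parts to contribute nothing to the energy growth). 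The dichotomy is whether $y\equiv 0$ or $y\not\equiv 0$. If $y\not\equiv 0$, then on the set $\{y>0\}$ we have $|u_n(x)|\to\infty$, and by Fatou together with \eqref{Asf:GrowthInfty} in the form of Lemma \ref{Le:Propf} \ref{Propf:EpsilonLowerBoundF} one shows $\into F(x,u_n^+)\dx / \|u_n\|_{1,\mathcal H,0}^{q_+}\to+\infty$, while the modular part of $\ph_+(u_n)$ is controlled by $\|u_n\|_{1,\mathcal H,0}^{q_+}$ via Proposition \ref{Prop:PropertiesModularDoublePhase}; hence $\ph_+(u_n)\to-\infty$, contradicting boundedness.

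The harder subcase is $y\equiv 0$, and this is where \eqref{Asf:CeramiAssumption} enters through an interpolation argument — the step I expect to be the main obstacle. From $\ph_+'(u_n)\to 0$ tested against $u_n$ and from the energy bound one gets, after the standard manipulation, a bound of the form
\begin{equation*}
	\into\big(f(x,u_n^+)u_n^+ - q_+F(x,u_n^+)\big)\dx \le M_1
\end{equation*}
for some constant $M_1>0$ (using that $q_+$ times the modular part minus the derivative paired with $u_n$ telescopes, up to the lower-order gap between $p$ and $q$ exponents and the $o(1)$ terms). By \eqref{Asf:CeramiAssumption} this forces $\int_{\{u_n>0\}}|u_n|^{l(x)}\dx$ to stay bounded, hence $\{u_n^+\}$ bounded in $\Lp{l(\cdot)}$. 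Now pick $t\in(0,1)$ with $\frac1{r_+}=\frac{1-t}{l_-}+\frac{t}{p^*_-}$; the condition $l_-\in\big((r_+-p_-)\tfrac{N}{p_-},r_+\big)$ is exactly what makes such an interpolation exponent valid and yields $t q_+ < p_-$ (this is the content of the second Remark after \eqref{Hf}). Interpolating $\|u_n^+\|_{r_+}\le\|u_n^+\|_{l_-}^{1-t}\|u_n^+\|_{p^*_-}^{t}$ and using the embedding $\WHzero\hookrightarrow L^{p^*_-}(\Omega)$ (Proposition \ref{Prop:EmbeddingsH} (ii)) gives $\|u_n^+\|_{r(\cdot)}^{r_+}\le c\,\|u_n\|_{1,\mathcal H,0}^{t q_+}$ up to lower order. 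Feeding this back into the lower bound $\ph_+(u_n)\ge C_1\|u_n\|_{1,\mathcal H,0}^{p_-} - C_\eps\|u_n^+\|_{r(\cdot)}^{r_+} + \dots$ (cf.\,the proof of Proposition \ref{Prop:PhLowerBound}) and using $tq_+<p_-$ shows the right-hand side $\to+\infty$, again contradicting $|\ph_+(u_n)|\le M_0$. One must also handle the case $\|u_n^+\|_{r(\cdot)}<1$ versus $\ge 1$ separately because of the modular–norm relations, and keep track of which terms are genuinely lower order; this bookkeeping is routine but is where care is needed.

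Once boundedness is established, the rest is standard for operators of type $(\Ss_+)$. Passing to a subsequence, $u_n\weak u$ in $\WHzero$, and by Proposition \ref{Prop:EmbeddingsH} (iii) $u_n\to u$ in $\Lp{r(\cdot)}$. Testing $\ph_+'(u_n)\to 0$ with $u_n - u$ gives
\begin{equation*}
	\lan A(u_n),u_n-u\ran = \lan\ph_+'(u_n),u_n-u\ran + \into f(x,u_n^+)(u_n-u)\dx \longrightarrow 0,
\end{equation*}
where the last integral vanishes by Lemma \ref{Le:Propf} \ref{Propf:ConvergenceFterm} (strong continuity of $I_f'$ composed with the truncation, together with $u_n-u\to 0$ in $\Lp{r(\cdot)}$ and the growth bound \eqref{Asf:WellDef}). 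Hence $\limsup_n\lan A(u_n),u_n-u\ran\le 0$, and since $A$ is of type $(\Ss_+)$ (Proposition \ref{Prop:PropertiesDoublePhaseOp}) we conclude $u_n\to u$ strongly in $\WHzero$, which is the Cerami condition. I would remark that $\ph_+\in C^1(\WHzero)$ so that $\ph_+'$ is well defined, which follows from Lemma \ref{Le:Propf} \ref{Propf:ConvergenceFterm} and the $C^1$ smoothness of the modular term.
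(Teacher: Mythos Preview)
Your core idea---extract a bound on $\|u_n^+\|_{l_-}$ from \eqref{Asf:CeramiAssumption} via the quantity $\int(f(x,u_n^+)u_n^+ - q_+F(x,u_n^+))\dx$, then interpolate between $L^{l_-}$ and $L^{p^*_-}$ and feed back---is exactly the paper's key step. However, the paper packages it more directly and avoids your contradiction/dichotomy scaffold. It first tests $\ph_+'(u_n)$ with $-u_n^-$ to obtain $\rho_{\mathcal H}(-\nabla u_n^-)\le\eps_n$, hence $-u_n^-\to 0$ outright (this replaces your vague claim that ``$y\ge 0$''). Then it bounds $\{u_n^+\}$ \emph{directly}: testing $\ph_+'(u_n)$ with $u_n^+$ and using \eqref{Asf:WellDef} gives $\|u_n^+\|_{1,\mathcal H,0}^{p_-}\le\rho_{\mathcal H}(\nabla u_n^+)\le M_5(1+\|u_n^+\|_{r_+}^{r_+})$, and the interpolation turns the right-hand side into $M_6(1+\|u_n^+\|_{1,\mathcal H,0}^{tr_+})$ with $tr_+<p_-$. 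No normalized sequence $y_n$, no dichotomy, and in particular no appeal to \eqref{Asf:GrowthInfty} via Fatou is needed for boundedness; your ``$y\not\equiv 0$'' branch is superfluous, since the ``$y\equiv 0$'' argument never uses $y\equiv 0$ and already yields boundedness on its own.

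Two small corrections to your write-up. First, the interpolation produces $\|u_n^+\|_{r_+}^{r_+}\le C\|u_n^+\|_{p^*_-}^{tr_+}$, so the relevant exponent to compare with $p_-$ is $tr_+$, not $tq_+$; the condition $l_->(r_+-p_-)N/p_-$ in \eqref{Asf:CeramiAssumption} is precisely $tr_+<p_-$, as the paper computes explicitly. Second, your appeal to Proposition~\ref{Prop:PhLowerBound} is illegitimate here because that proposition uses \eqref{Asf:GrowthZero}, which is not among the hypotheses of the present statement. Either replace it by the cruder bound $|F(x,s)|\le C(1+|s|^{r(x)})$ coming from \eqref{Asf:WellDef} alone, or---cleaner---follow the paper and work with $\lan\ph_+'(u_n),u_n^+\ran$ rather than with $\ph_+(u_n)$. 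The final $(\Ss_+)$ step is identical to the paper's.
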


\begin{proof}
	We write the proof for $\ph_+$, the case for $\ph_-$ is very similar.

	Let $\{u_n\}_{n \in \N} \subseteq \WHzero$ be a sequence such that
	\begin{align}
		\abs{\ph_+(u_n)} &\leq M_1 \quad \text{for } M_1 > 0 \text{ and for all } n \in \N, \label{Eq:C-bound}\\
		\label{Eq:C-conv}
		(1 + \norm{u_n}_{1,\mathcal{H},0}) \ph_+ ' (u_n) &\to 0 \quad \text{in } \WHzero^*.
	\end{align}
	In the following, recall that for $v \in \WHzero$, there holds that $v^+ \in \WHzero$ by Proposition \ref{Prop:ClosedTruncations}. By \eqref{Eq:C-conv}, for all $v \in \WHzero$ and for a sequence $\eps_n \to 0$ as $n \to \infty$ it holds
	\begin{align}\label{Eq:C-convTestedV}
		\begin{split}
			&\abs{\into \left( \abs{ \nabla u_n}^{p(x)-2} \nabla u_n + \mu (x) \abs{ \nabla u_n}^{q(x)-2} \nabla u_n \right) \cdot \nabla v \dx
			- \into f(x, u_n^+) v \dx}\\
			&\leq \frac{\eps_n \norm{v}_{1,\mathcal{H},0}}{1 + \norm{u_n}_{1,\mathcal{H},0}}.
		\end{split}
	\end{align}
	So for the case $v = - u_n ^- \in \WHzero$, as the supports of $ u_n^+$ and $- u_n^-$ do not overlap, it follows
	\begin{equation*}
		\rho_{\mathcal{H}} \left( - \nabla u_n ^- \right)
		= \into \left( \abs{ \nabla u_n^-}^{p(x)} + \mu (x) \abs{ \nabla u_n^-}^{q(x)} \right) \dx
		\leq \eps_n \quad \text{for all } n \in \N,
	\end{equation*}
	hence by Proposition \ref{Prop:PropertiesModularDoublePhase} (v)
	\begin{equation}\label{Eq:NegPartConvegence}
		- u_n ^- \to 0 \quad \text{in } \WHzero.
	\end{equation}

	{\bf Claim:} The sequence $\{ u_n ^+ \}_{n\in\N} \subseteq \WHzero$ is bounded.

	First, from \eqref{Eq:C-bound} and \eqref{Eq:NegPartConvegence} it follows
	\begin{equation*}
		\rho_{\mathcal{H}} ( \nabla u_n ^+ )  - \into q_+ F (x,u_n^+) \dx \leq M_2 \quad \text{for all } n \in \N,
	\end{equation*}
	and choosing $v = u_n^+ \in \WHzero$ in \eqref{Eq:C-convTestedV} gives
	\begin{equation*}
		- \rho_{\mathcal{H}} ( \nabla u_n ^+ )
		+ \into f(x,u_n^+) u_n^+ \dx \leq \eps_n \quad \text{for all } n \in \N.
	\end{equation*}
	Combining these two equations yields
	\begin{equation*}
		\into \left( f(x,u_n^+) u_n^+ - q_+ F (x,u_n^+) \right)
		\dx \leq M_3 \quad \text{for all } n \in \N.
	\end{equation*}
	Now, without loss of generality, assume that $l_- \leq \widetilde{l}_-$. By \eqref{Asf:CeramiAssumption}, there exist $\tilde{K}, K_0 > 0$ such that
	\begin{equation*}
		\tilde{K} |s|^{l_-} - K_0
		\leq f(x,s)s - q_+ F(x,s)
		\quad \text{for all } s \in \R \text{ and for a.\,a.\,} x \in \Omega,
	\end{equation*}
	which together with the previous equation results in
	\begin{equation}\label{Eq:BoundedCerami}
		\norm{u_n^+}_{l_-} \leq M_4 \quad \text{for all } n \in \N.
	\end{equation}

	By \eqref{Asf:WellDef} and \eqref{Asf:CeramiAssumption} we know that $l_- < r_+ < p^*_-$, hence there exists $t \in (0,1)$ such that
	\begin{equation*}
		\frac{1}{r_+} = \frac{t}{p^*_-} + \frac{1-t}{l_-}.
	\end{equation*}
	The interpolation inequality (see, for example, Papageorgiou-Winkert \cite[Proposition 2.3.17]{Papageorgiou-Winkert-2018}) and \eqref{Eq:BoundedCerami} yield
	\begin{equation}
		\label{Eq:NormChangeCerami}
		\norm{u^+_n}_{r_+}^{r_+}
		\leq \left( \norm{u^+_n}^{t}_{p^*_-} \norm{u^+_n}^{1-t}_{l_-} \right)^{r_+}
		\leq M_4^{(1-t)r_+} \norm{u^+_n}^{t r_+}_{p^*_-} \quad \text{for all } n \in \N.
	\end{equation}
	We may assume that $\norm{u_n^+}_{1,\mathcal{H},0} \geq 1$ for all $n \in \N$. Testing again in \eqref{Eq:C-convTestedV} with $v=u^+_n$ together with Proposition \ref{Prop:PropertiesModularDoublePhase} (iv) and \eqref{Asf:WellDef} one has
	\begin{equation*}
		\norm{u^+_n}^{p_-}_{1,\mathcal{H},0}
		\leq \rho_{\mathcal{H}} ( \nabla u^+_n )
		\leq M_5 (1 + \norm{u^+_n}^{r_+}_{r_+}) \quad \text{for all } n \in \N,
	\end{equation*}
	and by \eqref{Eq:NormChangeCerami} and the embedding $\WHzero \hookrightarrow \Wpzero{p_-} \hookrightarrow \Lp{p^*_-}$, which is given by Proposition \ref{Prop:EmbeddingsH} (i) and classical Sobolev embeddings, we get
	\begin{equation*}
		\norm{u^+_n}^{p_-}_{1,\mathcal{H},0}
		\leq M_6 \left( 1 + \norm{u^+_n}^{tr_+}_{1,\mathcal{H},0} \right)  \quad \text{for all } n \in \N.
	\end{equation*}
	Hence $\norm{u^+_n}^{p_-}_{1,\mathcal{H},0} $ must be bounded as by \eqref{Asf:CeramiAssumption} it holds
	\begin{equation*}
		tr_+
		= \frac{p^*_-(r_+ - l_-)}{p^*_- - l_-}
		= \frac{N p_-(r_+ - l_-)}{Np_- - N l_- + p_- l_-}
		< \frac{N p_-(r_+ - l_-)}{Np_- - N l_- + p_- (r_+ - p_-) \frac{N}{p_-}} = p_-.
	\end{equation*}
	This finishes the proof of the Claim.

	From \eqref{Eq:NegPartConvegence} and the Claim we know that $\{u_n\}_{n \in \N}$ is bounded in $\WHzero$, therefore there exists $u \in \WHzero$ and a subsequence $\{u_{n_k}\}_{k \in \N}$ such that
	\begin{equation*}
		u_{n_k} \weak u \quad \text{in } \WHzero.
	\end{equation*}
	Furthermore, by testing \eqref{Eq:C-convTestedV} with $v = u_{n_k} - u \in \WHzero$, as $\norm{u_{n_k} - u}_{1,\mathcal{H},0} \leq \norm{u_{n_k}}_{1,\mathcal{H},0} + \norm{u}_{1,\mathcal{H},0}$ is upper bounded uniformly in $k$, it follows
	\begin{equation*}
		\lim\limits_{k \to \infty} \lan \ph_+ ' (u_{n_k}) , u_{n_k} - u \ran = 0,
	\end{equation*}
	and by the strong continuity of Lemma \ref{Le:Propf} \ref{Propf:ConvergenceFterm} (note that $f_+ (x,s) = f(x,s^+)$ still satisfies \eqref{Asf:Caratheodory} and \eqref{Asf:WellDef})
	\begin{equation*}
		\lim\limits_{k \to \infty} \left \lan I_{f_+} ' (u_{n_k}) , u_{n_k} - u \right\ran = 0.
	\end{equation*}
	In conclusion we obtain
	\begin{equation*}
		\lim\limits_{k \to \infty} \lan A(u_{n_k}) , u_{n_k} - u \ran = 0.
	\end{equation*}
	The (S$_+$)-property of the double phase operator $A$ from Proposition \ref{Prop:PropertiesDoublePhaseOp}
	implies that
	\begin{equation*}
		u_{n_k} \to u \quad \text{in } \WHzero.
	\end{equation*}
\end{proof}

Now we are finally in the position to apply the mountain pass theorem.
\begin{theorem}\label{Th:ConstantSignSolutions}
	Let \eqref{H1} be satisfied and $f$ fulfill \eqref{Asf:Caratheodory}, \eqref{Asf:WellDef}, \eqref{Asf:GrowthInfty}, \eqref{Asf:GrowthZero} and \eqref{Asf:CeramiAssumption}. Then there exist nontrivial weak solutions of problem \eqref{Eq:Problem} $u_0,v_0 \in \WHzero \cap \Lp{\infty} $ such that $u_0 \geq 0$ and $v_0 \leq 0$ a.\,e.\,in $\Omega$.
\end{theorem}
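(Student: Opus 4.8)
The plan is to apply the mountain pass theorem (Theorem~\ref{Th:MPT}) separately to the two truncated functionals $\ph_+$ and $\ph_-$, and then to upgrade the resulting critical points to constant-sign bounded weak solutions of~\eqref{Eq:Problem}.

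First I would record that $\ph_\pm \in C^1(\WHzero)$: the principal part is $C^1$ by the properties of the modular and of the operator $A$ in Proposition~\ref{Prop:PropertiesDoublePhaseOp}, while $u \mapsto \into F(x,\pm u^\pm)\dx$ is $C^1$ by Lemma~\ref{Le:Propf}~\ref{Propf:ConvergenceFterm}, since $f_\pm(x,s) \ldef f(x,\pm s^\pm)$ still satisfies \eqref{Asf:Caratheodory} and \eqref{Asf:WellDef}. For the mountain pass geometry I take $u_0 = 0$, which is admissible because $\ph_\pm(0) = 0$ while $\inf_{\norm{u}_{1,\mathcal{H},0} = \delta}\ph_\pm(u) > 0$ by Proposition~\ref{Prop:RingOfMountains} (so $m_\delta > 0$); and, fixing any $0 \le w \in \WHzero\setminus\{0\}$, a point $u_1 = t_1 w$ ($t_1 > 0$ large for $\ph_+$, $t_1 < 0$ with $|t_1|$ large for $\ph_-$) with $\ph_\pm(t_1 w) < 0 < m_\delta$, which exists by Proposition~\ref{Prop:PointBeyondMountains}; in particular $\norm{u_1 - u_0}_{1,\mathcal{H},0} > \delta$ for $|t_1|$ large. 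The compactness requirement is exactly the (C)-condition of Proposition~\ref{Prop:CeramiCondition}. Hence Theorem~\ref{Th:MPT} yields $u_0 \in \WHzero$ with $\ph_+'(u_0) = 0$ and $\ph_+(u_0) \ge m_\delta > 0 = \ph_+(0)$, so $u_0 \neq 0$; likewise $v_0 \in \WHzero$ with $\ph_-'(v_0) = 0$ and $v_0 \neq 0$.

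Next I would establish the sign. Testing $\ph_+'(u_0) = 0$ with $v = -u_0^- \in \WHzero$ (legitimate by Proposition~\ref{Prop:ClosedTruncations}) and using $\lan A(u_0), -u_0^-\ran = \rho_{\mathcal{H}}(\nabla u_0^-)$ together with $f(x,u_0^+)\,(-u_0^-) = 0$ a.e.\ in $\Omega$ (where $u_0^- \neq 0$ one has $u_0^+ = 0$ and $f(x,0) = 0$ by Lemma~\ref{Le:Propf}~\ref{Propf:ZeroAtOrigin}), one obtains $\rho_{\mathcal{H}}(\nabla u_0^-) = 0$, hence $\nabla u_0^- = 0$ a.e., and then $u_0^- = 0$ by the Poincaré inequality of Proposition~\ref{Prop:PoincareIneq}. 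Thus $u_0 \ge 0$ a.e.\ in $\Omega$, so $F_+(x,u_0) = F(x,u_0)$ and $f(x,u_0^+) = f(x,u_0)$, and $u_0$ is a weak solution of~\eqref{Eq:Problem}. The symmetric computation, testing $\ph_-'(v_0) = 0$ with $v = v_0^+$, gives $v_0 \le 0$ a.e.\ in $\Omega$ and that $v_0$ solves~\eqref{Eq:Problem}. Finally, the $\Lp{\infty}$-bound follows from Theorem~\ref{Th:BoundedSolutions} applied with $\mathcal{A}(x,t,\xi) = |\xi|^{p(x)-2}\xi + \mu(x)|\xi|^{q(x)-2}\xi$ and $\mathcal{B}(x,t,\xi) = f(x,t)$: hypothesis \eqref{H3} holds with the same $r$ as in \eqref{Asf:WellDef}, since $p(x) < q_+ < r_-$ by Lemma~\ref{Le:Propf}~\ref{Propf:q<r} and $r_+ < p^*_-$, the growth bound on $\mathcal{A}$ is immediate (with $\alpha_2 = 1$ and any $\alpha_3 > 0$), and the bound on $\mathcal{B}$ is precisely \eqref{Asf:WellDef}.

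Since the mountain pass geometry and the (C)-condition have already been verified in the preceding propositions, the argument is largely a bookkeeping assembly; the only step that requires genuine care is the sign argument in the third paragraph, namely checking that the truncated functional $\ph_\pm$ and the original energy functional $\ph$ coincide at the critical point so that it is in fact a solution of the untruncated problem~\eqref{Eq:Problem}, and making sure that the exponent $r$ from \eqref{Asf:WellDef} is simultaneously admissible in Theorem~\ref{Th:BoundedSolutions} and in the mountain pass lemmas.
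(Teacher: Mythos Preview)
Your proposal is correct and follows essentially the same route as the paper: apply the mountain pass theorem to $\ph_\pm$ using Propositions~\ref{Prop:RingOfMountains}, \ref{Prop:PointBeyondMountains} and \ref{Prop:CeramiCondition}, test the resulting critical points with $\mp u_0^\mp$ to kill the wrong-sign part via $\rho_{\mathcal H}(\nabla u_0^\mp)=0$ and Poincar\'e, and then invoke Theorem~\ref{Th:BoundedSolutions} for the $L^\infty$-bound. Your write-up is in fact a bit more explicit than the paper's (you spell out the $C^1$-regularity of $\ph_\pm$, the choice of $u_1$, and the verification of \eqref{H3} for the boundedness result), but there is no substantive difference in strategy.
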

\begin{proof}
	We can apply Theorem \ref{Th:MPT} to the truncated energy functionals $\ph_\pm$ because of Propositions \ref{Prop:RingOfMountains}, \ref{Prop:PointBeyondMountains} and \ref{Prop:CeramiCondition}. Then we know that there exist $u_0,v_0 \in \WHzero$ such that $\ph_+'(u_0) = 0 = \ph_-'(v_0)$ and
	\begin{equation*}
		\ph_+(u_0), \ph_-(v_0) \geq \inf_{\norm{u}_{1,\mathcal{H},0} = \delta} \ph_\pm(u) > 0 = \ph_+ (0).
	\end{equation*}
	This shows $u_0 \neq 0 \neq v_0$.

	Finally, observe that if we test $\ph_+'(u_0) = 0$ with $-u_0^-$ we obtain $\rho_{\mathcal{H}} (- \nabla u_0^-) = 0$, which by Proposition \ref{Prop:PropertiesModularDoublePhase} (i) and Proposition \ref{Prop:PoincareIneq} implies that $-u_0^- = 0$ a.$\;$e. in $\Omega$, so $u_0 = u_0^+ \geq 0$ a.\,e.\,in $\Omega$. With an analogous argument, $v_0 \leq 0$ a.\,e.\,in $\Omega$. The boundedness of the solutions follows from Theorem \ref{Th:BoundedSolutions}.
\end{proof}

\section{Sign-changing solution}\label{sign-changing-solutions}

In this section we will prove the existence of a solution with non-trivial positive and negative part on top of the two solutions from the previous section. This will be carried out by using the so-called Nehari manifold and we base our arguments on the ideas of the paper by  Gasi\'{n}ski-Winkert \cite{Gasinski-Winkert-2021}. For a broader description of the Nehari manifold method, check the book chapter by Szulkin-Weth \cite{Szulkin-Weth-2010}.

First, note that the Nehari manifold of $\ph$ is the set
\begin{equation*}
	\mathcal{N} = \left\{ u \in \WHzero \,:\, \lan \ph'(u) , u \ran = 0, \; u \neq 0 \right\}.
\end{equation*}
As the weak solutions of \eqref{Eq:Problem} are exactly the critical points of $\ph$, $\mathcal{N}$ still contains all the weak solutions of \eqref{Eq:Problem} except for possibly $u=0$.
Also note that this set might not be in general a manifold (and this will not be important in the present work), but in any case the usual name in the literature is Nehari manifold.

As we want to deal with sign-changing solutions, we will be actually more interested in the set
\begin{equation*}
	\mathcal{N}_0 = \left\{ u \in \WHzero \,:\, \pm u^\pm \in \mathcal{N} \right\}.
\end{equation*}

First, it is necessary to prove some interesting properties of $\mathcal{N}$ that will be useful later.

\begin{proposition}\label{Prop:NehariManifoldProps}
	Let \eqref{H1} be satisfied  and $f$ fulfill \eqref{Asf:Caratheodory}, \eqref{Asf:WellDef}, \eqref{Asf:GrowthInfty}, \eqref{Asf:GrowthZero} and \eqref{Asf:QuotientMono}. Then for any $u \in \WHzero\setminus\{0\}$ there exists a unique $t_u > 0$ such that $t_u u \in \mathcal{N}$. Furthermore, $\ph(t_u u) > 0$, $\frac{d}{dt} \ph(tu) > 0$ for $0 < t < t_u$, $\frac{d}{dt} \ph(tu) = 0$ for $t = t_u$, $\frac{d}{dt} \ph(tu) < 0$ for $t > t_u$, and therefore $\ph(tu) < \ph(t_u u)$ for all $0 < t \neq t_u$.
\end{proposition}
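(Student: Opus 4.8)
Fix $u \in \WHzero \setminus \{0\}$ and study the fibering map $\psi \colon (0,\infty) \to \R$, $\psi(t) = \ph(tu)$. Differentiating under the integral sign (justified on compact $t$-intervals by \eqref{Asf:WellDef} and dominated convergence) shows $\psi \in C^1$ with
\[
	\psi'(t) = \lan \ph'(tu), u \ran = \frac{1}{t}\lan \ph'(tu), tu \ran = \into \left( t^{p(x)-1}|\nabla u|^{p(x)} + \mu(x)t^{q(x)-1}|\nabla u|^{q(x)} \right) \dx - \into f(x,tu)\,u \dx,
\]
and, since $\ph$ is continuous and $tu \to 0$ in $\WHzero$ as $t \to 0^+$ while $F(x,0)=0$ by Lemma~\ref{Le:Propf}~\ref{Propf:ZeroAtOrigin}, one has $\psi(t) \to \ph(0) = 0$ as $t \to 0^+$. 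The whole statement reduces to analyzing the sign of $\psi'$, because $t_u u \in \mathcal{N}$ (with $t_u>0$) is equivalent to $\psi'(t_u)=0$.

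The key device is to write $\psi'(t) = t^{q_+ - 1} g(t)$ with $g(t) = \alpha(t) - \beta(t)$, where
\[
	\alpha(t) = \into \left( t^{p(x)-q_+}|\nabla u|^{p(x)} + \mu(x)t^{q(x)-q_+}|\nabla u|^{q(x)} \right) \dx, \qquad \beta(t) = \into \frac{f(x,tu)(tu)}{t^{q_+}} \dx .
\]
Since $p(x) < q(x) \le q_+$ for all $x \in \close$ (so $p(x) - q_+ < 0$ and $q(x) - q_+ \le 0$) and $\nabla u \not\equiv 0$ (by Proposition~\ref{Prop:PoincareIneq}), the function $\alpha$ is continuous, strictly decreasing on $(0,\infty)$, with $\alpha(t) \to +\infty$ as $t \to 0^+$ (e.g.\ by monotone convergence). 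The main point is that $\beta$ is non-decreasing: for a.\,a.\,$x$ with $u(x)\neq 0$, substituting $s = tu(x)$ gives $\frac{f(x,tu(x))(tu(x))}{t^{q_+}} = |u(x)|^{q_+}\,\frac{f(x,s)s}{|s|^{q_+}}$, where $\frac{f(x,s)s}{|s|^{q_+}}$ equals $\frac{f(x,s)}{|s|^{q_+-1}}$ for $s>0$ and $-\frac{f(x,s)}{|s|^{q_+-1}}$ for $s<0$; noting that $s$ increases in $t$ when $u(x)>0$ and decreases in $t$ when $u(x)<0$, assumption~\eqref{Asf:QuotientMono} makes $t \mapsto \frac{f(x,tu(x))(tu(x))}{t^{q_+}}$ increasing in both cases, and it is constantly $0$ where $u(x)=0$. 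Integrating, $\beta$ is non-decreasing, hence $g = \alpha - \beta$ is continuous and strictly decreasing on $(0,\infty)$.

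Now existence and uniqueness of $t_u$ follow. Because $\beta$ is non-decreasing it is bounded above near $0$, so $g(t) \ge \alpha(t) - \beta(1) \to +\infty$ as $t \to 0^+$; on the other hand $\psi(t) = \ph(tu) \to -\infty$ as $t \to +\infty$ by Proposition~\ref{Prop:PointBeyondMountains}, which forces $g(t) = \psi'(t)/t^{q_+-1} < 0$ for some large $t$. Being continuous and strictly decreasing, $g$ has a unique zero $t_u \in (0,\infty)$; then $\lan \ph'(t_u u), t_u u \ran = t_u^{q_+} g(t_u) = 0$ and $t_u u \neq 0$, so $t_u u \in \mathcal{N}$, and uniqueness of $t_u$ is exactly uniqueness of the zero of $g$. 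Finally $\frac{d}{dt}\ph(tu) = \psi'(t) = t^{q_+-1}g(t)$ is positive on $(0,t_u)$, zero at $t_u$, negative on $(t_u,\infty)$; since $\psi$ is strictly increasing on $(0,t_u]$ with $\psi(0^+)=0$ we get $\ph(t_u u) = \psi(t_u) > 0$, and the sign of $\psi'$ gives $\ph(tu) = \psi(t) < \psi(t_u) = \ph(t_u u)$ for all $0 < t \neq t_u$.

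\textbf{Main obstacle.} The delicate part is the monotonicity of $\beta$: one has to split into $u(x)>0$ and $u(x)<0$ and transcribe~\eqref{Asf:QuotientMono} correctly in each case, keeping track of the sign of $\frac{f(x,s)s}{|s|^{q_+}}$. Everything else — differentiating $\psi$ under the integral, finiteness and continuity of $\alpha$, $\beta$, $g$ on $(0,\infty)$, and $\alpha(t)\to+\infty$ as $t\to 0^+$ — is routine and uses only the growth bound~\eqref{Asf:WellDef} together with the embeddings of Proposition~\ref{Prop:EmbeddingsH}.
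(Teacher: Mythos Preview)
Your proof is correct and follows essentially the same approach as the paper: both factor out $t^{q_+-1}$ from $\frac{d}{dt}\ph(tu)$ and use \eqref{Asf:QuotientMono} to show the resulting function is strictly decreasing in $t$. The only organizational difference is that the paper first obtains existence of a maximizer via Propositions~\ref{Prop:RingOfMountains} and~\ref{Prop:PointBeyondMountains} together with the extreme value theorem and then proves uniqueness from the monotonicity, whereas you read existence and uniqueness simultaneously off the strictly decreasing function $g$ (using $\alpha(t)\to+\infty$ as $t\to 0^+$ in place of Proposition~\ref{Prop:RingOfMountains}); this is a mild streamlining, not a different method.
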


\begin{proof}
	Fix $u \in \WHzero\setminus\{0\}$ and let $k_u \colon [0 , \infty) \to \R$ be defined by $k_u(t) = \ph (tu)$. From the composition of functions, $k_u$ is $C^1$ in $(0,\infty)$ and continuous in $[0,\infty)$.
	By Propositions \ref{Prop:RingOfMountains} and \ref{Prop:PointBeyondMountains} one gets that there exist $K, \delta > 0$ such that
	\begin{equation}\label{Eq:SignPhi}
		k_u (t) > 0\quad \text{for } 0 < t < \delta
		\quad \text{ and } \quad
		k_u(t) < 0\quad \text{for } t > K,
	\end{equation}
	and clearly $k_u(0) = 0$. By this and the extreme value theorem, there exists $0 < t_u \leq K$ such that
	\begin{equation*}
		\sup_{t \in [0,\infty)} k_u(t)
		= \max_{t \in [0 , K]} k_u(t)
		= k_u (t_u).
	\end{equation*}
	In particular, $t_u$ is a local maximum in the interior of $[0,\infty)$, hence a critical point of $k_u$ and by the chain rule we have
	\begin{equation*}
		0 = k_u ' (t_u)
		= \lan \ph'(t_u u) , u \ran.
	\end{equation*}
	Thus, $t_u u \in \mathcal{N}$.

	It is now left to see the uniqueness of $t_u$, the sign of the derivatives and that it is a strict maximum. First note that \eqref{Asf:QuotientMono} for $t > 0$ implies (as functions only depending on $t$)
	\begin{align*}
		\frac{f(x,tu)}{t^{q_+-1}\abs{u}^{q_+-1}} \text{ increasing, hence } \frac{f(x,tu)u}{t^{q_+-1}} \text{ increasing } & \text{, for } x \in \Omega \text{ with } u(x) > 0,  \\
		\frac{f(x,tu)}{t^{q_+-1}\abs{u}^{q_+-1}} \text{ decreasing, hence } \frac{f(x,tu)u}{t^{q_+-1}} \text{ increasing } & \text{, for } x \in \Omega \text{ with } u(x) < 0.
	\end{align*}
	The equation $k_u'(t) = 0$ with $t>0$ is a necessary condition for $tu \in \mathcal{N}$. Multiplying this condition by $1/t^{q_+ - 1}$ one gets
	\begin{equation*}
		\into \left( \frac{ 1 } {t^{q_+ - p(x)}} | \nabla u |^{p(x)}
		+ \frac{1}{ t^{q_+ - q(x)} } \mu(x) | \nabla u |^{q(x)}
		- \frac{f(x,tu)u}{t^{q_+ - 1}} \right) \dx = 0,
	\end{equation*}
	where, by the comment above and $p(x) < q(x) \leq q_+$ for all $x \in \close$, the integrand is strictly decreasing in the set $\{ \nabla u \neq 0\}$ and at least decreasing outside. Hence the integral is strictly decreasing as a function of $t$, so there can be at most a single value for which the equation holds, i.e.\,there is at most a single value $t_u \in (0, \infty)$ such that $k_u'(t_u) = 0$. As a consequence there can be at most a single value $t_u \in (0, \infty)$ such that $t_u u \in \mathcal{N}$. On the other hand, $k_u'(t)$ is non-vanishing with constant sign in $(0,t_u)$ and in $(t_u,\infty)$, which by \eqref{Eq:SignPhi} must be positive and negative, respectively. Thus $t_u$ is a strict maximum of $k_u$.
\end{proof}

Another reason to restrict ourselves to $\mathcal{N}$ is that the restriction of our energy functional $\ph$ has better properties.

\begin{proposition}\label{Prop:Coercivity}
	Let \eqref{H1} be satisfied  and $f$ fulfill \eqref{Asf:Caratheodory}, \eqref{Asf:WellDef}, \eqref{Asf:GrowthInfty}, \eqref{Asf:GrowthZero} and \eqref{Asf:QuotientMono} . Then the functional $\ph_{|_\mathcal{N}}$ is sequentially coercive, in the sense that for any sequence $\{u_n\}_{n \in \N} \subseteq \mathcal{N}$ such that $\norm{u_n}_{1,\mathcal{H},0} \xrightarrow{n \to \infty} + \infty$ it follows that $\ph(u_n) \xrightarrow{n \to \infty} + \infty$.
\end{proposition}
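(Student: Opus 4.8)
The plan is to establish this Nehari-type coercivity by contradiction, combined with the usual normalization $v_n := u_n/\norm{u_n}_{1,\mathcal{H},0}$. Suppose the claim fails: then there are $M>0$ and a sequence $\{u_n\}_{n\in\N}\subseteq\mathcal{N}$ (after passing to a subsequence) with $t_n:=\norm{u_n}_{1,\mathcal{H},0}\to\infty$ and $\ph(u_n)\le M$ for all $n$. Since $\norm{v_n}_{1,\mathcal{H},0}=1$, reflexivity together with the compact embeddings $\WHzero\hookrightarrow\Lp{q_+}$ and $\WHzero\hookrightarrow\Lp{r(\cdot)}$ from Proposition~\ref{Prop:EmbeddingsH}(iii) (available because $q_+,r_+<p^*_-$) lets me pass to a further subsequence with $v_n\weak v$ in $\WHzero$, $v_n\to v$ in $\Lp{q_+}$ and in $\Lp{r(\cdot)}$, and $v_n\to v$ a.e.\ in $\Omega$. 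The argument then splits according to whether $v$ vanishes.

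If $v\neq0$, I would invoke the superlinearity at infinity through Lemma~\ref{Le:Propf}\ref{Propf:EpsilonLowerBoundF}. Estimating the elliptic part of $\ph$ crudely by $\tfrac{1}{p_-}\rho_{\mathcal{H}}(\nabla u_n)\le\tfrac{1}{p_-}t_n^{q_+}$ (Proposition~\ref{Prop:PropertiesModularDoublePhase}(iv), for $n$ large) and using $\into F(x,u_n)\dx\ge\tfrac{\eps}{q_+}t_n^{q_+}\norm{v_n}_{q_+}^{q_+}-C_\eps|\Omega|$, I divide by $t_n^{q_+}$ and let $n\to\infty$ (with $\norm{v_n}_{q_+}^{q_+}\to\norm{v}_{q_+}^{q_+}>0$) to get $\limsup_n\ph(u_n)/t_n^{q_+}\le\tfrac{1}{p_-}-\tfrac{\eps}{q_+}\norm{v}_{q_+}^{q_+}$ for every $\eps>0$. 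Since the right-hand side can be made arbitrarily negative, $\ph(u_n)\to-\infty$, which contradicts $\ph>0$ on $\mathcal{N}$ (Proposition~\ref{Prop:NehariManifoldProps}); hence this case cannot occur.

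If $v=0$, I would use the fibering property: as $u_n\in\mathcal{N}$, Proposition~\ref{Prop:NehariManifoldProps} forces $t_{u_n}=1$, so $\ph(u_n)=\max_{t\ge0}\ph(tu_n)\ge\ph(sv_n)$ for every fixed $s>1$, because $sv_n=(s/t_n)u_n$. I would then bound the elliptic part of $\ph(sv_n)$ from below by $\tfrac{1}{q_+}\rho_{\mathcal{H}}(\nabla(sv_n))\ge s^{p_-}/q_+$ (Proposition~\ref{Prop:PropertiesModularDoublePhase}(iv), since $\norm{sv_n}_{1,\mathcal{H},0}=s>1$) and $\into F(x,sv_n)\dx$ from above, via Lemma~\ref{Le:Propf}\ref{Propf:EpsilonUpperBoundF}, by $\tfrac{\eps}{p_-}s^{p_+}\varrho_{p(\cdot)}(v_n)+C_\eps\varrho_{r(\cdot)}(sv_n)$, where $\varrho_{p(\cdot)}(v_n)\le c_1$ with $c_1$ independent of $n$ (Poincar\'e inequality and $\WHzero\hookrightarrow\Lp{p(\cdot)}$). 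This gives $\ph(u_n)\ge s^{p_-}/q_+-\tfrac{\eps}{p_-}s^{p_+}c_1-C_\eps\varrho_{r(\cdot)}(sv_n)$; since $v_n\to0$ in $\Lp{r(\cdot)}$, for each fixed $s$ one has $\varrho_{r(\cdot)}(sv_n)\to0$ as $n\to\infty$, so $\liminf_n\ph(u_n)\ge s^{p_-}/q_+-\tfrac{\eps}{p_-}s^{p_+}c_1$. Choosing $\eps=\eps(s)$ so small that $\tfrac{\eps}{p_-}s^{p_+}c_1\le1$ yields $\liminf_n\ph(u_n)\ge s^{p_-}/q_+-1$, and letting $s\to\infty$ forces $\ph(u_n)\to\infty$, contradicting $\ph(u_n)\le M$. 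Both cases being impossible, $\ph(u_n)\to+\infty$ for every such sequence.

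The main obstacle is the case $v=0$. In contrast with the constant-exponent situation, the gaps between $p_-$, $p_+$ and $q_+$ in the modular inequalities of Proposition~\ref{Prop:PropertiesModularDoublePhase} mean that a single scaling parameter does not suffice; the estimate must be run as a two-parameter limit — first $n\to\infty$ with $s$ and $\eps$ frozen, then $\eps$ chosen in terms of $s$, then $s\to\infty$ — so the chief care is bookkeeping, in particular keeping the bound $\varrho_{p(\cdot)}(v_n)\le c_1$ uniform in $n$ and separating cleanly the roles of $s$ and $\eps$.
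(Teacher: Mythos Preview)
Your proof is correct and follows essentially the same route as the paper: normalize, extract a weak limit $v$, rule out $v\neq 0$ via superlinearity contradicting $\ph>0$ on $\mathcal{N}$, and in the case $v=0$ use the fibering maximality $\ph(u_n)\ge\ph(sv_n)$ together with the lower bound $\rho_{\mathcal{H}}(\nabla(sv_n))\ge s^{p_-}$. The only notable difference is in the $v=0$ step: the paper bypasses your two-parameter $(\eps,s)$ bookkeeping entirely by invoking the strong continuity of $I_F$ (Lemma~\ref{Le:Propf}\ref{Propf:ConvergenceFterm}), which gives $\into F(x,sv_n)\dx\to 0$ directly from $sv_n\weak 0$ and hence $\ph(u_{n})\ge s^{p_-}/q_+ - 1$ for large $n$ without any auxiliary $\eps$; your explicit estimate via Lemma~\ref{Le:Propf}\ref{Propf:EpsilonUpperBoundF} is a valid but more laborious substitute for the same conclusion.
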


\begin{proof}
	Let $\{u_n\}_{n \in \N} \subseteq \mathcal{N}$ be a sequence such that $\norm{u_n}_{1,\mathcal{H},0} \xrightarrow{n \to \infty} + \infty$ and  let $y_n = \frac{u_n}{\norm{u_n}_{1,\mathcal{H},0}}$ for all $n \in \N$. Then, by the compact embedding $\WHzero \hookrightarrow \Lp{r}$ of Proposition \ref{Prop:EmbeddingsH} (iii), there exist a subsequence $\{y_{n_k}\}_{k \in \N}$ and $y \in \WHzero$ such that
	\begin{equation*}
		y_{n_k} \weak y \quad\text{in } \WHzero,
		\quad y_{n_k} \to y \quad\text{in } \Lp{r(\cdot)}  \text{ and pointwisely a.\,e.\,in }\Omega.
	\end{equation*}

	{\bf Claim:} $y = 0$

	Assume that $y \neq 0$. By Proposition \ref{Prop:PropertiesModularDoublePhase} (iv) and for $k \geq k_0$ such that $\norm{u_{n_k}}_{1,\mathcal{H},0} \geq 1$ we obtain
	\begin{align*}
		\ph (u_{n_k}) & \leq \frac{1}{p_-} \varrho_{p(\cdot)} ( \nabla  u_{n_k} ) +
		\frac{1}{q_-} \varrho_{q(\cdot),\mu} ( \nabla u_{n_k} ) - \into F(x, u_{n_k}) \dx \\
		& \leq \frac{1}{p_-} \norm{u_{n_k}}_{1,\mathcal{H},0}^{q_+} - \into F(x, u_{n_k}) \dx.
	\end{align*}
	Therefore, dividing by $\norm{u_{n_k}}_{1,\mathcal{H},0}^{q_+}$, one gets
	\begin{equation*}
		\frac{\ph (u_{n_k})}{\norm{u_{n_k}}_{1,\mathcal{H},0}^{q_+}}
		\leq \frac{1}{p_-} - \into \frac{F(x, u_{n_k})}{\abs{u_{n_k}}^{q_+}} \abs{y_{n_k}}^{q_+} \dx
		\to - \infty\quad\text{as }k \to \infty,
	\end{equation*}
	where this limit follows from \eqref{Asf:GrowthInfty}, because
	\begin{equation*}
		\lim\limits_{k \to \infty} \frac{F(x, u_{n_k})}{\abs{u_{n_k}}^{q_+}} \abs{y_{n_k}}^{q_+} = + \infty \quad \text{for all } x \in \Omega \text{ with } y(x) \neq 0,
	\end{equation*}
	and also by Fatou's Lemma and Lemma \ref{Le:Propf} \ref{Propf:Boundedbelow}, taking $\Omega_0 = \{ x \in \Omega : y(x) = 0 \}$ in
	\begin{align*}
		\into \frac{F(x, u_{n_k})}{\abs{u_{n_k}}^{q_+}} \abs{y_{n_k}}^{q_+} \dx
		 & = \int_{\Omega \setminus \Omega_0} \frac{F(x, u_{n_k})}{\abs{u_{n_k}}^{q_+}} \abs{y_{n_k}}^{q_+} \dx + \int_{\Omega_0} \frac{F(x, u_{n_k})}{\norm{u_{n_k}}_{1,\mathcal{H},0}^{q_+}} \dx \\
		 & \geq \int_{\Omega \setminus \Omega_0} \frac{F(x, u_{n_k})}{\abs{u_{n_k}}^{q_+}} \abs{y_{n_k}}^{q_+} \dx - \frac{M |\Omega| }{\norm{u_{n_k}}_{1,\mathcal{H},0}^{q_+}}
		\xrightarrow{k \to \infty} + \infty.
	\end{align*}
	However, by Proposition \ref{Prop:NehariManifoldProps} we know that $\ph (u_n) > 0$ for all $n \in \N$, so the limit above yields a contradiction. This finishes the proof of the Claim.

	Now fix any $t > 1$. As $u_{n_k} \in \mathcal{N}$,  Proposition \ref{Prop:PropertiesModularDoublePhase} (iv) and Proposition \ref{Prop:NehariManifoldProps} yield for all $k \in \N$
	\begin{align*}
		\ph (u_{n_k}) & \geq \ph (t y_{n_k})
		\geq \frac{1}{p_+} \varrho_{p(\cdot)} ( \nabla (t y_{n_k}) ) +
		\frac{1}{q_+} \varrho_{q(\cdot),\mu} ( \nabla (t y_{n_k}) ) - \into F(x, ty_{n_k})                   \\
		              & \geq \frac{1}{q_+} \norm{t y_{n_k}}_{1,\mathcal{H},0}^{p_-} - \into F(x, ty_{n_k})
		= \frac{t^{p_-}}{q_+} - \into F(x, ty_{n_k}) \dx.
	\end{align*}
	Furthermore, as $t y_{n_k} \weak 0$, by the strong continuity of the $F$ term (see Lemma \ref{Le:Propf} \ref{Propf:ConvergenceFterm}) there exists $k_0 \in \N$ such that for $k \geq k_0$
	\begin{equation*}
		\ph (u_{n_k}) \geq \frac{t^{p_-}}{q_+} - 1.
	\end{equation*}
	As the choice of $t>1$ was arbitrary, $\ph (u_{n_k}) \xrightarrow{k \to \infty} + \infty$. The subsequence principle yields the result for the whole sequence, i.e.\,$\ph (u_n) \xrightarrow{n \to \infty} + \infty$.
\end{proof}

Now we will make use of the properties proved in the previous results to obtain the existence of a minimizer of our energy functional $\ph$ restricted to $\mathcal{N}_0$.

\begin{proposition}
	Let \eqref{H1} be satisfied and $f$ fulfill \eqref{Asf:Caratheodory}, \eqref{Asf:WellDef}, \eqref{Asf:GrowthInfty},  \eqref{Asf:GrowthZero} and \eqref{Asf:QuotientMono}. Then
	\begin{equation*}
		\inf_{u \in \mathcal{N}} \ph(u) > 0
		\quad \text{ and } \quad
		\inf_{u \in \mathcal{N}_0} \ph(u) > 0.
	\end{equation*}
\end{proposition}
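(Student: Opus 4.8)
The plan is to reduce both statements to the mountain-pass geometry of Proposition~\ref{Prop:RingOfMountains} combined with the fibering-type characterization of $\mathcal{N}$ from Proposition~\ref{Prop:NehariManifoldProps}; no genuinely new estimate is needed.

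First I would fix $\delta > 0$ as provided by Proposition~\ref{Prop:RingOfMountains} (whose hypothesis $q_+ < r_-$ is granted here by Lemma~\ref{Le:Propf}\,\ref{Propf:q<r}, since $f$ fulfills \eqref{Asf:WellDef} and \eqref{Asf:GrowthInfty}) and set $m_\delta := \inf_{\norm{v}_{1,\mathcal H,0} = \delta} \ph(v) > 0$. Let $u \in \mathcal{N}$ be arbitrary; then $u \neq 0$, and since $1\cdot u = u \in \mathcal{N}$, the uniqueness part of Proposition~\ref{Prop:NehariManifoldProps} forces $t_u = 1$, whence $\ph(u) = \max_{t > 0} \ph(tu) \geq \ph(t_0 u)$ for every $t_0 > 0$. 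Taking $t_0 = \delta / \norm{u}_{1,\mathcal H,0}$ gives $\norm{t_0 u}_{1,\mathcal H,0} = \delta$, so $\ph(u) \geq \ph(t_0 u) \geq m_\delta$. As $m_\delta$ is independent of $u$, this proves $\inf_{u \in \mathcal{N}} \ph(u) \geq m_\delta > 0$.

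For the second infimum, take $u \in \mathcal{N}_0$, so $u^+ \in \mathcal{N}$ and $-u^- \in \mathcal{N}$ (in particular $u^\pm \in \WHzero$ by Proposition~\ref{Prop:ClosedTruncations}, and $u^+ \neq 0 \neq u^-$). The elementary fact I would use is the additive splitting $\ph(u) = \ph(u^+) + \ph(-u^-)$. This holds because $\nabla u^+ = \nabla u\, \mathbb{1}_{\{u>0\}}$ and $\nabla(-u^-) = \nabla u\, \mathbb{1}_{\{u<0\}}$ a.e., with $\nabla u = 0$ a.e.\ on $\{u=0\}$, so the modular part of $\ph(u^+) + \ph(-u^-)$ recovers that of $\ph(u)$; and $F(x,u^+) = F(x,u)$ on $\{u>0\}$, $F(x,-u^-) = F(x,u)$ on $\{u<0\}$, both being $0$ elsewhere thanks to $F(x,0)=0$ (Lemma~\ref{Le:Propf}\,\ref{Propf:ZeroAtOrigin}), so the $F$-integrals add up as well. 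Applying the first-step bound to each of $u^+$ and $-u^-$ yields $\ph(u) = \ph(u^+) + \ph(-u^-) \geq 2 m_\delta$, hence $\inf_{u \in \mathcal{N}_0} \ph(u) \geq 2 m_\delta > 0$.

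I do not expect a real obstacle: the argument is a corollary of Propositions~\ref{Prop:RingOfMountains} and \ref{Prop:NehariManifoldProps}. The only two points that deserve care are, first, that the lower bound on $\mathcal{N}$ has to be uniform even though $\mathcal{N}$ contains functions of arbitrarily large norm — this is exactly what the rescaling $t_0 = \delta/\norm{u}_{1,\mathcal H,0}$ handles — and second, the verification of the additive decomposition of $\ph$ over positive and negative parts, which relies on the a.e.\ disjointness of the supports of $\nabla u^+$ and $\nabla u^-$ and on $f(x,\cdot)$ vanishing at the origin.
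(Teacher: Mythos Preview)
Your proposal is correct and follows essentially the same route as the paper: rescale any $u\in\mathcal{N}$ onto the sphere of radius $\delta$ via Proposition~\ref{Prop:NehariManifoldProps} to pick up the uniform lower bound $m_\delta$ from Proposition~\ref{Prop:RingOfMountains}, then split $\ph(u)=\ph(u^+)+\ph(-u^-)$ for $u\in\mathcal{N}_0$ to get $2m_\delta$. Your write-up is in fact more careful than the paper's, spelling out why $t_u=1$ for $u\in\mathcal{N}$, why $q_+<r_-$ holds, and why the additive decomposition is valid.
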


\begin{proof}
	Let $u \in \mathcal{N}$, by Propositions \ref{Prop:RingOfMountains} and \ref{Prop:NehariManifoldProps} it follows
	\begin{equation*}
		\ph (u)
		\geq \ph \left( \frac{\delta}{\norm{u}_{1,\mathcal{H},0}} u \right)
		\geq \inf_{\norm{u}_{1,\mathcal{H},0} = \delta} \ph(u)
		> 0.
	\end{equation*}
	Hence
	\begin{equation*}
		\inf_{u \in \mathcal{N}} \ph(u)
		\geq \inf_{\norm{u}_{1,\mathcal{H},0} = \delta} \ph(u)
		> 0.
	\end{equation*}
	Also, as for each $u \in \mathcal{N}_0$ it holds that $u^+, - u ^- \in \mathcal{N}$, it follows that
	\begin{equation*}
		\ph (u)
		= \ph(u^+) + \ph(-u^-)
		\geq 2 \inf_{u \in \mathcal{N}} \ph(u)
		> 0.
	\end{equation*}
	Therefore
	\begin{equation*}
		\inf_{u \in \mathcal{N}_0} \ph(u)
		\geq 2 \inf_{u \in \mathcal{N}} \ph(u)
		> 0.
	\end{equation*}
\end{proof}

\begin{proposition}\label{Prop:MinimizerExistence}
	Let \eqref{H1} be satisfied and $f$ fulfill \eqref{Asf:Caratheodory}, \eqref{Asf:WellDef}, \eqref{Asf:GrowthInfty},  \eqref{Asf:GrowthZero} and \eqref{Asf:QuotientMono}. Then there exists $w_0 \in \mathcal{N}_0$ such that $\ph(w_0) = \inf_{u \in \mathcal{N}_0} \ph(u)$.
\end{proposition}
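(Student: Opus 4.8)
The plan is to take a minimizing sequence $\{u_n\}_{n \in \N} \subseteq \mathcal{N}_0$ with $\ph(u_n) \to m_0 := \inf_{u \in \mathcal{N}_0} \ph(u)$ and extract a limit that both lies in $\mathcal{N}_0$ and attains the infimum. First I would observe that, since $u_n \in \mathcal{N}_0$, we have $u_n^+, -u_n^- \in \mathcal{N}$ and $\ph(u_n) = \ph(u_n^+) + \ph(-u_n^-)$, and by the previous proposition both summands are bounded below by $\inf_{\mathcal{N}} \ph > 0$. Hence $\ph(u_n^\pm)$ are bounded sequences of real numbers. By Proposition~\ref{Prop:Coercivity} applied to the sequences $\{u_n^+\}$ and $\{-u_n^-\}$ in $\mathcal{N}$ (note each is in $\mathcal{N}$), boundedness of $\ph(u_n^\pm)$ forces $\norm{u_n^+}_{1,\mathcal{H},0}$ and $\norm{u_n^-}_{1,\mathcal{H},0}$ to be bounded, so $\{u_n\}_{n \in \N}$ is bounded in $\WHzero$. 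By reflexivity and Propositions~\ref{Prop:EmbeddingsH}(iii) and \ref{Prop:ClosedTruncations}, up to a subsequence $u_n \weak w_0$ in $\WHzero$, $u_n \to w_0$ in $\Lp{r(\cdot)}$ and a.\,e.\ in $\Omega$, and likewise $u_n^\pm \to w_0^\pm$ in $\Lp{r(\cdot)}$ and a.\,e.; moreover $u_n^+ \weak w_0^+$ and $-u_n^- \weak -w_0^-$ in $\WHzero$ (weak convergence of truncations follows from a.\,e.\ convergence plus boundedness).

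The crucial step is to show $w_0^\pm \neq 0$, i.e.\ that neither positive nor negative part degenerates in the limit. Suppose $w_0^+ = 0$; since $u_n^+ \in \mathcal{N}$ means $\rho_{\mathcal{H}}(\nabla u_n^+) = \into f(x,u_n^+) u_n^+ \dx$, and the right-hand side tends to $\into f(x,0)\cdot 0\,\dx = 0$ by the strong continuity of the $f$-term (Lemma~\ref{Le:Propf}\ref{Propf:ConvergenceFterm}, testing with $u_n^+$ whose norm is bounded, together with $u_n^+ \weak 0$), we get $\rho_{\mathcal{H}}(\nabla u_n^+) \to 0$, hence $\norm{u_n^+}_{1,\mathcal{H},0} \to 0$ by Proposition~\ref{Prop:PropertiesModularDoublePhase}(v). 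But Proposition~\ref{Prop:RingOfMountains} together with Proposition~\ref{Prop:NehariManifoldProps} gives a uniform lower bound $\norm{v}_{1,\mathcal{H},0} \geq \delta$ for every $v \in \mathcal{N}$ (any element $v$ of $\mathcal{N}$ satisfies $\ph(v) \geq \ph((\delta/\norm{v}_{1,\mathcal{H},0})v)$, and if $\norm{v}_{1,\mathcal{H},0}<\delta$ this contradicts that $t \mapsto \ph(tv)$ is increasing before its unique maximum); this contradicts $\norm{u_n^+}_{1,\mathcal{H},0} \to 0$. Symmetrically $w_0^- \neq 0$.

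Next I would use Proposition~\ref{Prop:NehariManifoldProps}: since $w_0^+ \neq 0$ there is a unique $s_+ > 0$ with $s_+ w_0^+ \in \mathcal{N}$, and a unique $s_- > 0$ with $-s_- w_0^- \in \mathcal{N}$; set $v_0 = s_+ w_0^+ - s_- w_0^-$, so $v_0 \in \mathcal{N}_0$. The goal is then $\ph(v_0) = m_0$. For the lower bound direction, by weak lower semicontinuity of the modular part of $\ph$ and strong continuity of $I_f$ (Lemma~\ref{Le:Propf}\ref{Propf:ConvergenceFterm}), $\ph(w_0^+) \leq \liminf_n \ph(u_n^+)$ and similarly for the negative part; combined with Proposition~\ref{Prop:NehariManifoldProps}, which gives $\ph(u_n^+) = \max_{t \geq 0} \ph(t u_n^+) \geq \ph(s_+ u_n^+)$, and using $s_+ u_n^+ \to s_+ w_0^+$ in $\Lp{r(\cdot)}$ with $s_+ u_n^+ \weak s_+ w_0^+$ in $\WHzero$, we get
\begin{equation*}
	\ph(s_+ w_0^+) \leq \liminf_{n \to \infty} \ph(s_+ u_n^+) \leq \liminf_{n \to \infty} \ph(u_n^+),
\end{equation*}
and likewise $\ph(-s_- w_0^-) \leq \liminf_n \ph(-u_n^-)$; adding these,
\begin{equation*}
	\ph(v_0) = \ph(s_+ w_0^+) + \ph(-s_- w_0^-) \leq \liminf_{n \to \infty}\big(\ph(u_n^+) + \ph(-u_n^-)\big) = m_0.
\end{equation*}
Since $v_0 \in \mathcal{N}_0$ the reverse inequality $\ph(v_0) \geq m_0$ is automatic, so $\ph(v_0) = m_0$ and $w_0 := v_0$ is the desired minimizer.

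The main obstacle I anticipate is the non-degeneracy step $w_0^\pm \neq 0$; everything else is bookkeeping with weak lower semicontinuity and the compact embedding. The subtlety there is that a priori one only controls $\ph$ on $\mathcal{N}_0$, not the individual norms, so one must route through the uniform positive lower bound on $\norm{\cdot}_{1,\mathcal{H},0}$ over $\mathcal{N}$ (which in turn rests on the mountain-pass geometry of Proposition~\ref{Prop:RingOfMountains} and the fibering structure of Proposition~\ref{Prop:NehariManifoldProps}) and the vanishing of $\into f(x,u_n^\pm)u_n^\pm\dx$ under weak convergence to $0$. A secondary technical point worth stating carefully is that $s_\pm$ stay bounded away from $0$ and $\infty$, but since the argument only needs them finite and positive (guaranteed by Proposition~\ref{Prop:NehariManifoldProps}), no extra work is required.
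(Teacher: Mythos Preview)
Your proposal is correct and follows essentially the same route as the paper: boundedness of the minimizing sequence via coercivity of $\ph|_{\mathcal{N}}$, non-degeneracy of the weak limits of $\pm u_n^\pm$ via the Nehari identity combined with strong continuity of the $f$-term, projection back onto $\mathcal{N}_0$ via the fibering map of Proposition~\ref{Prop:NehariManifoldProps}, and conclusion by weak lower semicontinuity together with the fibering inequality $\ph(\pm u_n^\pm) \geq \ph(\pm s_\pm u_n^\pm)$. The only cosmetic difference is in the non-degeneracy step, where the paper reaches the contradiction directly from $0 < \inf_{\mathcal{N}} \ph \leq \ph(\pm u_{n_k}^\pm) \to \ph(0) = 0$ rather than through a uniform norm lower bound on $\mathcal{N}$; your parenthetical justification of that lower bound is slightly imprecise, but the claim is true and either route closes the argument.
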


\begin{proof}
	We will proceed by the direct method of calculus of variations. Let $\{u_n\}_{n \in \N} \subseteq \mathcal{N}_0$ be a minimizing sequence, that is,
	\begin{equation*}
		\lim\limits_{n \to \infty} \ph(u_n) = \inf_{u \in \mathcal{N}_0} \ph(u).
	\end{equation*}
	In the following, recall that for $v \in \WHzero$, we have $v^\pm \in \WHzero$ by Proposition \ref{Prop:ClosedTruncations}. As $\ph(u_n) = \ph(u_n^+) \mathop{+} \ph(-u_n^-)$ for all $n \in \N$, by Proposition \ref{Prop:NehariManifoldProps} it holds that $\ph(\pm u_n^\pm) > 0$ for all $n \in \N$, and by the coercivity of Proposition \ref{Prop:Coercivity} it follows that $\{\pm u_n ^\pm \}_{n \in \N}$ are both bounded sequences in $\mathcal{N}$. By the compact embedding $\WHzero \hookrightarrow \Lp{r}$ of Proposition \ref{Prop:EmbeddingsH} (iii), there exist subsequences $\{\pm u_{n_k} ^\pm\}_{k \in \N}$ and $u_\pm \in \WHzero$ such that
	\begin{equation}\label{Eq:MinimizingSubsequence}
		\pm u_{n_k} ^\pm \weak u_\pm \text{ in } \WHzero, \; \pm u_{n_k} ^\pm \to u_\pm \text{ in } \Lp{r(\cdot)} \text{ and pointwisely a.$\,$e. with}
		\begin{cases}
			u_{+} \geq 0, \\ u_{-} \leq 0, \\ u_+ u_- = 0.
		\end{cases}
		\mkern-25mu
	\end{equation}
	Assume that $u_\pm = 0$. Then, as $\pm u_{n_k} ^\pm \in \mathcal{N}$, we get
	\begin{equation*}
		0 = \lan \ph'(\pm u_{n_k} ^\pm) , \pm u_{n_k} ^\pm  \ran
		= \rho_{\mathcal{H}}( \pm \nabla u_{n_k} ^\pm) - \into f(x, \pm u_{n_k}^\pm) ( \pm u_{n_k} ^\pm ) \dx,
	\end{equation*}
	and by the strong continuity of the $f$ term (see Lemma \ref{Le:Propf} \ref{Propf:ConvergenceFterm}), it follows that $\rho_{\mathcal{H}}( \pm \nabla u_{n_k} ^\pm)$ $\to 0$. From Proposition \ref{Prop:PropertiesModularDoublePhase} (v) we derive that $\pm u_{n_k} ^\pm \to 0$ in $\WHzero$, so
	\begin{equation*}
		0 < \inf_{u \in \mathcal{N}} \ph(u)
		\leq \ph (\pm u_{n_k} ^\pm)
		\xrightarrow{k \to \infty} \ph (0) = 0,
	\end{equation*}
	which is a contradiction. Hence $u_+ \neq 0 \neq u_-$.

	By Proposition \ref{Prop:NehariManifoldProps}, there exist $t_\pm > 0$ such that $t_\pm u_\pm \in \mathcal{N}$. Take $w_0 = t_+ u_+ + t_- u_-$, by \eqref{Eq:MinimizingSubsequence} it holds that $\pm (w_0)^\pm = t_\pm u_\pm$ and hence $w_0 \in \mathcal{N}_0$.

	Finally, observe that $\ph$ is sequentially weakly lower semicontinuous because the mapping $I \colon \WHzero \to \R$ given by
	\begin{equation*}
		I(u) = \into  \frac{\abs{\nabla u}^{p(x)}}{p(x)} \dx \;
		+ \into \mu(x) \frac{\abs{ \nabla u}^{q(x)}}{q(x)}  \dx
	\end{equation*}
	is sequentially weakly lower semicontinuous (note that each addend is convex and continuous, hence each of them is sequentially weakly lower semicontinuous)
	and the $F$ term is strongly continuous by Lemma \ref{Le:Propf} \ref{Propf:ConvergenceFterm}. As a consequence, together with Proposition \ref{Prop:NehariManifoldProps}, this gives
	\begin{align*}
		\inf_{u \in \mathcal{N}_0} \ph(u)
		 & = \lim\limits_{k \to \infty} \ph(u_{n_k})
		= \lim\limits_{k \to \infty} \ph(u_{n_k}^+) + \ph(- u_{n_k} ^-)                  \\
		 & \geq \liminf\limits_{k \to \infty} \ph(t_+ u_{n_k}^+) + \ph(- t_- u_{n_k} ^-) \\
		 & \geq \ph(t_+ u_+) + \ph(t_- u_-)
		= \ph(w_0)
		\geq \inf_{u \in \mathcal{N}_0} \ph(u).
	\end{align*}
	Hence, $\ph(w_0)
		= \inf_{u \in \mathcal{N}_0} \ph(u)$.
\end{proof}

Finally, the minimizer obtained in Proposition \ref{Prop:MinimizerExistence} turns out to be a critical point of $\ph$ as well and so it is a weak solution of problem \eqref{Eq:Problem}.

\begin{proposition}\label{Prop:Sing-changing-CritPoint}
	Let \eqref{H1} be satisfied and $f$ fulfill \eqref{Asf:Caratheodory}, \eqref{Asf:WellDef}, \eqref{Asf:GrowthInfty},  \eqref{Asf:GrowthZero} and \eqref{Asf:QuotientMono}. Let $w_0 \in \mathcal{N}_0$ such that $\ph(w_0) = \inf_{u \in \mathcal{N}_0} \ph(u)$. Then $w_0$ is a critical point of $\ph$.
\end{proposition}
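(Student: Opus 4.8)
We argue by contradiction along the classical Nehari-manifold scheme based on the quantitative deformation lemma (Lemma~\ref{Le:DeformationLemma}). Assume $\ph'(w_0) \neq 0$; we will contradict $\ph(w_0) = \inf_{\mathcal{N}_0}\ph =: c$. Write $u_+ := w_0^+ \geq 0$ and $u_- := -w_0^- \leq 0$, so that $w_0 = u_+ + u_-$, $u_+ u_- = 0$ a.\,e.\,in $\Omega$, and, since $w_0 \in \mathcal{N}_0$, both $u_+, u_- \in \mathcal{N}$; in particular $u_\pm \neq 0$. As $\ph \in C^1(\WHzero)$ with continuous derivative, there are $\delta_0, \lambda_0 > 0$ such that $\|\ph'(u)\|_* \geq \lambda_0$ for all $u \in \overline{B_{2\delta_0}(w_0)}$. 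Consider $g(s,t) := \ph(su_+ + tu_-) = \ph(su_+) + \ph(tu_-)$ on $[0,\infty)^2$, where the splitting uses that the supports of $su_+$ and $tu_-$ are disjoint. By Proposition~\ref{Prop:NehariManifoldProps} applied to $u_+$ and to $u_-$ separately — for each of which the unique fibering parameter equals $1$ — every summand strictly increases on $(0,1)$ and strictly decreases on $(1,\infty)$, so $g(s,t) < g(1,1) = c$ for all $(s,t) \in [0,\infty)^2$ with $(s,t) \neq (1,1)$.

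Fix $\varepsilon_0 \in (0,1)$ small enough that $S := \{su_+ + tu_- : (s,t) \in D\}$, where $D := [1-\varepsilon_0, 1+\varepsilon_0]^2$, satisfies $S \subseteq \overline{B_{\delta_0}(w_0)}$; this is legitimate since $\|su_+ + tu_- - w_0\|_{1,\mathcal{H},0} \leq \varepsilon_0(\|u_+\|_{1,\mathcal{H},0} + \|u_-\|_{1,\mathcal{H},0})$ on $D$. Put $\kappa := c - \max_{\partial D} g > 0$. We also use that $\mathcal{N}$ stays away from the origin: there is $m > 0$ with $\|v\|_{r(\cdot)} \geq m$ for every $v \in \mathcal{N}$; indeed, for $v \in \mathcal{N}$ the identity $\rho_{\mathcal{H}}(\nabla v) = \into f(x,v)v\,\dx$ together with \eqref{Asf:WellDef}, the embedding $\WHzero \hookrightarrow \Lp{r(\cdot)}$, and the positive lower bound of $\ph$ over $\mathcal{N}$ (a consequence of Propositions~\ref{Prop:RingOfMountains} and \ref{Prop:NehariManifoldProps}) forces such a bound. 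Denote by $C$ the norm of the embedding $\WHzero \hookrightarrow \Lp{r(\cdot)}$. Now choose $\delta \in (0, \delta_0/2]$ so small that in addition $C\delta < (1-\varepsilon_0)m$, and then $\varepsilon \in (0, \min\{\kappa/2, \lambda_0\delta/8\})$. As $S_{2\delta} \subseteq B_{\delta_0+2\delta}(w_0) \subseteq B_{2\delta_0}(w_0)$, we get $\|\ph'(u)\|_* \geq \lambda_0 \geq 8\varepsilon/\delta$ for all $u \in \ph^{-1}([c-2\varepsilon, c+2\varepsilon]) \cap S_{2\delta}$, so Lemma~\ref{Le:DeformationLemma} yields a deformation $\eta \in C([0,1] \times \WHzero; \WHzero)$ with properties (i)--(vi).

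Set $h(s,t) := \eta(1, su_+ + tu_-)$, a continuous map $D \to \WHzero$. On $\partial D$ one has $\ph(su_+ + tu_-) = g(s,t) \leq c - \kappa < c - 2\varepsilon$, hence $su_+ + tu_- \notin \ph^{-1}([c-2\varepsilon, c+2\varepsilon])$ and property~(i) gives $h(s,t) = su_+ + tu_-$ there. Define $\Psi \colon D \to \R^2$ by $\Psi(s,t) := (\lan \ph'(h(s,t)^+), h(s,t)^+\ran, \lan \ph'(-h(s,t)^-), -h(s,t)^-\ran)$; it is continuous, being a composition of $(s,t) \mapsto su_+ + tu_-$, $\eta(1,\cdot)$, the continuous truncations $v \mapsto v^+$ and $v \mapsto -v^-$, and the map $v \mapsto \lan \ph'(v), v\ran$. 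On $\partial D$, where $h = \mathrm{id}$, we have $h(s,t)^+ = su_+$ and $-h(s,t)^- = tu_-$, so the first component of $\Psi$ equals $\lan \ph'(su_+), su_+\ran = s\,\lan \ph'(su_+), u_+\ran$; since $s \mapsto \ph(su_+)$ has, by Proposition~\ref{Prop:NehariManifoldProps}, its unique critical point on $(0,\infty)$ at $s = 1$ and this is a strict maximum, the last expression is $>0$ at $s = 1-\varepsilon_0$ and $<0$ at $s = 1+\varepsilon_0$, uniformly in $t$; the second component behaves analogously in $t$. Hence $\Psi$ does not vanish on $\partial D$, and the Poincar\'e--Miranda theorem provides $(s_0, t_0) \in \mathrm{int}\,D$ with $\Psi(s_0, t_0) = 0$. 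Put $w := h(s_0, t_0)$, so $\lan \ph'(w^+), w^+\ran = 0 = \lan \ph'(-w^-), -w^-\ran$. Moreover $w^+ \neq 0$: otherwise, by property~(iv) and the pointwise inequality $|\zeta^+ - \xi^+| \leq |\zeta - \xi|$ (which gives $\|\zeta^+ - \xi^+\|_{r(\cdot)} \leq \|\zeta - \xi\|_{r(\cdot)} \leq C\|\zeta - \xi\|_{1,\mathcal{H},0}$) one would obtain $(1-\varepsilon_0)m \leq s_0\|u_+\|_{r(\cdot)} = \|(s_0 u_+ + t_0 u_-)^+ - w^+\|_{r(\cdot)} \leq C\delta$, contradicting the choice of $\delta$; symmetrically $w^- \neq 0$. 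Thus $w^+, -w^- \in \mathcal{N}$, i.e.\,$w \in \mathcal{N}_0$. On the other hand $s_0 u_+ + t_0 u_- \in S$ and $\ph(s_0 u_+ + t_0 u_-) = g(s_0, t_0) \leq c$, so property~(ii) gives $\ph(w) = \ph(\eta(1, s_0 u_+ + t_0 u_-)) \leq c - \varepsilon < c = \inf_{\mathcal{N}_0}\ph$, which is impossible since $w \in \mathcal{N}_0$. Therefore $\ph'(w_0) = 0$.

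The delicate point is guaranteeing that the deformed competitor $w$ keeps both parts nontrivial, so that $w \in \mathcal{N}_0$: this forces one to weigh the displacement bound~(iv) of the deformation against the $1$-Lipschitz behaviour of $v \mapsto v^+$ in $\Lp{r(\cdot)}$ and the positive lower bound of $\|\cdot\|_{r(\cdot)}$ over $\mathcal{N}$, and it dictates the order in which $\varepsilon_0$, $\delta$, $\varepsilon$ have to be chosen (so that $2\varepsilon < \kappa$ and $C\delta < (1-\varepsilon_0)m$ hold simultaneously). The remaining ingredients — the fibering analysis of Proposition~\ref{Prop:NehariManifoldProps}, the Poincar\'e--Miranda argument on $D$, and the bookkeeping with Lemma~\ref{Le:DeformationLemma} — are routine.
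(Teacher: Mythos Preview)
Your proof is correct and follows essentially the same contradiction scheme as the paper's: strict maximum of the fibering map at $(1,1)$, quantitative deformation lemma, and a degree-type argument to land back in $\mathcal{N}_0$ below the infimum. The only differences are cosmetic: you invoke Poincar\'e--Miranda where the paper uses the Brouwer degree via its Cartesian-product property, and for the nontriviality of $w^\pm$ you pass through an $L^{r(\cdot)}$ lower bound on $\mathcal{N}$, whereas the paper uses the simpler direct estimate $\|w_0 - v\|_{1,\mathcal{H},0} \geq C_{\mathcal{H}}^{-1}\|w_0^\mp\|_{p_-}$ whenever $v^\mp = 0$.
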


\begin{proof}
	First, two key observations for the proof.
	\begin{enumerate} [label=(\alph*)]
		\item
			\label{Eq:SmallerThanInfimum}
			By Proposition \ref{Prop:NehariManifoldProps}, for any $(s,t) \in [0,\infty)^2 \setminus \{(1,1)\}$
			\begin{equation*}
				\ph(s w_0^+ - t w_0^-)
				= \ph(s w_0^+) + \ph(- t w_0^-)
				< \ph(w_0^+) + \ph(- w_0^-)
				= \ph(w_0)
				= \inf_{u \in \mathcal{N}_0} \ph(u).
			\end{equation*}
		\item
			\label{Eq:BallSing-changing}
			For any $v \in \WHzero$, as $w_0^+ \neq 0 \neq w_0^-$
			\begin{equation*}
				\norm{ w_0 - v }_{1,\mathcal{H},0}
				\geq C_{\mathcal{H}}^{-1} \norm{ w_0 - v}_{p_-}
				\geq
				\begin{cases}
					C_{\mathcal{H}}^{-1} \norm{ w_0^- }_{p_-} & , \text{ if } v^- = 0, \\ C_{\mathcal{H}}^{-1} \norm{ w_0^+ }_{p_-} & , \text{ if } v^+ = 0,
				\end{cases}
			\end{equation*}
			where $C_\mathcal{H}>0$ is the constant of the embedding $\Wpzero{\mathcal{H}} \hookrightarrow \Lp{p_-}$. Hence, if for
			\begin{equation*}
				0 < \delta_0 < \min \left \{ C_{\mathcal{H}}^{-1} \norm{ w_0^+}_{p_-} , C_{\mathcal{H}}^{-1} \norm{ w_0^- }_{p_-} \right\},
			\end{equation*}
			we have $\norm{ w_0 - v }_{1,\mathcal{H},0} < \delta_0$,
			it follows that $v^+ \neq 0 \neq v^-$.
	\end{enumerate}

	The proof consists of arguing by contradiction using the deformation lemma (see Lemma \ref{Le:DeformationLemma}). Assume that $\ph'(w_0) \neq 0$. Then there exist $\alpha, \delta_1 > 0$ such that
	\begin{equation}
		\label{Eq:ContradictionNehari}
		\norm{\ph'(u)}_* \geq \alpha \quad\text{for all } u \in \WHzero \text{ with } \norm{ u - w_0 }_{1,\mathcal{H},0} < 3 \delta_1.
	\end{equation}

	Let
	\begin{equation*}
		\delta = \min \{ \delta_0 / 2, \delta_1 \}.
	\end{equation*}
	Consider again the mapping from \ref{Eq:SmallerThanInfimum} defined on $[0,\infty)^2 \to \WHzero$ and given by $(s,t) \mapsto s w_0^+ - t w_0^-$, which is continuous. Hence given $\delta > 0$, there exists $1 > \gamma > 0$ such that
	\begin{align}\label{Eq:gammachoice}
		\begin{split}
			&\norm{ s w_0^+ - t w_0^- - w_0}_{1,\mathcal{H},0} < \delta \text{ for all } (s,t) \in [0,\infty)^2\\
			&\text{ such that } \max\{ \abs{s-1} , \abs{t-1} \} < \gamma.
		\end{split}
	\end{align}

	Let
	\begin{equation*}
		D = (1 - \gamma, 1 + \gamma)^2.
	\end{equation*}
	By \ref{Eq:SmallerThanInfimum} it follows that
	\begin{equation*}
		\beta = \max_{(s,t) \in \partial D} \ph(s w_0^+ - t w_0^-)
		< \ph(w_0)
		= \inf_{u \in \mathcal{N}_0} \ph (u).
	\end{equation*}

	With the notation of the quantitative deformation lemma (Lemma \ref{Le:DeformationLemma}), let
	\begin{equation*}
		S = B(w_0, \delta), \quad
		c = \inf_{u \in \mathcal{N}_0} \ph(u), \quad
		\eps = \min \left\{ \frac{ c - \beta}{4} , \frac{\alpha \delta}{8} \right\} \quad
		\text{and}\quad \delta \text{ be as above.}
	\end{equation*}
	From \eqref{Eq:ContradictionNehari} it follows that
	\begin{equation*}
		\norm{\ph'(u)}_* \geq \alpha \geq \frac{8 \eps}{\delta} \quad \text{for all } u \in S_{2 \delta} = B(w_0, 3 \delta),
	\end{equation*}
	so all the assumptions of the lemma are fulfilled. Furthermore
	\begin{align}
		\label{Eq:DeformBorderCond}
		\ph(s w_0^+ - t w_0^-)
		\leq \beta + c - c
		< c - \left( \frac{c - \beta}{2} \right)
		\leq c - 2 \eps \quad \text{for all } (s,t) \in \partial D.
	\end{align}

	Altogether, by the quantitative deformation lemma there exists a mapping $\eta \in C ([0,1] \times \WHzero , \WHzero)$ such that
	\begin{enumerate} [label=(\roman*),font=\normalfont]
		\item
			$\eta (t, u) = u$, if $t = 0$ or if $u \notin \ph^{-1}([c - 2\eps, c + 2\eps]) \cap S_{2 \delta}$,
		\item
			$\ph( \eta( 1, u ) ) \leq c - \eps$ for all $u \in \ph^{-1} ( ( - \infty, c + \eps] ) \cap S $,
		\item
			$\eta(t, \cdot )$ is an homeomorphism of $\WHzero$ for all $t \in [0,1]$,
		\item
			$\norm{\eta(t, u) - u} \leq \delta$ for all $u \in \WHzero$ and $t \in [0,1]$,
		\item
			$\ph( \eta( \cdot , u))$ is decreasing for all $u \in \WHzero$,
		\item
			$\ph(\eta(t, u)) < c$ for all $u \in \ph^{-1} ( ( - \infty, c] ) \cap S_\delta$ and $t \in (0, 1]$.
	\end{enumerate}

	We define now $h \colon [0,\infty)^2 \to \WHzero$ given by $h(s,t) = \eta ( 1 , s w_0^+ - t w_0^-)$, which has the following properties:
	\begin{enumerate} [label=(\roman*),font=\normalfont]
		\setcounter{enumi}{6}
		\item
			$h \in C \left( [0,\infty)^2 , \WHzero \right)$,
		\item
			$\ph( h(s,t) ) \leq c - \eps$ for all $(s,t) \in D$, by (ii), \ref{Eq:SmallerThanInfimum} and \eqref{Eq:gammachoice},
		\item
			$h(s,t) = s w_0^+ - t w_0^-$ for all $(s,t) \in \partial D$, by (i) and \eqref{Eq:DeformBorderCond},
		\item
			$h(D) \subseteq S_\delta = B(w_0, 2 \delta)$, by (iv) and \eqref{Eq:gammachoice}.
	\end{enumerate}

	Consider the mappings $H_0,H_1 \colon (0,\infty)^2 \to \R^2$ defined by
	\begin{align*}
		 & H_0 (s,t) = \left( \; \lan \ph'(s w_0^+) , w_0^+ \ran \; , \; \lan \ph'(- t w_0^-) , - w_0^- \ran \; \right),                                   \\
		 & H_1 (s,t) = \left( \; \frac{1}{s} \lan \ph'(h^+(s,t)) , h^+(s,t) \ran \; , \; \frac{1}{t} \lan \ph'(- h^- (s,t)) , -h^- (s,t) \ran \; \right).
	\end{align*}
	As $\ph$ is a $C^1$ functional and by (vii) both are continuous functions. By Proposition \ref{Prop:NehariManifoldProps} we have
	\begin{align*}
		 & \lan \ph'(t w_0^+) , w_0^+ \ran > 0 \quad \text{ and } \quad \lan \ph'(- t w_0^-) , - w_0^- \ran > 0 \quad \text{for all }0 < t < 1, \\
		 & \lan \ph'(t w_0^+) , w_0^+ \ran < 0 \quad \text{ and } \quad \lan \ph'(- t w_0^-) , - w_0^- \ran < 0 \quad \text{for all } t > 1.
	\end{align*}
	Let us denote by $\deg (g,A,y)$ the Brouwer degree over $A \subseteq \R^n$ open and bounded of the function $g \in C(A,\R^N)$ at the value $y \in \R^N \setminus g(\partial A)$. By its Cartesian product property (see Dinca-Mawhin \cite[Lemma 7.1.1 and Theorem 7.1.1]{Dinca-Mawhin-2021}) and the one-dimensional case over intervals (in the same book \cite[Proposition 1.2.3]{Dinca-Mawhin-2021}) we have
	\begin{align*}
		\deg (H_0, D, 0)
		 & = \deg \left( \lan \ph'(s w_0^+) , w_0^+ \ran , (1 - \gamma, 1 + \gamma) , 0 \right)
		\deg \left( \lan \ph'(- t w_0^-) , - w_0^- \ran , (1 - \gamma, 1 + \gamma) , 0 \right)    \\
		 & = (-1) (-1) = 1.
	\end{align*}
	By (ix) we have ${H_0} _{|_{\partial D}} = {H_1} _{|_{\partial D}}$, so by the dependence on the boundary values of the Brouwer degree (see again the book by Dinca-Mawhin \cite[Corollary 1.2.7]{Dinca-Mawhin-2021}), it follows
	\begin{equation*}
		\deg (H_1, D, 0)
		= \deg (H_0, D, 0)
		= 1,
	\end{equation*}
	which by the solution property of the Brouwer degree  (in the same book \cite[Corollary 1.2.5]{Dinca-Mawhin-2021}) implies that there exists $(s_0 , t_0) \in D$ such that $H_1 (s_0 , t_0) = (0,0)$, i.e.
	\begin{equation*}
		\lan \ph'(h^+(s_0 , t_0)) , h^+(s_0 , t_0) \ran
		= 0
		= \lan \ph'(- h^- (s_0 , t_0)) , -h^- (s_0 , t_0) \ran.
	\end{equation*}
	Furthermore, by (x)
	\begin{equation*}
		\norm{h(s_0 , t_0) - w_0}_{1,\mathcal{H},0} \leq 2 \delta \leq \delta_0,
	\end{equation*}
	and then \ref{Eq:BallSing-changing} yields
	\begin{equation*}
		h^+(s_0 , t_0) \neq 0 \neq - h^-(s_0 , t_0).
	\end{equation*}
	Altogether, we have that $h(s_0 , t_0) \in \mathcal{N}_0$, which by (viii) fulfills $\ph(h(s_0 , t_0)) < c - \eps$. This is a contradiction with $c = \inf_{u \in \mathcal{N}_0} \ph(u)$ and the proof is complete.
\end{proof}

Now we can combine Theorems \ref{Th:BoundedSolutions}, \ref{Th:ConstantSignSolutions} and Propositions \ref{Prop:MinimizerExistence}, \ref{Prop:Sing-changing-CritPoint} to get the following result.
\begin{theorem}\label{Th:ThreeSolutions}
	Let \eqref{H1} be satisfied and $f$ fulfill  \eqref{Asf:Caratheodory}, \eqref{Asf:WellDef}, \eqref{Asf:GrowthInfty},  \eqref{Asf:GrowthZero}, \eqref{Asf:CeramiAssumption} and \eqref{Asf:QuotientMono}. Then there exist nontrivial weak solutions of problem \eqref{Eq:Problem} $u_0,v_0,w_0 \in \WHzero \cap \Lp{\infty}$ such that $u_0 \geq 0$, $v_0 \leq 0$ and $w_0$ has changing sign.
\end{theorem}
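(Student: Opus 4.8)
The plan is simply to assemble the theorem from the pieces already in place, checking at each step that the required hypotheses on $f$ are available. First I would apply Theorem \ref{Th:ConstantSignSolutions}: it requires \eqref{Asf:Caratheodory}, \eqref{Asf:WellDef}, \eqref{Asf:GrowthInfty}, \eqref{Asf:GrowthZero} and \eqref{Asf:CeramiAssumption}, all of which are assumed here, and it delivers nontrivial weak solutions $u_0, v_0 \in \WHzero \cap \Lp{\infty}$ of \eqref{Eq:Problem} with $u_0 \geq 0$ and $v_0 \leq 0$ a.e.\ in $\Omega$.

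For the third solution I would invoke, in order, Proposition \ref{Prop:MinimizerExistence} and Proposition \ref{Prop:Sing-changing-CritPoint}, both of which require \eqref{Asf:Caratheodory}, \eqref{Asf:WellDef}, \eqref{Asf:GrowthInfty}, \eqref{Asf:GrowthZero} and \eqref{Asf:QuotientMono}, again all available. The first produces $w_0 \in \mathcal{N}_0$ realizing the infimum $\inf_{u \in \mathcal{N}_0} \ph(u)$, and the second shows such a $w_0$ is a critical point of $\ph$, hence a weak solution of \eqref{Eq:Problem}. Since $w_0 \in \mathcal{N}_0$ means precisely that $w_0^+ \in \mathcal{N}$ and $-w_0^- \in \mathcal{N}$, and $0 \notin \mathcal{N}$ by definition, both $w_0^+$ and $w_0^-$ are nontrivial, so $w_0$ changes sign. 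To obtain $w_0 \in \Lp{\infty}$ I would note that $w_0$ solves \eqref{Eq:Problem}, which is the special case of \eqref{Eq:ProblemLinfty} given by $\mathcal{A}(x,t,\xi) = |\xi|^{p(x)-2}\xi + \mu(x)|\xi|^{q(x)-2}\xi$ and $\mathcal{B}(x,t,\xi) = f(x,t)$; the structure conditions \eqref{H3} hold trivially for this $\mathcal{A}$ (take $\alpha_1 = \alpha_2 = 1$ and any $\alpha_3 > 0$), and for $\mathcal{B}$ they follow from \eqref{Asf:WellDef} once one knows $p(x) < r(x) < p^*(x)$ for all $x \in \close$, which holds because $p(x) < q(x) \leq q_+ < r_- \leq r(x)$ by Lemma \ref{Le:Propf} \ref{Propf:q<r} and $r_+ < p^*_-$. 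Thus Theorem \ref{Th:BoundedSolutions} applies and yields $w_0 \in \Lp{\infty}$; this is exactly the reasoning already used at the end of the proof of Theorem \ref{Th:ConstantSignSolutions}.

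Finally I would remark that the three solutions are genuinely distinct: $w_0$ changes sign while $u_0 \geq 0$ and $v_0 \leq 0$, and $u_0 = v_0$ would force $u_0 \equiv 0$, contradicting nontriviality. I do not expect any real obstacle in this theorem, since it is merely a packaging of Theorems \ref{Th:BoundedSolutions} and \ref{Th:ConstantSignSolutions} together with Propositions \ref{Prop:MinimizerExistence} and \ref{Prop:Sing-changing-CritPoint}; the only point worth spelling out is the verification that \eqref{Eq:Problem} fits the general framework \eqref{Eq:ProblemLinfty}, so that Theorem \ref{Th:BoundedSolutions} may be invoked for $w_0$.
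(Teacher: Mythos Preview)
Your proposal is correct and follows exactly the approach indicated in the paper, which simply states that the theorem is obtained by combining Theorems \ref{Th:BoundedSolutions}, \ref{Th:ConstantSignSolutions} and Propositions \ref{Prop:MinimizerExistence}, \ref{Prop:Sing-changing-CritPoint}. Your write-up is in fact more detailed than the paper's, as you spell out the verification that \eqref{Eq:Problem} fits the framework of \eqref{H3} so that Theorem \ref{Th:BoundedSolutions} applies to $w_0$, and you justify explicitly why $w_0$ is sign-changing and distinct from $u_0$ and $v_0$.
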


\section{Nodal domains} \label{nodal-domains}

It is possible to derive further properties of the sign-changing solution obtained in the previous section. We will deal here with the number of nodal domains, that is, the number of maximal regions where it does not change sign. The usual definition of nodal domains for $u \in C(\Omega,\R)$ are the connected components of $\Omega \setminus Z$, where the set $Z = \{ x \in \Omega \,:\, u(x) = 0 \}$ is called the nodal set of $u$. However, since we do not know whether our solutions are continuous, this is not helpful. Thus we propose the following, alternative definition.

Let $u \in \WHzero$ and $A$ be a Borelian subset of $\Omega$ with $|A| > 0$. We say $A$ is a nodal domain of $u$ if
\begin{enumerate}[label=(\roman*),font=\normalfont]
	\item
		$u \geq 0$ a.\,e.\,on $A$ or $u \leq 0$ a.\,e.\,on $A$;
	\item
		$0 \neq u 1_A \in \WHzero$;
	\item
		$A$ is minimal w.r.t.\,\textnormal{(i)} and \textnormal{(ii)}, i.e., if $B \subseteq A$ with $B$ being a Borelian subset of $\Omega$, $|B| > 0$ and $B$ satisfies \textnormal{(i)} and \textnormal{(ii)}, then $|A \setminus B| = 0$.
\end{enumerate}

\begin{remark}$~$
	\begin{enumerate}
		\item[\textnormal{(a)}]
			The reason to assume (ii) in the previous definition is that we want to rule out vanishing sets and sets such that the weak derivative of $u 1_A$ does not exist.
		\item[\textnormal{(b)}]
			The relation in general between the usual definition and this one is unknown for the authors.
	\end{enumerate}
\end{remark}

In order to prove the extra properties of our sign-changing solution we also need the assumption \eqref{Asf:IntMono}.

\begin{proposition}\label{Prop:NodalDomains}
	Let \eqref{H1} be satisfied and $f$ fulfill \eqref{Hf}. Then, any minimizer of $\ph_{|_{\mathcal{N}_0}}$ (which by Proposition \ref{Prop:Sing-changing-CritPoint} is also a sign-changing weak solution of \eqref{Eq:Problem}) has exactly two nodal domains.
\end{proposition}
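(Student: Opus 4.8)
The plan is to prove that the two sets $\Omega_+ := \{w_0 > 0\}$ and $\Omega_- := \{w_0 < 0\}$ are each nodal domains of $w_0$ (so that there are at least two), and then that $w_0$ admits no third nodal domain. The engine of the whole argument is the following observation: \emph{if $B \subseteq \Omega$ is a Borel set with $|B| > 0$ on which $w_0$ has constant sign and with $0 \neq w_0 1_B \in \WHzero$, then $w_0 1_B \in \mathcal{N}$.} Indeed, since $w_0 1_B \in \WHzero$ one has $\nabla (w_0 1_B) = 1_B \nabla w_0$ a.\,e.\ (a standard property of Sobolev functions, using $\nabla w_0 = 0$ a.\,e.\ on $\{w_0=0\}$); testing the identity $\lan \ph'(w_0), \cdot\, \ran = 0$ — available since $w_0$ is a critical point of $\ph$ by Proposition~\ref{Prop:Sing-changing-CritPoint} — with $v = w_0 1_B$ and using $f(x,0)=0$ from Lemma~\ref{Le:Propf}~\ref{Propf:ZeroAtOrigin} gives
\begin{equation*}
	\rho_{\mathcal{H}}\big(\nabla(w_0 1_B)\big) = \int_B \big(|\nabla w_0|^{p(x)} + \mu(x)|\nabla w_0|^{q(x)}\big)\dx = \into f(x,w_0)(w_0 1_B)\dx = \into f(x, w_0 1_B)(w_0 1_B)\dx ,
\end{equation*}
that is $\lan \ph'(w_0 1_B), w_0 1_B\ran = 0$, whence $w_0 1_B \in \mathcal{N}$ and $\ph(w_0 1_B) \geq \inf_{\mathcal{N}}\ph > 0$. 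I will also use repeatedly that whenever $w_0$ is written as a finite sum of functions with pairwise disjoint supports, $\ph$ splits as the sum of its values on the summands (the integrands of the $\rho$-part and of $I_f$ vanish wherever their argument does).

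\textbf{At least two nodal domains.} Both $\Omega_\pm$ satisfy (i) and (ii) in the definition of nodal domain, with $w_0 1_{\Omega_\pm} = \pm w_0^\pm \neq 0$ since $w_0$ changes sign. For minimality of $\Omega_+$, suppose $B \subseteq \Omega_+$ is Borel with $|B| > 0$, $|\Omega_+ \setminus B| > 0$ and satisfying (i), (ii). By the observation above, $w_0 1_B$, $w_0 1_{\Omega_+ \setminus B} = w_0^+ - w_0 1_B$ and $-w_0^-$ all lie in $\mathcal{N}$, so the function $v := w_0 1_B - w_0^-$ (whose summands have the disjoint supports $B \subseteq \Omega_+$ and $\Omega_-$) has $v^+ = w_0 1_B \in \mathcal{N}$ and $-v^- = -w_0^- \in \mathcal{N}$, i.e.\ $v \in \mathcal{N}_0$, hence $\ph(v) \geq \inf_{\mathcal{N}_0}\ph = \ph(w_0)$. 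Splitting $\ph$ along disjoint supports,
\begin{equation*}
	\ph(w_0) = \ph(w_0 1_B) + \ph(w_0 1_{\Omega_+ \setminus B}) + \ph(-w_0^-) = \ph(v) + \ph(w_0 1_{\Omega_+ \setminus B}) \geq \ph(w_0) + \inf_{\mathcal{N}}\ph > \ph(w_0),
\end{equation*}
a contradiction. Thus $\Omega_+$, and by the same reasoning $\Omega_-$, is a nodal domain.

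\textbf{At most two nodal domains.} Suppose $A_1, A_2, A_3$ are pairwise disjoint nodal domains. By pigeonhole two of them carry the same sign of $w_0$; relabelling, and (if that sign is negative) exchanging the roles of $w_0^+$ and $w_0^-$ in what follows, assume $w_0 \geq 0$ on $A_1$ and $A_2$. Set $u_i := w_0 1_{A_i} \in \mathcal{N}$ and $A_0 := \Omega_+ \setminus (A_1 \cup A_2)$, so $w_0^+ = u_1 + u_2 + w_0 1_{A_0}$ with disjoint supports and $w_0 1_{A_0} = w_0^+ - u_1 - u_2 \in \WHzero$. The function $v := u_1 - w_0^-$ lies in $\mathcal{N}_0$ (its parts are $u_1 \in \mathcal{N}$ and $-w_0^- \in \mathcal{N}\setminus\{0\}$), so $\ph(v) = \ph(u_1) + \ph(-w_0^-) \geq \ph(w_0)$, while
\begin{equation*}
	\ph(w_0) = \ph(u_1) + \ph(u_2) + \ph(w_0 1_{A_0}) + \ph(-w_0^-) = \ph(v) + \ph(u_2) + \ph(w_0 1_{A_0}) \geq \ph(w_0) + \ph(u_2) + \ph(w_0 1_{A_0}),
\end{equation*}
forcing $\ph(u_2) + \ph(w_0 1_{A_0}) \leq 0$; this contradicts $\ph(u_2) \geq \inf_{\mathcal{N}}\ph > 0$ together with $\ph(w_0 1_{A_0}) \geq 0$ (it equals $0$ if $|A_0| = 0$, and otherwise $w_0 1_{A_0} \in \mathcal{N}$ by the observation above). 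Finally, two distinct nodal domains are essentially disjoint — if $|A \cap A'| > 0$ then $A \cap A'$ satisfies (i) and (ii) (as $\WHzero$ is closed under $\min$ and $\max$), and minimality of $A$ and $A'$ forces $A = A'$ up to a null set — and every nodal domain is contained, up to a null set, in $\Omega_+$ or in $\Omega_-$; hence $w_0$ has exactly the two nodal domains $\Omega_+$ and $\Omega_-$.

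\textbf{Main obstacle.} The crux is the key observation that restrictions $w_0 1_B$ fall back into $\mathcal{N}$: it is what converts the minimality of $\ph(w_0)$ over $\mathcal{N}_0$ into the claimed rigidity, and it rests on $w_0$ being a genuine critical point (Proposition~\ref{Prop:Sing-changing-CritPoint}, the deepest earlier ingredient), on $f(x,0)=0$, and on the slightly delicate identity $\nabla(w_0 1_B) = 1_B \nabla w_0$, which is legitimate only because membership of $w_0 1_B$ in $\WHzero$ is built into the definition of a nodal domain. Assumption \eqref{Asf:IntMono} enters, as in Gasi\'{n}ski--Winkert \cite{Gasinski-Winkert-2021}, to control the sign of the energy carried by the leftover pieces; in the streamlined form above those pieces are themselves in $\mathcal{N}$, so their energy is automatically bounded below by $\inf_{\mathcal{N}}\ph > 0$.
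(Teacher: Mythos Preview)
Your proof is correct and proceeds along a genuinely different line from the paper's. Both arguments show that $\Omega_\pm$ satisfy (i)--(ii) and then derive a contradiction from a putative nontrivial splitting, but the contradictions differ. The paper takes the leftover piece $v = w_0 1_{A_2}$, observes $\lan\ph'(v),v\ran = 0$, and then expands
\[
\ph(v) - \tfrac{1}{q_+}\lan\ph'(v),v\ran \;\geq\; \Big(\tfrac{1}{p_+}-\tfrac{1}{q_+}\Big)\varrho_{p(\cdot)}(\nabla v) + \into\Big(\tfrac{1}{q_+}f(x,v)v - F(x,v)\Big)\dx \;>\;0,
\]
where the nonnegativity of the $f$-term is precisely assumption \eqref{Asf:IntMono}. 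You instead push the observation one step further: since the leftover piece $w_0 1_{\Omega_+\setminus B}$ (or $u_2$, or $w_0 1_{A_0}$) is itself in $\mathcal{N}$, its energy is bounded below by $\inf_{\mathcal{N}}\ph>0$, and no algebraic manipulation involving \eqref{Asf:IntMono} is needed. Your argument therefore shows that \eqref{Asf:IntMono} is actually dispensable for this proposition --- a small but genuine strengthening. The trade-off is that you lean more heavily on the identity $\nabla(w_0 1_B)=1_B\nabla w_0$ (valid because both $w_0$ and $w_0 1_B$ lie in $\WHzero$, so their difference vanishes, and gradients vanish a.e.\ on level sets), which the paper uses only implicitly via the remark ``the supports of $u^+$, $-u^-$ and $v$ are disjoint''. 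Your separate ``at most two'' paragraph is correct but redundant: once $\Omega_\pm$ are minimal, any nodal domain $A$ with, say, $w_0\geq 0$ on $A$ is essentially contained in $\Omega_+$ (apply minimality of $A$ to $A\cap\Omega_+$) and then essentially equal to $\Omega_+$ (apply minimality of $\Omega_+$ to $A$).
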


\begin{proof}
	We fix any representative of $w_0$ denoted by $\widetilde{w_0}$ and set $\Omega_\pm = \{ x \in \Omega : \pm \widetilde{w_0} (x) > 0 \}$. Both, $\Omega_+$ and $\Omega_-$, satisfy conditions \textnormal{(i)} and \textnormal{(ii)} in the definition above because $w_0 1_{\Omega_\pm} = \pm \widetilde{w_0} ^\pm$ a.\,e.\,in $\Omega$.

	It remains to show that they are minimal. The proof follows arguing by contradiction. Without loss of generality, assume that there exist Borelian subsets  $A_1, A_2$ of $\Omega$ such that they are disjoint, of positive measure, with $\Omega_- = A_1 \dot{\cup} A_2$ and $A_1$ satisfies \textnormal{(i)} and \textnormal{(ii)} in the definition above. As a consequence, $A_2$ also satisfies \textnormal{(i)} and \textnormal{(ii)} because $w_0 1_{A_2} = w_0 1_{\Omega_-} - w_0 1_{A_1} \in \WHzero$ and $w_0 1_{A_2} = \widetilde{w_0} 1_{A_2} < 0$ a.\,e.\,in $A_2$. Hence we have
	\begin{equation*}
		1_{\Omega_+} w_0 \geq 0, \quad 1_{A_1} w_0 \leq 0, \quad 1_{A_2} w_0 \leq 0 \quad \text{ for a.\,a.\,} x \in \Omega
	\end{equation*}
	and
	\begin{equation*}
		1_{\Omega_+} w_0 + 1_{A_1} w_0 + 1_{A_2} w_0
			= 1_{\Omega_+} w_0 + 1_{\Omega_-} w_0 + 1_{\Omega_0} 0
		= w_0 \quad \text{ for a.\,a.\,} x \in \Omega,
	\end{equation*}
	where $\Omega_0 = \{ x \in \Omega : \widetilde{w_0} (x) = 0 \}$.

	Take $u = 1_{\Omega_+} w_0 + 1_{A_1} w_0$ and $v = 1_{A_2} w_0$ and note that $u ^+= 1_{\Omega_+} w_0$ and $- u ^- = 1_{A_1} w_0$. As $\ph'(w_0) = 0$ and as the supports of $u^+ , - u^-$ and $v$ are disjoint, it holds
	\begin{equation*}
		0 = \lan \ph'(w_0) , u^+ \ran
		= \lan \ph'(u^+) , u^+ \ran,
	\end{equation*}
	thus $u^+ \in \mathcal{N}$. Analogously, $- u^- \in \mathcal{N}$ and hence $u \in \mathcal{N}_0$. With the same argument it also holds that $\lan \ph'(v) , v \ran = 0$.

	Finally, by these properties and condition \eqref{Asf:IntMono}, we get
	\begin{align*}
		\inf_{u \in \mathcal{N}_0} \ph(u)
		 & = \ph (w_0)
		= \ph(u) + \ph(v) - \frac{1}{q_+} \lan \ph'(v) , v \ran                                                                                                  \\
		 & \geq \ph(u) + \left( \frac{1}{p_+} - \frac{1}{q_+} \right) \varrho_{p(\cdot)} ( \nabla v ) + \into \left( \frac{1}{q_+} f(x,v) v - F(x,v) \right) \dx \\
		 & \geq \ph(u) + \left( \frac{1}{p_+} - \frac{1}{q_+} \right) \varrho_{p(\cdot)} ( \nabla v )                                                            \\
		 & \geq \inf_{u \in \mathcal{N}_0} \ph(u) + \left( \frac{1}{p_+} - \frac{1}{q_+} \right) \varrho_{p(\cdot)} ( \nabla v ),
	\end{align*}
	which is a contradiction because $p_+ < q_+$ and $v \neq 0$. This finishes the proof.
\end{proof}

Finally, combining Theorem \ref{Th:ThreeSolutions} and Proposition \ref{Prop:NodalDomains}, we have the following result.

\begin{theorem}
	Let \eqref{H1} be satisfied and $f$ fulfill \eqref{Hf}. Then there exist nontrivial weak solutions of problem \eqref{Eq:Problem} $u_0,v_0,w_0 \in \WHzero \cap \Lp{\infty}$ such that
	\begin{equation*}
		u_0 \geq 0, \quad v_0 \leq 0, \quad w_0 \text{ being sing-changing with two nodal domains.}
	\end{equation*}
\end{theorem}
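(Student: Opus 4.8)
The plan is to simply synthesize the two preceding main results, since the statement is a corollary of Theorem \ref{Th:ThreeSolutions} and Proposition \ref{Prop:NodalDomains}. First I would invoke Theorem \ref{Th:ThreeSolutions}: since $f$ satisfies \eqref{Hf}, it in particular fulfills \eqref{Asf:Caratheodory}, \eqref{Asf:WellDef}, \eqref{Asf:GrowthInfty}, \eqref{Asf:GrowthZero}, \eqref{Asf:CeramiAssumption} and \eqref{Asf:QuotientMono}, so that theorem provides nontrivial weak solutions $u_0, v_0, w_0 \in \WHzero \cap \Lp{\infty}$ of problem \eqref{Eq:Problem} with $u_0 \geq 0$, $v_0 \leq 0$ a.\,e.\,in $\Omega$ and $w_0$ sign-changing. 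I would recall that this $w_0$ is, by construction, a minimizer of $\ph_{|_{\mathcal{N}_0}}$ (Proposition \ref{Prop:MinimizerExistence}) which is then shown to be a critical point of $\ph$ (Proposition \ref{Prop:Sing-changing-CritPoint}), hence a weak solution.

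Second, I would apply Proposition \ref{Prop:NodalDomains}, whose hypotheses are precisely \eqref{H1} and \eqref{Hf}; note that the assumption bundled in \eqref{Hf} that was not yet needed, namely \eqref{Asf:IntMono}, is exactly the extra ingredient for that proposition. Its conclusion is that \emph{any} minimizer of $\ph_{|_{\mathcal{N}_0}}$ has exactly two nodal domains, and since the $w_0$ delivered above is such a minimizer, it has exactly two nodal domains. Thus $u_0, v_0, w_0$ are the claimed solutions and the proof is complete.

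I do not expect any genuine obstacle here: the only thing to verify is that the hypothesis lists of Theorem \ref{Th:ThreeSolutions} and of Proposition \ref{Prop:NodalDomains} are both subsumed by \eqref{H1} together with \eqref{Hf}, which is immediate from the fact that \eqref{Hf} collects \eqref{Asf:Caratheodory} through \eqref{Asf:IntMono}. Everything else — existence, regularity via Theorem \ref{Th:BoundedSolutions}, the sign information, and the count of nodal domains — has already been established in the cited statements.
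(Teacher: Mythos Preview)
Your proposal is correct and matches the paper's own approach exactly: the paper simply states that the theorem follows by combining Theorem \ref{Th:ThreeSolutions} and Proposition \ref{Prop:NodalDomains}, and your write-up supplies precisely that synthesis, including the observation that the $w_0$ from Theorem \ref{Th:ThreeSolutions} is the minimizer of $\ph_{|_{\mathcal{N}_0}}$ to which Proposition \ref{Prop:NodalDomains} applies.
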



\subsection*{Funding information:}

\'{A}ngel Crespo-Blanco was funded by the Deutsche Forschungsgemeinschaft (DFG, German Research Foundation) under Germany's Excellence Strategy - 	The Berlin Mathematics Research Center MATH+ and the Berlin Mathematical School (BMS) (EXC-2046/1, project ID: 390685689).

%
%


\end{document}